\numberwithin{equation}{section}
\DeclareMathOperator{\prob}{{\mathbb{P}}}
\DeclareMathOperator{\var}{{\text{Var}}}
\DeclareMathOperator{\expec}{{\mathbb{E}}}
\DeclareMathOperator{\e}{{e}}
\newcommand{\Q}{Q}
\newcommand{\M}{M}
\newcommand{\Y}{Y_{y,c,\lambda,1}}
\newcommand{\Yl}[1]{Y_{#1,\nu\expec[X]/K,\lambda,1}}
\newcommand\1{\mathbbm{1}}
\newtheorem{theorem}{Theorem}[section]
\newtheorem{proposition}[theorem]{Proposition}
\newtheorem{lemma}[theorem]{Lemma}
\newtheorem{remark}[theorem]{Remark}
\definecolor{halfgray}{gray}{0.55} 
\definecolor{purplish}{rgb}{0.41, 0.41, 0.64}
\definecolor{navy}{rgb}{0,0,0.52}
\definecolor{webbrown}{rgb}{.6,0,0}
\definecolor{Maroon}{cmyk}{0, 0.87, 0.68, 0.32}
\definecolor{NiceRed}{rgb}{0.41,0, 0}
\definecolor{Black}{cmyk}{0, 0, 0, 0}
\definecolor{Peacefulbackground}{rgb}{0.933333,0.9098,0.9098}
\definecolor{Poop}{RGB}{165,82,35}
\definecolor{darkgreen}{RGB}{50,205,50}
\title{A Holistic Approach for Bitcoin Confirmation Times \& Optimal Fee Selection }
\author{Rowel C. Gündlach \and Ivo V. Stoepker \and Stella Kapodistria \and Jacques A. C. Resing}
\date{\today}
\begin{document}

\maketitle
\begin{abstract}

Bitcoin is currently subject to a significant pay-for-speed trade-off. This is caused by lengthy and highly variable transaction confirmation times, especially during times of congestion.  Users can reduce their transaction confirmation times by increasing their transaction fee.
In this paper,  based on the inner workings of Bitcoin, we propose a model-based approach (based on the Cram\'er-Lundberg model) that can be used to determine the optimal fee, via, for example, the mean or quantiles, and models accurately the confirmation time distribution for a given fee. 
The proposed model is highly suitable as it arises as the limiting model for the mempool process (that tracks the unconfirmed transactions), which we rigorously show via a fluid limit and we extend this to the diffusion limit (an approximation of the Cram\'er-Lundberg model for fast computations in highly congested instances). 
We also  propose methods (incorporating the real-time data) to estimate the model parameters, thereby combining model and data-driven approaches. 
The model-based approach is validated on real-world data and the resulting transaction fees  outperform, in most instances, the data-driven ones. 
\end{abstract}

\tableofcontents

\section{Introduction}
When transferring money, we are accustomed to  quick transactions with clear pre-specified rules set by central third parties. However, with cryptocurrencies that is not the case. For example, in Bitcoin (the most popular example of a cryptocurrency), upon placing a transaction, there is a random amount of transaction delay on which the user has limited view and control. This is especially hindering at times when there is a peak in transactions demand.  E.g., at the end of 2017, due to a surge in popularity, users experienced long delays in their transactions. Even during times when the system is less busy, one needs to take a significant amount of delay into account.

A key performance metric for the transaction throughput is the so-called  \textit{confirmation time} (also referred to as transaction confirmation time or confirmation delay). The confirmation time can be viewed (in a stylised framework) as the time between sending a transaction (into the mempool: the pool of pending transactions that are waiting to be included in a block) and the moment the transaction is included in a block on the blockchain\footnote{In practice,  when making a Bitcoin transaction, recipients usually require somewhere between 2 and 6 confirmations to consider the transaction as valid. However, we focus here on the  first confirmation, as once the transaction is included in a Bitcoin block and thus obtains the first confirmation, the additional confirmations will follow and it will need a further  10 minutes  (on average) for each additional confirmation. In the Bitcoin literature, the term \lq confirmation time\rq, is an ambiguous term. We refer to Remark \ref{rem:cnofirmationtime_def} for a discussion.}.

Users have some degree of control on their transactions' confirmation time by selecting a fee they can attach to a transaction akin to classical pay-for-speed settings. Namely, the user can effectively decrease the confirmation time by including a higher fee to the transaction (for the miner, i.e. the user that decides which transactions are put in a block). In the analysis that follows, we fix the fee\footnote{Technically the \textit{fee density}, i.e. the fee per byte ratio determines how profitable a transaction is for a miner. For convenience we simply refer to this as the fee.} and analyse, given the fee, the (stochastic) properties of the confirmation time. 

Considering a target transaction with a certain fee, then its confirmation time can be determined via the so called mempool. 
The mempool keeps track of all the transactions (number of transactions, their weight  and fee per transaction) that have not been confirmed yet and can be restricted to specifically those transactions that have a higher fee than the target transaction. We assume that such transactions arrive with an almost constant high rate and are small in weight with respect to the size of a block. As a result the arrival process of transactions seems almost linear. Therefore, the cumulative weight of transactions with a higher fee than the target transaction is modelled by a process that 
\begin{itemize}
    \item starts at a certain height corresponding to the cumulative weight of transactions that have a higher fee;
    \item  
increases almost linearly due to new transactions arriving with higher fee;
\item jumps down when a block is found, thereby confirming the transactions with the highest fee,
\end{itemize} c.f. Figure \ref{fig:mempool_snap}. 
This process bears a close resemblance to the Cram\'er-Lundberg  (CL) model \cite[Chapter IV]{asmussen2020}, see Section \ref{sec:model_theory} for details.
The target transaction is confirmed when this process crosses 0 for the first time, i.e. a block is found and the cumulative weight of transactions with a higher fee is smaller than the block size.
Therefore, the confirmation time is modelled as the time to ruin in the CL model (first time to hit 0). 

\begin{figure}
    \centering
    \includegraphics[scale=0.25]{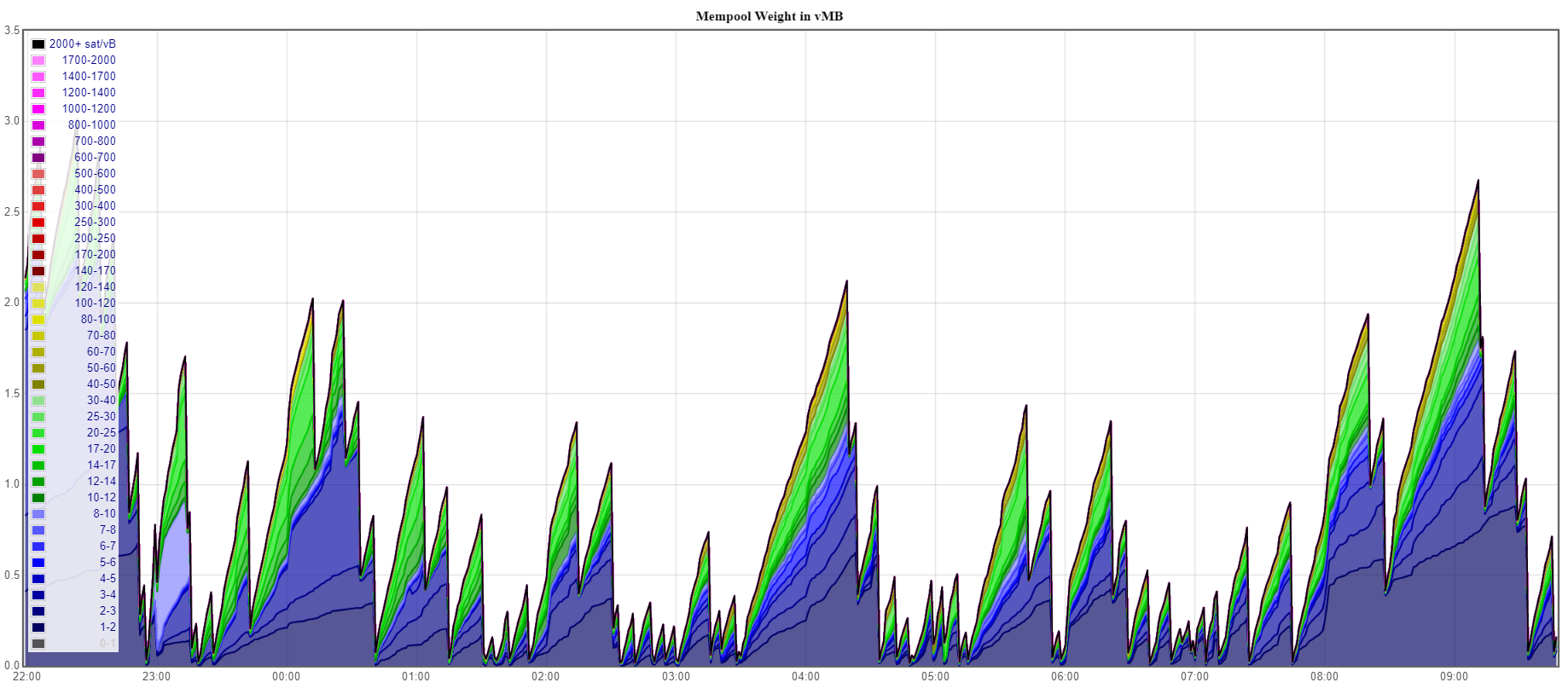}
    \includegraphics[scale=0.35]{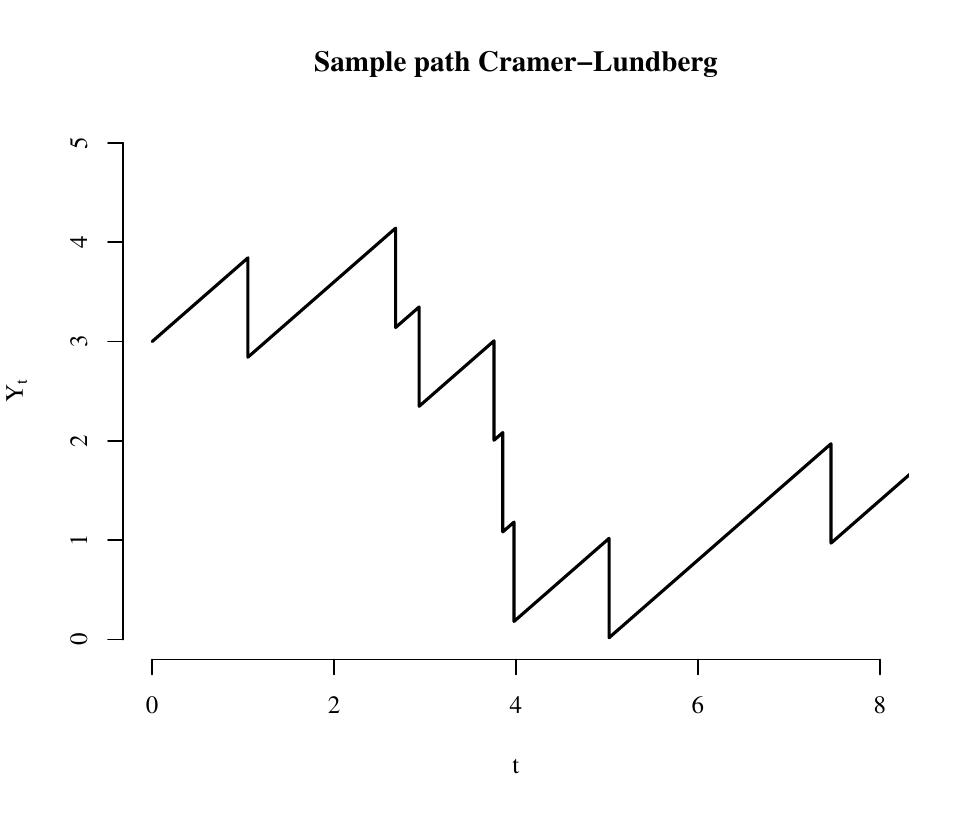}
    \caption{
    Comparison of a real-world realisation of the mempool versus a CL model.
    On the left, a snapshot of the mempool taken on 11-4-2020 from 00:00 am to 8:00 am. The cumulative mempool weight (vMB) is outlined on the $y$-axis over time on the $x$-axis. Source: \url{https://mempool.jhoenicke.de}.
    On the right, a sample path of the CL model. }
    \label{fig:mempool_snap}
\end{figure}

In what follows, we propose a holistic overview of this model's position in the current Bitcoin confirmation time literature, historical data and its practical application. We divide the contributions of this work in these four parts:
\begin{description}
\item[\textbf{Relation of the model to the established literature.}]
We provide a general model description of established models in the literature, most of which are special cases of the \textit{batch-service queue} (BSQ), see Section \ref{sec:queueing_model} for a formal definition. We show that under an appropriate scaling (that we argue is realistic in practice) the established models have a CL fluid limit and a Brownian motion (BM) diffusion limit when the system is in a critical region. The CL model and BM can therefore be seen as overarching mathematical models for transaction confirmation times.

\item[\textbf{Model-based decision making.}]
As the CL model and the BM have been extensively studied in the literature, we show that  established results can be tailored towards determining the optimal fee in an elegant and simple manner.
For a given fee, we determine the confirmation time density  and its mean through a simple recursive scheme, see Section 2. Users can make an optimal fee decision based on these metrics. For the fluid limit, these results extend to both measurements in time units  (continuous) and in number of blocks (discrete).

While the mean and distribution of the confirmation time under a BM has already been studied in detail in \cite{Kooops2018} and \cite{Gundlach2021}, our work rigorously bridges the aforementioned results to established models from the literature via the diffusion limit.

\item[\textbf{Validation study.}]
We formally validate the choice of the CL model using real time data and demonstrate the practical application of the proposed mathematical model. We discuss in detail how to validate the model and the results, and show that the model can be used in practice to estimate confirmation times, as well as for deciding on the optimal fee.
As this validation study covers a data set that extends over a long period of time (two years), we also demonstrate its robustness.

\item[\textbf{Practical application and prediction.}]
We provide methods to obtain the model parameters from real-world data. We then use the proposed model to derive the optimal fee for a target confirmation time. The results are compared to those obtained by data-driven methods and we demonstrate that the proposed approach brings significant benefits to the customers allowing them to more accurately choose the right fee.

\end{description}

\noindent
\textit{Organisation of this section.}
In what follows, we provide some details on the aforementioned main contributions. In Section \ref{sec:relation to bc lit} we discuss how the model fits in the current literature and how it relates to the CL model. Moreover, we briefly discuss some of the benefits of having the CL both from a modelling and practical perspective. Then in Section \ref{subsec:1modelval} we provide more details on how we validate the model with real-world data. In Section \ref{subsec:1modelperf}, we discuss how the model can be used in practice and how it performs in determining the optimal fee. We then discuss methods and applications beyond stochastic modelling of Bitcoin confirmation times in Section \ref{sec:beyond stoch model of bc ct}. Section \ref{sec:paper_overview} concludes with an overview of the rest of the paper.

\subsection{Relation and benefits of the model to the established literature}
\label{sec:relation to bc lit}

The majority of stochastic models in Bitcoin confirmation times are based on queueing models, see \cite{Kasahara2021} and the references therein. In such models, transactions arrive according to a Poisson process and are confirmed in \textit{batches}, meaning that after a certain random amount of time the first (at most) $K$ transactions are confirmed, regardless of the number of transactions in the queue. This is an example of a \textit{batch-service queue} (BSQ), where customers represent transactions, and a batch-service coincides with the mining of a block with many transactions being confirmed at once. For more details on the BSQ see for example \cite[Chapter 4]{Chaudhry1983}. 

To the best of our knowledge,
\cite{Kasahara2019} were the first to propose a link between the bitcoin mempool and the BSQ. To cater to different fee levels,
the authors take different priority classes into account and derive an expression for the expected confirmation time per class.
The BSQ (without priorities), is also discussed in \cite{Kawase2017, Kawase2020}. Here, the batch confirmations are formulated using the so-called gated service discipline, i.e. upon arrival a transaction cannot be included in the next block. For this model, the expected confirmation time is derived. 
\cite{Li2018} also considers a BSQ and incorporates the effect of propagation delay on top of the time between blocks that corresponds to a service time.   This work is later extended in \cite{Li2019}, where the transaction arrival process, service process and service discipline are generalised. This work was further extended in \cite{Malakhov2023}, where the authors go beyond the steady-state solution and find the expected delay in a transient setting with initial backlog.

However, these models have some pressing shortcomings. 
Due to the complexity of the BSQ model, analytic results are only available for the mean confirmation time and are given as involved expressions that are difficult to numerically evaluate.
Moreover, it is not clear whether such models realistically capture the real-world application. 
Finally, and most commonly in the BSQ literature, the model keeps track of the number of transactions and further assumes that the block size is deterministic. However, as every transaction has a different weight and blocks have a fixed maximum capacity of 1MB, these two modelling choices (number of transactions and deterministic block sizes) do not capture the full reality of the transaction confirmation process.
In the following we show that the model proposed in this paper can overcome all aforementioned shortcomings.

Contrary to the assumptions in the BSQ literature, in reality, the block (or batch) size  is not expressed in the number of transactions (or customers) but rather with its weight (or workload) of 1MB. Therefore, one needs to adopt one of the following points of view:
\begin{enumerate}
    \item Keeping track of the number of transactions in the system. Then every arrival brings 1 transaction, and the block size (expressed in the number of transactions) is random. 
    Under this modelling choice, it is important to note that these random block sizes depend on the arrival process. We refer to this as the \textit{transaction count process}. 
    \item Keeping track of the transaction weights in the system. Then every arrival brings a random amount of weight (or workload) to the system. These weights follow a continuous distribution, which implies that a block can never be exactly filled. This leads to random block sizes that are also dependent on the arrival process. We refer to this as the \textit{cumulative weight process}. 
\end{enumerate}

Despite these two seemingly different modelling choices, it is remarkable that in the limiting regime, i.e. when we take the limit of the mean number of arrivals and the block sizes proportionally to go to infinity, it turns out that under mild conditions, the Bitcoin mempool as a BSQ under either point of view converges to the same (fluid) limiting process, which is the CL model. This connects the BSQ and CL model in a natural way, where the CL model can be used as an alternative to overcome the modelling dependency issues under the BSQ.

Real-world data shows that the mempool is consistent with the dynamics of the (fluid) limiting process and it behaves according to a CL model, see for example Figure \ref{fig:mempool_snap}. The main contributing reasons are summarised as follows:
Firstly, data shows that around 5 transactions arrive in the mempool per second.
which corresponds to around $3\cdot 10^3$ transactions per 10 minutes. In contrast, block arrivals happen roughly every 10 minutes.
Secondly,
block sizes are fixed at $10^6$B, whilst a single transaction  is (generally) significantly smaller (200-300B). So taking the limit of the arrival rate and the block sizes to infinity provides a suitable approximation, making the BSQ fluid limit an appropriate model to use for practical applications. This is in line with the so called snap-shot principle mentioned in \cite{DAuria2005}, a consequence of scaling the space more heavily than scaling time.

The application of the CL model for Bitcoin transactions has been discussed in the literature before. 
\cite{Kooops2018} was the first to 
link the mempool to a CL model.
However, this analysis was limited to an approximation by its heavy-traffic counterpart resulting in the confirmation time being approximated with an inverse Gaussian distribution. This mathematical framework was later refined both theoretically in \cite{Gundlach2021}, and practically via a robustness study in \cite{Stoepker2021} where it was shown that confirmation times are robust to small deviations of the assumptions of the CL model. 
 
We provide also the necessary bridge between the BSQ and BM as follows. We jointly scale the rate of incoming transactions and the block size of the BSQ to infinity, such that the resulting limiting process is in a heavy-traffic regime.
Under this scaling we find that the BSQ converges to the BM.
This links the aforementioned work of \cite{Kooops2018,Gundlach2021} directly to the BSQ, instead of via the CL
and therefore, simplistic expressions for the confirmation times can be used.

However, the heavy-traffic approximation is only suitable in practice when the rate of incoming transactions is sufficiently high. While the approximation works quite well even outside of the heavy-traffic regime, c.f. Section \ref{sec:results}, in light-traffic cases one needs to resort to deriving the confirmation time distribution of the CL which is not straightforward. We present an elegant solution to this in Section \ref{sec:model_theory} by introducing simplified recursive expressions that determine the confirmation time density and the mean confirmation time, both measured in time (units) and number of blocks.

\subsection{Validation study}
\label{subsec:1modelval}
As briefly mentioned above, one of the inspirations for modelling the mempool data with the CL model comes  from the visual similarities of a data path of the mempool to a sample path of the CL model. See Figure \ref{fig:mempool_snap} for a snapshot of the mempool data and notice its similarity to the CL model.
The colours capture the different fee levels $\phi$.
While informal, this visual similarity indicates that the mempool roughly satisfies the same properties as the proposed CL model.

In this work, we provide a formal comparison, 
by deriving the distribution of the confirmation time based on the CL model and show that the data fits the distribution.
The results of this case study show promise, see for example Figure \ref{fig:Densities_4x1}, where  we provide an insight in the confirmation time distribution of the theoretical model and overlay the empirical distribution of the collected data.  
Details and further results on, for example, a comparison of the mean confirmation times are provided in Section \ref{sec:case_study}.

\begin{figure}[h!]
    \includegraphics[scale=0.8]{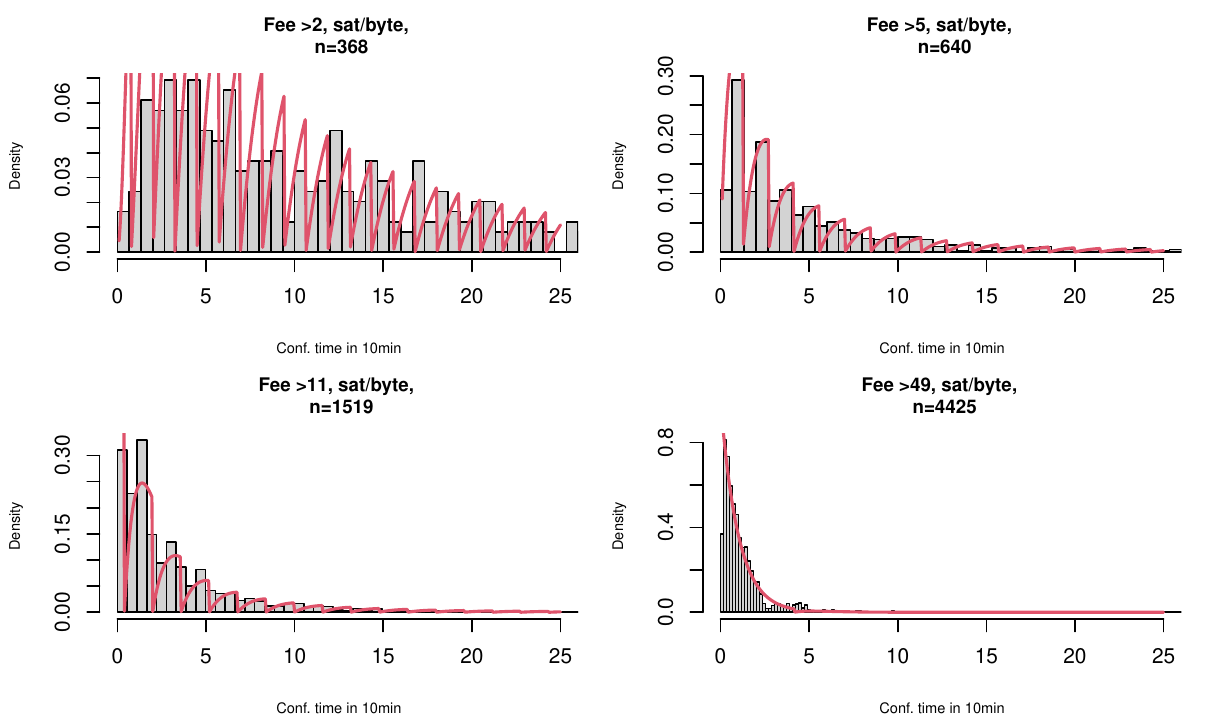}
    \caption{Histograms of $n$ extracted confirmation times for  $\phi\in\{3,6,12,50\}$ sat/byte (from left to right). We overlay (plotted in red) the density 
    of the  confirmation times
    to the theoretical distribution in Equation \eqref{eq:densityut}.}
    \label{fig:Densities_4x1}
\end{figure}

\subsection{Practical application and prediction}
\label{subsec:1modelperf}
Although the CL model seems to fit the mempool data exceptionally well, it comes with a caveat. While Section \ref{sec:model_theory} presents a recursive scheme on how one can compute the distribution or the mean of the confirmation times, such a scheme may still
require significant computational power. 
To overcome this challenge, we turn to the BM,
which can be obtained as the heavy-traffic limit of the CL model,  c.f. Remark \ref{rem:diffusionapprox}. We show that the BM in such instances is still a very accurate model and it offers a simple closed-form expression for the distribution and the mean of the confirmation times. This is no surprise as, in the heavy-traffic regime, the time to ruin of the CL model converges to the time it takes for the BM to hit 0, see Theorem \ref{thm:diffusion_limit}. The distribution of the time it takes the BM to hit 0 is well known and it is distributed according to the inverse Gaussian (IG) distribution, which is fast to evaluate. 

Table \ref{tab:E(T)results} in Section \ref{sec:results} indicates that the BM approximation is close to the CL model in terms of the mean confirmation time. 
In Table \ref{tab:CS1} in Section \ref{sec:case_study},  we provide an overview of  mean empirical confirmation times obtained from the data that are closely approximated by the mean time for a BM to hit 0.

An additional benefit of the BM approximation, is that it allows for optimal fee estimation based on realised transaction confirmation times, which is less data intensive than mempool data. In Section \ref{sec:cs_performance}, we use the BM approximation to determine, for a given target confirmation time, the optimal fee. In 46\% of the cases, the predicted fee is indeed the optimal fee. In contrast, the purely data-driven model is accurate only in 9\% of the cases.

\subsection{Beyond the stochastic modelling of Bitcoin confirmation times}
\label{sec:beyond stoch model of bc ct}
We finally step outside the field of the stochastic models used to model Bitcoin confirmation times, to discuss related work and applications. We first discuss some other methods such as statistics and machine learning methods that were proposed to determine the optimal fee. Then, we highlight different fields of applications where the model can also be applied.

The literature on Bitcoin confirmation times extends beyond the BSQ and CL model, which we illustrate as follows. 
Other stochastic models that have been explored are a game theoretical queueing model \cite{Li2020} or a simulation based model \cite{Memon2019}. 
From a related field, many data-driven methods have been introduced over the years. While some combine these methods with a mathematical model
\cite{Ricci2019}, others rely solely on machine learning tactics. 
\cite{Zhang2022} analyses the confirmation time falling into a specific time interval, thereby translating the problem to a classification problem. They then use multiple established machine learning techniques to predict the confirmation time.
\cite{Tedeschi2019} uses different neural networks to predict if a transaction will be included in the next block. 
\cite{abdullah2018} and \cite{Singh2020} discuss supervised learning algorithms for confirmation times in Bitcoin and  Ethereum respectively. \cite{Gebraselase2021} performs an elaborate time series analysis for exploration and prediction of various properties of Bitcoin, one of which is the confirmation time.

While the paper is tailored to Bitcoin confirmation times, its applications reach much further. It was noticed before in \cite{Shang2023} that the fundamental problem of paying a transaction fee for faster transaction confirmation is related to operations management pricing. More specifically, the model portrays a modern application of \textit{pay-for-speed} -  the more you pay the shorter the expected service time. Such protocols are omnipresent in our society. Notable instances are the costlier same-day or express delivery of delivery companies, fast-passes at amusement parks or priority boarding of planes. 
However, in these examples the price options are set and cannot be chosen by the user. This work therefore extends to \textit{pay-what-you-want} - the more you \textit{decide} to pay the shorter the expected service time. Such applications are more scarce in society, but are on the rise with companies as DoorDash and Instacart, a fast-food and grocery delivery service respectively. The delivery personnel sees orders of customers, which they can decide to fulfil for some fixed reward plus an optional tip customers can add on their order. It goes without saying that personnel chooses orders with the highest tip-to-fulfilment ratio first. 
We refer to \cite{Shang2023} for other examples  of pay-what-you-want applications in practice.
In such examples there is similar interest to pay the lowest fee or cost possible to receive a good or service within a certain time frame for which the model can be used.

A key contribution the paper provides is the fluid and diffusion limit of the batch service queue with many arrivals and a large batch size. The model therefore extends also to different applications of the BSQ, see for example \cite{Deb1973,Sasikala2016}. This application specifically extends to batch service queues where the maximum batch size is not measured in number of customers, but measured in some different quantity. This is especially relevant when goods cannot be broken down. Relevant applications extend to vertical transport systems (where complex machinery as forklifts, cranes or elevators is capable of transporting a maximal weight rather than a maximal number of elements), packing of transport vehicles (where delivery trucks are limited by the volume of storage capacity or weight) or road traffic (where a green light serves cars based on their speed and size).

\subsection{Paper overview}
\label{sec:paper_overview}
The rest of this paper is organised as follows: We introduce the CL model, the primary model of this work, and the needed notation in Section \ref{sec:model_theory}. In this section, we also provide a short overview of the theoretical results for the confirmation time distribution and mean. We then contrast the CL model with the BSQ that we introduce in Section \ref{sec:queueing_model}. We also formalise the relation with the BSQ with its fluid and diffusion limit. We then provide detailed proofs of these claims:
Section \ref{sec:math_analysis} contains the proofs of the results from Section \ref{sec:model_theory} and Section \ref{sec:fluidlimit proof} contains the proofs of the results from Section \ref{sec:queueing_model}.
We present and briefly discuss some numerical results in Section \ref{sec:results}. In Section \ref{sec:case_study}, we present a case study, where we demonstrate how well the proposed model fits to real-world Bitcoin data  and how well it performs in comparison to purely data-driven methods. We conclude the paper in Section \ref{sec:conclusion}.

\section{The Cram\'er-Lundberg (CL) model}
\label{sec:model_theory}
In what follows, we consider  a transaction with a fixed fee  $\phi$. We assume that, when the cumulative  weight of transactions in the mempool with higher fee than $\phi$ is smaller than the block size, then if a block is found, the transaction with fee level $\phi$ is included in the found block (and thereby confirmed). 
This corresponds with the assumption that block miners are rational and confirm transactions with the highest fee first.
Therefore, when modelling the cumulative mempool weight, we only include transactions with higher fee than $\phi$.

The cumulative mempool weight is modelled by a stochastic process $Y_{y_\phi,c_\phi,\lambda,B}$, that starts with some initial load $y_\phi$. The initial load represents the initial transactions of higher fee than $\phi$ that were already in the mempool at time $t=0$. Over time, the mempool grows linearly  with slope $c_\phi$. The linear increase represents incoming transactions of higher fee than $\phi$. At random instances, a block is mined. This is modelled with a Poisson process $Z_\phi(t)$ with rate $\lambda$. At these instances (corresponding to the mining of a block), a jump down occurs, where the jump size is a random variable $B$ that corresponds to the size of a block that is mined. For $t\geq 0$ and the i.i.d. sequence $(B_i)_{i\geq 1}$ that has the same distribution as $B$, we consider the stochastic process:
\begin{equation}\label{eq:def-Y(t)}
   Y_{y_\phi,c_\phi,\lambda,B}(t):=  y_\phi + c_\phi t -\sum_{i=1}^{Z_\phi(t)} B_i.
\end{equation}
Block sizes are taken fixed (deterministic) and scaled to size 1 (in Section \ref{sec:queueing_model} we support this modelling choice). Thus, $F_B(x)=\1\{x\geq 1\}$, where $\1\{\cdot\}$ denotes an indicator function.
We furthermore scale time such that the arrival rate of mined blocks equals 1, i.e.  $\lambda=1$. 
This  is a special case of the so-called Cram\'er-Lundberg (CL) model. The CL model is fundamental to actuarial sciences, see  \cite[Chapter IV]{asmussen2020} for more details. 

In what follows, we assume $\phi$ to be fixed, and therefore suppress its presence in the notation for convenience.
Additionally, when it is absolutely clear from the context, we also suppress the subscripts and simply write  $Y(t)$ instead of $Y_{y_\phi,c_\phi,\lambda,B}(t)$.

As a key consequence,
\textit{the confirmation time} of a Bitcoin transaction is modelled  by the time required for the stochastic process to hit 0 for the first  time, (in the actuarial  science jargon, this  is the  so-called time to ruin, $T_{y,c,\lambda,B}$), i.e.
\begin{equation}\label{eq:def-confirmation}
    T_{y,c,\lambda,B} = \inf\{t>0: Y_{y,c,\lambda,B}(t) < 0 \}.
\end{equation}
For similar reasons as above, we suppress the subscripts $c$, $\lambda$ and $B$ in $T_{y,c,\lambda,B}$ when these values are clear from the context.

We derive the confirmation time distribution in a generic mathematical framework. In this analysis we take into account that the term \lq confirmation time\rq, may be ambiguously used in bitcoin literature and that the analysis can be adjusted to different definitions of \lq confirmation time\rq, that are often used. Remark \ref{rem:cnofirmationtime_def} describes this generalisation in more detail.

\begin{remark}[The definition of a confirmation time]
\label{rem:cnofirmationtime_def}
Users sometimes consider a transaction confirmed only when a certain number, say $z$, of blocks are found after the block in which the transaction of interest is included. This has to do with a possible \textit{double-spend attack}, see \cite{Pinzon2016}. 
Note that we have opted for simplicity to define the  confirmation time with $z=0$. Under the current modelling assumptions the case for $z>0$ is given by
$T_y+E_1+\ldots+E_z$, where $T_y$ is given in 
Equation \eqref{eq:def-confirmation} and $\{E_i\}_{i=1}^z$ are i.i.d. exponentially distributed times with parameter 1. All results presented in the paper can be extended in a straightforward manner to cover the case $z>0$.
\end{remark}

The density function of the confirmation time is derived in the following proposition: 
\begin{proposition}[Density of  confirmation time]
\label{col:Conftimedens}
Consider the process $\{Y(t)\}_{t\geq0}$ as defined in Equation \eqref{eq:def-Y(t)} with initial level $Y(0)=y$. 
Let $T_y$ be the  confirmation time defined in Equation  \eqref{eq:def-confirmation}. 
Then its  density function $f_{T_y}(t)$, $y>0$, is given by
\begin{equation}
    \label{eq:densityut}
    \begin{aligned}
    f_{T_y}&(t) =
    \1\{y+ct <1\}\e^{-t}\\& +
    \1\{y+ct \geq 1\}\left[ \e^{-t} \frac{ t^{\lfloor y+ct\rfloor}}{\lfloor y+ct\rfloor!}-
    \sum_{i=\lceil y \rceil}^{\lfloor y+ct\rfloor}
   f_{T_0}\left(t-\frac{i-y}{c}\right) \e^{-\frac{i-y}{c}} \frac{ \left(\frac{i-y}{c}\right)^i}{i!}\right],
    \end{aligned}
\end{equation}
where
\begin{equation}
	\label{eq:densityt0}
    f_{T_0}(t) = \1\{ct < 1\} \e^{-t}+
	\1\{ct \geq 1\} \frac{ct-\lfloor ct\rfloor}{ct}\e^{-t}
	\frac{ t^{\lfloor ct \rfloor}}{\lfloor ct\rfloor! }.
	\end{equation}
	\label{cor:T(y,c)density}
\end{proposition}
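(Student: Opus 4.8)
\noindent The plan is to exploit that the block sizes are deterministic and equal to $1$: then $\sum_{i=1}^{Z(t)}B_i=Z(t)$, so $Y(t)=y+ct-Z(t)$ with $Z$ a rate-$1$ Poisson process, whose jump times I write $\tau_1<\tau_2<\cdots$. Since $Y$ increases at rate $c>0$ between jumps and drops by exactly $1$ at each $\tau_k$, its running infimum is attained right after jumps; hence ruin occurs at a jump time, and $T_y=\tau_n$ implies $Y(\tau_n)=y+c\tau_n-n<0$ and $Y(\tau_k)=y+c\tau_k-k\ge 0$ for all $k<n$. Writing $s_k:=(k-y)/c$ for the instant at which the line $s\mapsto y+cs$ reaches height $k$, the ``safety'' condition at jump $k$ reads $\tau_k\ge s_k$. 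Comparing $n$ with $y+c\tau_n$ and $n-1$ with $y+c\tau_{n-1}$ shows that, off a Lebesgue-null set of $t$, ruin at exactly time $t$ forces $n=\lfloor y+ct\rfloor+1$; i.e.\ the $(m+1)$-st jump (with $m:=\lfloor y+ct\rfloor$) occurs at $t$ and the first $m$ jumps are safe. As the $(m+1)$-st jump time of a rate-$1$ Poisson process is $\Gamma(m+1,1)$-distributed and, conditioned on its value being $t$, the first $m$ jump times are the order statistics of $m$ i.i.d.\ $\mathrm{Uniform}(0,t)$ variables, this yields
\begin{equation}\label{eq:plan-factor}
    f_{T_y}(t)=\frac{t^{m}\e^{-t}}{m!}\,Q_y(t),\qquad
    Q_y(t):=P\!\left(U_{(k)}\ge s_k\ \text{ for all }k=1,\dots,m\right),
\end{equation}
where $U_{(1)}<\cdots<U_{(m)}$ are the order statistics of $m$ i.i.d.\ $\mathrm{Uniform}(0,t)$ variables; equivalently $Q_y(t)$ is the probability that the empirical counting process of the $m$ points stays below $s\mapsto y+cs$ on $(0,t]$. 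The regime $\1\{y+ct<1\}$ ($m=0$, no earlier jumps, $Q_y\equiv1$) immediately gives the first term of \eqref{eq:densityut} and the $\1\{ct<1\}$ term of \eqref{eq:densityt0}.

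\noindent\emph{The case $y=0$.} Now the barrier is the line $s\mapsto cs$ through the origin, and $Q_0(t)$ is exactly the quantity handled by the classical continuous ballot theorem for processes with cyclically exchangeable increments: rescaling time by $t$ and space by $ct$, one gets $Q_0(t)=\bigl(1-m_0/(ct)\bigr)^+$ with $m_0=\lfloor ct\rfloor$, and this is positive because $m_0<ct$ (the boundary $ct$ integer being a null set). Substituting $Q_0(t)=(ct-\lfloor ct\rfloor)/(ct)$ into \eqref{eq:plan-factor} gives \eqref{eq:densityt0}.

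\noindent\emph{The case $y>0$.} Here the goal is the identity
\begin{equation}\label{eq:plan-main}
    \frac{t^{m}\e^{-t}}{m!}\bigl(1-Q_y(t)\bigr)
    =\sum_{i=\lceil y\rceil}^{\lfloor y+ct\rfloor}
      f_{T_0}\!\Bigl(t-\tfrac{i-y}{c}\Bigr)\,\e^{-\frac{i-y}{c}}\,\frac{\bigl(\tfrac{i-y}{c}\bigr)^{i}}{i!},
\end{equation}
which, combined with \eqref{eq:plan-factor}, is \eqref{eq:densityut} after rearranging. Only $i\ge\lceil y\rceil$ contribute, since the constraint at $i$ is vacuous once $s_i\le0$; and $s_i<t$ for $i\le m$, so the arguments of $f_{T_0}$ are positive. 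I would prove \eqref{eq:plan-main} by expanding $1-Q_y(t)$ through a first-passage decomposition of the event that the empirical process crosses the shifted barrier: condition on the index $i$ at which it first reaches height $i$ before time $s_i$ (so that $Y$ would already be below $0$), which makes $s_i$ a renewal epoch; then use that $Z$ has independent increments, that conditioning on $\{Z(s_i)=i\}$ puts $Y$ at level $0$ at $s_i$ --- with subsequent evolution that of a fresh process started at $0$, contributing $f_{T_0}(t-s_i)$ --- and the Poisson identity $P(Z(s_i)=i)=\e^{-s_i}s_i^{i}/i!$. Equivalently one may verify \eqref{eq:densityut} by induction on $m=\lfloor y+ct\rfloor$: the first-passage decomposition expresses $Q_y(t)$ in terms of the same quantity at strictly smaller $m$, and the inductive step is an integration against a Beta-type density followed by a Vandermonde-type binomial summation. (As a cross-check, specialising the classical Seal--Prabhu finite-time ruin formula for the Cram\'er-Lundberg model to deterministic unit claims should also reproduce \eqref{eq:densityut}.)

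\noindent\emph{Main obstacle.} The first two steps are routine; the work is in turning the shifted-barrier ballot problem for $Q_y(t)$ into the clean sum \eqref{eq:plan-main}. The delicate points are: making the first-crossing decomposition a bona fide (disjoint, or telescoping) decomposition in spite of the barrier being overshot; the interaction between the ``first $i-1$ jumps safe'' sub-event and the binomial split of the $m$ points across $s_i$ --- which is why a naive term-by-term reading of \eqref{eq:plan-main} is false although the sum is correct, so one must either sum carefully or argue inductively; the Vandermonde/ballot identity that collapses the resulting double sum; and a uniform handling of the probability-zero boundary configurations ($y$, $ct$ or $y+ct$ integer), which do not affect the density but must be excised cleanly.
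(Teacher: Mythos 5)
Your reduction to the order-statistics problem, the treatment of the case $y+ct<1$, and the ballot-theorem evaluation of $Q_0(t)$ are sound and essentially coincide with the paper's argument (the paper conditions on $Z(t)=\lfloor y+ct\rfloor$ and a block arrival in $[t,t+dt]$, which is the same factorisation you obtain via the $\Gamma(m+1,1)$ density and uniform order statistics). The gap is in the one step you yourself flag as the main obstacle: the identity expressing $(t^{m}\e^{-t}/m!)\,\bigl(1-Q_y(t)\bigr)$ as $\sum_{i=\lceil y\rceil}^{m} f_{T_0}(t-s_i)\,\e^{-s_i}s_i^{i}/i!$ is asserted but not proved, and the mechanism you sketch for it fails as stated. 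Conditioning on the first index $i$ at which the jump count beats the barrier (i.e.\ $\tau_i<s_i$) does not make $s_i$ a renewal epoch: on that event one only knows $Z(s_i)\ge i$, not $Z(s_i)=i$, so $Y(s_i)$ need not be $0$; moreover the first-crossing event carries the extra constraint that jumps $1,\dots,i-1$ be safe, which is incompatible with the unconstrained Poisson weight $P(Z(s_i)=i)=\e^{-s_i}s_i^{i}/i!$ that you attach to the term. This mismatch is exactly what pushes you toward an unexecuted ``Vandermonde-type'' summation or an induction, neither of which is carried out.

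The missing idea --- and the paper's route --- is to decompose the crossing event by the \emph{last up-crossing} of level $0$ rather than the first down-crossing. Because $Y$ rises linearly and all jumps have size exactly $1$, an up-crossing can only occur at the times $s_i=(i-y)/c$, $i=\lceil y\rceil,\dots,m$, and (a.s.) an up-crossing at $s_i$ is \emph{equivalent} to $\{Z(s_i)=i\}$, with no condition whatsoever on the earlier jumps, since $Z(s_i)=i$ already forces $Y(s_i^-)<0$. On $\{Z(s_i)=i\}$, requiring that $s_i$ be the last up-crossing before $t$ and that confirmation occur in $[t,t+dt]$ is, by independence of increments, exactly the event that a fresh copy started at $0$ has $T_0\in[t-s_i,t-s_i+dt]$; these events are disjoint in $i$ and exhaust the event $\{Z(t)=m,\ Y \text{ crossed } 0 \text{ before } t,\ \text{block arrival in } [t,t+dt]\}$. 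Hence the sum holds term by term --- contrary to your claim that a term-by-term reading is false --- and no combinatorial identity or induction is needed. With this replacement of your first-passage decomposition by the last-up-crossing decomposition, your outline becomes a complete proof and matches the paper's.
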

The proof of the proposition is presented in Section \ref{sec:cor1}.
We note that Proposition \ref{cor:T(y,c)density} measures $T_y$ in continuous time, but users are often also interested in the number of blocks until confirmation. This form of the confirmation time can be viewed as the  discrete time analogue of $T_y$: Let $N_y$ be the number of blocks to confirmation, then 
\begin{align}
    N_y &:= Z(T_y)\nonumber\\
        &:=\min_t\{Z(t): Y(t)<0\} \label{eq:def_Ny}.
\end{align}
We next provide the tail (or complementary) distribution of $N_y$ in the following proposition:

\begin{proposition}[Confirmation time in blocks]
\label{prop:DM1BP}
Consider the process $\{Y(t)\}_{t\geq0}$ as defined in Equation \eqref{eq:def-Y(t)} with initial level $Y(0)=y=m+\varepsilon$, with $m\in\mathbb{N}_0$ and $\varepsilon\in[0,1)$. Then the tail distribution of the number of blocks to confirmation $N_y$, as defined in Equation \eqref{eq:def_Ny},
is given by: $\prob[ N_y > m ] =1$ and, for $n\in \mathbb{N}$,
\begin{equation}
    \prob[ N_y>n+m ]
    =
    \sum_{\vec k_n\in K_n(m)}
    \e^{-\frac{1}{c}(n-\varepsilon)}
     \frac{ (1/c)^{k_1+\ldots +k_{n-1}} }{k_0!\cdots k_{n-1}!}
     ((1/c)(1-\varepsilon))^{k_0},
\label{eq:stadjey=y}
\end{equation}
with $\vec{k}_n=(k_0,\ldots,k_{n-1})$ and $K_n(m)=\{ \vec{k}_n \in \mathbb{N}_0^n \mid k_0+\cdots +k_{i}\leq i+m, \mbox{ for all } i=0,1,\ldots,n-1\}$.
\end{proposition}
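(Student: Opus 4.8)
The plan is to translate the survival event $\{N_y > n+m\}$ into a statement about the arrival times of the block–mining Poisson process $Z$, and then re-express it through the increments of $Z$ over a fixed deterministic partition of the time axis, where the independence of Poisson increments over disjoint intervals produces the product form in \eqref{eq:stadjey=y}.

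First I would exploit the path structure of $Y$: between the jumps of $Z$ it is linear with positive slope $c$, and it only jumps downward (by exactly $1$), so its running minimum on any interval is attained immediately after a jump. Writing $0=\tau_0<\tau_1<\tau_2<\cdots$ for the jump times of $Z$, ruin can therefore occur only at some $\tau_j$, and an easy induction on $j$ shows that $\{N_y>n+m\}$ coincides with the event that the process survives the first $n+m$ jumps, i.e.\ $Y(\tau_j)\ge 0$ for all $j=1,\dots,n+m$. Using $Y(\tau_j)=m+\varepsilon+c\tau_j-j$, this is equivalent to $\tau_j\ge (j-m-\varepsilon)/c$ for all such $j$. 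For $j\le m$ the right-hand side is non-positive whereas $\tau_j\ge 0$ a.s., which already yields $\prob[N_y>m]=1$; hence the only binding constraints are those for $j=m+i$, $i=1,\dots,n$, namely $\tau_{m+i}\ge (i-\varepsilon)/c$.

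Next I would dualize each condition on an arrival time into a condition on the counting process, via $\tau_{m+i}\ge t \iff Z(t)\le m+i-1$ (the two events agree up to a $\prob$-null set, since a rate-$1$ Poisson process has no atom at a fixed deterministic time, which is harmless for the probability). Applying this at $t_i:=(i-\varepsilon)/c$, and noting $t_1=(1-\varepsilon)/c$ and $t_{i+1}-t_i=1/c$, the survival event becomes $\{Z(t_i)\le m+i-1\text{ for }i=1,\dots,n\}$. Introducing the increments $k_0:=Z(t_1)$ and $k_i:=Z(t_{i+1})-Z(t_i)$ for $i=1,\dots,n-1$, so that $Z(t_i)=k_0+\cdots+k_{i-1}$, this event is exactly $\{k_0+\cdots+k_j\le j+m \text{ for } j=0,\dots,n-1\}=\{(k_0,\dots,k_{n-1})\in K_n(m)\}$.

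Finally, because $Z$ has independent increments over the disjoint intervals $(0,t_1],(t_1,t_2],\dots,(t_{n-1},t_n]$, the variables $k_0,k_1,\dots,k_{n-1}$ are independent with $k_0\sim\mathrm{Poisson}((1-\varepsilon)/c)$ and $k_i\sim\mathrm{Poisson}(1/c)$ for $i\ge 1$. Summing the product of the corresponding Poisson mass functions over $\vec k_n\in K_n(m)$ and collecting the exponential factors through $e^{-(1-\varepsilon)/c}\,e^{-(n-1)/c}=e^{-(n-\varepsilon)/c}$ gives precisely \eqref{eq:stadjey=y}. The only genuinely delicate point is the first step---justifying that the running minimum of $Y$ is realised at the post-jump instants and that the strict-inequality definition of ruin ($Y<0$) causes no trouble---but this follows from the monotonicity of $Y$ between jumps plus the induction mentioned above; everything afterwards is bookkeeping with Poisson increments.
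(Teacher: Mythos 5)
Your proposal is correct and follows essentially the same route as the paper's proof: partition time at the points $(i-\varepsilon)/c$, identify the survival event $\{N_y>n+m\}$ with $\vec k_n\in K_n(m)$ for the arrival counts in these intervals, and use independence of Poisson increments to obtain the product form. The only difference is that you spell out, via the jump times $\tau_j$ and the duality $\{\tau_{m+i}\geq t_i\}=\{Z(t_i)\leq m+i-1\}$ (up to a null set), the equivalence that the paper asserts in a single line, which is a welcome but not essentially new addition.
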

The proof of Proposition \ref{prop:DM1BP} is presented in Section \ref{sec:prop2}.
While the density function in Proposition \ref{cor:T(y,c)density} and the tail distribution function in Proposition \ref{prop:DM1BP} are explicit,  using it to compute the expectation is not straightforward and may be time-consuming in practice. Therefore, we  derive directly an explicit expression for  both the mean time and number of blocks to confirmation:

\begin{proposition}[The expected confirmation time]
\label{prop:ET}
Consider the process $\{Y(t)\}_{t\geq0}$ as defined in Equation \eqref{eq:def-Y(t)} with initial level $Y(0)=y=m+\varepsilon$, with $m\in\mathbb{N}_0$ and $\varepsilon\in[0,1)$. Let $T_y$ and  $N_y$ be the  confirmation times defined in Equation  \eqref{eq:def-confirmation} and Equation \eqref{eq:def_Ny} respectively. Then 
$\expec[N_y]=\expec[T_y]$ and
for $y\geq 1$, the expected confirmation time is recursively given by
    \begin{equation}
    \label{eq:E(T)alg}
   \mathbb{E}[T_y] =
   \int_0^{(1-\varepsilon)/c}
   (x+\mathbb{E}[T_{y+cx-1}]) \e^{-x}dx +
   \e^{-\frac{1-\varepsilon}{c}} \left(\tfrac{1-\varepsilon}{c}+ \mathbb{E}[T_{m+1}]\right)
    \end{equation}
    and in case $y=\varepsilon<1$,
\begin{equation}\label{eq:E(t)y<1}
\mathbb{E}[T_y] = \int_0^{(1-\varepsilon)/c}
   x \e^{-x}dx +
   \e^{-\frac{1-\varepsilon}{c}} \left[ \frac{1-\varepsilon}{c}+
   \frac{1}{c\rho}e^{1/c} + 1 - e^{1/c}\right],
\end{equation}
    with $\mathbb{E}[T_0]=0$. 
    Here, $\rho$ is the (unique) positive solution of the equation
    $cy - 1 + \e^{-y} = 0$. 
\end{proposition}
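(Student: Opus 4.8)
The plan is to treat the three assertions in turn and to reduce the entire computation to the single number $\mathbb{E}[T_1]$.

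\textbf{Step 1: $\mathbb{E}[N_y]=\mathbb{E}[T_y]$.} First, $\mathbb{E}[T_y]<\infty$: since the net drift $c-1<0$, $T_y$ is dominated by the first passage time of the Poisson process $Z$ above the line $y+ct$, and at the block epochs $Z(t)-ct$ is a random walk with positive mean increment $1-c$, so this passage time has finite mean. Then $Z(t)-t$ is a martingale, and optional stopping at $T_y$ — legitimate by monotone convergence applied to the non-negative increasing processes $Z(t\wedge T_y)$ and $t\wedge T_y$ — gives $\mathbb{E}[N_y]=\mathbb{E}[Z(T_y)]=\mathbb{E}[T_y]$, i.e.\ Wald's identity for a rate-one Poisson process.

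\textbf{Step 2: the recursion \eqref{eq:E(T)alg} and the formula \eqref{eq:E(t)y<1}.} Write $y=m+\varepsilon$ and put $a:=(1-\varepsilon)/c$; condition on $\tau:=\min\{X,a\}$, where $X\sim\mathrm{Exp}(1)$ is the time of the first block. On $\{X<a\}$ the process jumps at $X$ to $Y(X)=y+cX-1$; if $y\ge1$ this value lies in $[m-1,m)\subseteq[0,\infty)$, so no ruin has occurred and, by the strong Markov property, the residual time to ruin is a fresh copy of $T_{y+cX-1}$; if instead $y=\varepsilon<1$, then $Y(X)=\varepsilon+cX-1<0$, so ruin occurs exactly at $X$. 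On $\{X\ge a\}$ (probability $\e^{-a}$) no block has appeared by time $a$ and $Y(a)=y+ca=m+1$ \emph{exactly}; by the memorylessness of the Poisson process the process regenerates from this integer level, so the residual time is a fresh copy of $T_{m+1}$ (of $T_1$ when $y<1$). Taking expectations, with $\mathbb{E}[X\1\{X<a\}]=\int_0^a x\e^{-x}\,dx$ and $\mathbb{P}(X\ge a)=\e^{-a}$, yields exactly \eqref{eq:E(T)alg} for $y\ge1$; for $y=\varepsilon<1$ it gives $\mathbb{E}[T_y]=\int_0^a x\e^{-x}\,dx+\e^{-a}\bigl(a+\mathbb{E}[T_1]\bigr)$, which becomes \eqref{eq:E(t)y<1} once the value of $\mathbb{E}[T_1]$ from Step 3 is inserted and $\int_0^a x\e^{-x}\,dx=1-(1+a)\e^{-a}$ is used.

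\textbf{Step 3 (the crux): computing $\mathbb{E}[T_1]$.} Set $h(y):=\mathbb{E}[T_y]$ for $y\ge0$ and $h\equiv0$ on $(-\infty,0)$. Conditioning on the first block (without the split of Step 2) and substituting $u=y+cx-1$ gives, for all $y>0$, $h(y)=1+c^{-1}\e^{(y-1)/c}\int_{(y-1)^+}^{\infty}h(u)\e^{-u/c}\,du$; differentiating yields the delay-differential equation
\[
c\,h'(y)=h(y)-h(y-1)-1,\qquad y>0,\qquad h\equiv0\text{ on }(-\infty,0).
\]
In particular $c\,h'(y)=h(y)-1$ on $(0,1)$, so $h(y)=1+\e^{(y-1)/c}\bigl(h(1)-1\bigr)$ there. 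Next, optional stopping for the martingale $Y(t)-(c-1)t$ at $T_y$, together with $Y(T_y)=Y(T_y^-)-1$ and $Y(T_y^-)\in(0,1)$, gives $\mathbb{E}[T_y]=\bigl(y+1-\mathbb{E}[Y(T_y^-)]\bigr)/(1-c)$; hence $h$ grows at most linearly and its Laplace transform $\widehat h(s)=\int_0^\infty h(y)\e^{-sy}\,dy$ is analytic on $\{\mathrm{Re}\,s>0\}$. Transforming the delay-differential equation (using $\int_0^\infty h(y-1)\e^{-sy}\,dy=\e^{-s}\widehat h(s)$, valid since $h$ vanishes on $(-\infty,0)$) gives
\[
\widehat h(s)=\frac{c\,h(0)\,s-1}{s\,\psi(s)},\qquad \psi(s):=cs-1+\e^{-s},
\]
so that $\rho$ of the proposition is the positive zero of $\psi$. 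Now $\psi$ is convex with $\psi(0)=0$, $\psi'(0)=c-1<0$ and $\psi\to\infty$, so $\rho$ is its unique positive zero and it is simple ($\psi'(\rho)=c-\e^{-\rho}\neq0$); thus $s\mapsto s\,\psi(s)$ has a simple zero at $\rho\in\{\mathrm{Re}\,s>0\}$. Since $\widehat h$ is analytic there, the numerator must vanish at $\rho$, i.e.\ $c\,h(0)\,\rho=1$, whence $\mathbb{E}[T_0]=h(0)=1/(c\rho)$. Substituting $y=0$ into $h(y)=1+\e^{(y-1)/c}(h(1)-1)$ then gives $\mathbb{E}[T_1]=h(1)=1+\e^{1/c}\bigl(1/(c\rho)-1\bigr)=\e^{1/c}/(c\rho)+1-\e^{1/c}$, which is exactly the bracketed term in \eqref{eq:E(t)y<1} minus $(1-\varepsilon)/c$, closing Step 2.

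\textbf{Main obstacle.} Steps 1 and 2 are routine conditioning and bookkeeping; the real content is in Step 3, and there the delicate point is not the algebra but the analytic input: one needs that $\mathbb{E}[T_y]$ grows at most linearly, so that $\widehat h$ is analytic on the right half-plane and the would-be pole of $(c\,h(0)s-1)/(s\,\psi(s))$ at $s=\rho$ is genuinely forced to cancel — equivalently, so that the explosive mode $\e^{\rho y}$ cannot appear in $h$. This linear bound is precisely what the $Y(t)-(c-1)t$ martingale delivers. A minor technical point to note in passing is that $h$ is only piecewise $C^1$ (with kinks at the integers), which is harmless because $Y$ spends zero time at any fixed level and the integration by parts for $\widehat{h'}$ requires only absolute continuity of $h$.
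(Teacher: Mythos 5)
Your proposal is correct, and Steps 1--2 coincide with the paper's argument: the identity $\mathbb{E}[N_y]=\mathbb{E}[T_y]$ is Wald's identity (Theorem \ref{lem:T=N}), and the recursion \eqref{eq:E(T)alg} is obtained, exactly as in the paper, by conditioning on the first block arrival and using memorylessness when no block arrives before level $m+1$ is reached. Where you genuinely diverge is in pinning down the constant that seeds the recursion: the paper imports the Picard--Lefèvre-type closed form for integer starting levels (Theorem \ref{prop:ET_N}, cited from Picard (1998)), simplifies it to \eqref{eq:E(T)}, and substitutes $y=1$ to get $\mathbb{E}[T_1]=\tfrac{1}{c\rho}e^{1/c}+1-e^{1/c}$, whereas you derive this self-containedly: you set up the delay-differential equation $c\,h'(y)=h(y)-h(y-1)-1$, solve it on $(0,1)$, and determine $h(0)=1/(c\rho)$ by taking Laplace transforms and observing that analyticity of $\widehat h$ on the right half-plane (guaranteed by the linear bound $\mathbb{E}[T_y]\le (y+1)/(1-c)$ from optional stopping of $Y(t)-(c-1)t$) forces the numerator $c\,h(0)s-1$ to vanish at the Lundberg root $\rho$ of $\psi(s)=cs-1+e^{-s}$. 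Your route buys independence from the external citation and, as a by-product, proves the uniqueness and simplicity of $\rho$, at the price of the transform/analyticity machinery and the growth bound; the paper's route additionally yields the full closed-form \eqref{eq:E(T)} for every integer $y$, which it uses elsewhere to seed the iterative scheme at all integer points, not just at $y=1$. Two small remarks: your value $h(0)=1/(c\rho)$ is consistent with \eqref{eq:E(t)y<1} at $\varepsilon=0$ and with \eqref{eq:E(T)} at $y=0$ (the statement's side condition $\mathbb{E}[T_0]=0$ is only a convention for the integrand on a null set, so your proof of the displayed formulas is unaffected), and the kink of $h$ occurs only at $y=1$ (at larger integers only higher derivatives jump), which is immaterial since, as you note, absolute continuity suffices for the integration by parts.
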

While Proposition \ref{prop:ET} determines $\expec[T_y]$ implicitly, we provide a detailed iterative scheme on how one can determine $\expec[T_y]$ for all $y\geq 0$ in Section \ref{sec:prop3}.

\section{Limiting results for the batch-service queue (BSQ)}
\label{sec:queueing_model}
As detailed in Section \ref{sec:relation to bc lit}, the BSQ has been used to model the Bitcoin mempool before. However, such models had their shortcomings, in both their specific modelling assumptions and limited tractable results. 
To mitigate the former, we consider a more realistic setting where dependencies between the transaction arrival process and block sizes is taken into account. To mitigate the latter, we take an appropriate limit that scales the BSQ to either its fluid or diffusion limit. In Section \ref{sec:case_study} we reflect on the practical implications of the scaling and how they can be exploited in real-world applications.

We first show that the BSQ has a fluid limit for the transaction count process and the cumulative weight process, which, for both, is a CL model.
We then show that if we additionally scale the rate of incoming transactions to the rate of confirmed transactions, the BSQ has a diffusion limit for the transaction count process and the cumulative weight process, which is, for both, a Brownian motion.
Finally, we show that this also implies that the confirmation time of the BSQ (also called hitting time, i.e. the first time a process hits 0) converges to the confirmation time of the CL model and the Brownian motion in the fluid and diffusion limit setting respectively.

We first provide details on the BSQ.
We assume that 
transactions arrive according to a Poisson process $A_\nu(t)$ with parameter $\nu$, where the $i-th$ transaction is of weight $X_i$. We assume $\{X_i\}_{i\geq 1}$ are i.i.d. copies of $X$.
 Next, we assume a block mining (that represents a batch-service) that has size $K$ occurs according to a Poisson process $D_\lambda(t)$ with parameter $\lambda$.
 To avoid trivialities, we assume $X\ll K$ w.p. 1.
 This process is independent of the number of transactions in the queue.
 During a block mining, as many transactions as possible are confirmed in a first-come-first-serve manner, such that the cumulative weight of these transactions does not exceed $K$. In case the cumulative weight of all transactions is below $K$, the entire queue is confirmed at the finding of a block. Finally, we assume that at time $t=0$ there are $m$ transactions in the system, of which the weights are observable.
 It is noteworthy that these transactions may not be i.i.d., as one transaction was too large in size to fit into the last confirmed block. However, this is taken into account for the first $m$ transactions in Equation \eqref{eq:fluid_initial_conditions} and in Section \ref{app:fluid proof workload} for future transactions.
 
 Denote the cumulative weight process, (coinciding with the workload in the BSQ) and the transactions count process (coinciding with the total number of customers in the BSQ), at time $t$ by $\Q_{m,\nu,\lambda,K}^*(t)$ and $\M_{m,\nu,\lambda,K}^*(t)$ respectively.
For convenience, we suppress the subscripts in the subsequent analysis.
 Furthermore, assume $A_\nu(t)$ and $D_\lambda(t)$ are independent and also independent of all transaction weights $\{X_i\}_{i\geq1}$. 
 Next,
let $T_{\Q^*}$ and $T_{\M^*}$  be the confirmation times of $\Q^*(t)$ and $\M^*(t)$ , i.e. the first time these processes cross 0, then 
\begin{equation}
T_{\Q^*} = \inf_{t>0}\{ \Q^*(t) = 0 \}=
\inf_{t>0}\{ \M^*(t) = 0 \}=
T_{\M^*}
.
\end{equation}
These first confirmation times can be understood as the busy period time of the BSQ with some initial work.
Directly after time $T_{\Q^*}$, we observe that in practice $\Q^*(T_{\Q^*}^+)$ is set to $0$ after which it continues as described above, and a similar process occurs for $\M^*(T_{\M^*}^+)$. For fixed $t$, it is generally 
complicated to find elegant expressions for $\Q^*(t)$ and $\M^*(t)$. However, when restricting 
$t\in[0,T_{\Q^*}]$ we can define two simpler auxiliary processes as follows. First, define
\begin{equation}
\label{eq:defQ}
      \Q(t) = \sum_{h=1}^m X_h + \sum^{m+A_\nu(t)}_{i=m+1} X_i -\sum_{j=1}^{D_\lambda(t)} \big(K-U_j(K)\big) ,
\end{equation}
where there are $m$ observable initial transactions in the system and where $U_j(K)$ denotes the space left over in the $j$-th block mining of size $K$ that could not be filled with the next transaction. We refer to Equation \eqref{eq:defUk} for a formal definition of the latter expression. Second, define
\begin{equation}
\label{eq:defM}
    \M(t) = m+ A_\nu(t)-\sum_{j=1}^{D_\lambda(t)} B_j(K),
\end{equation}
where $m$ denotes the observable initial number of transactions and $B_j(K)$ denotes the number of transactions that can fit in the $j$-th batch of size $K$, we refer to Equation \eqref{eq:defB} for a formal definition. Note that this imposes a natural dependence of the different terms in $\Q(t)$ and $\M(t)$.

Next, it is straightforward to verify that
$T_{\Q^*} \overset{d}{=} T_{\Q}$ and $T_{\M^*} \overset{d}{=} T_{\M}$. Moreover,
\begin{equation}
\label{eq:Q=Q*}
    (\Q(t))_{t\in[0,T_{\Q}]}=  (\Q^*(t))_{t\in[0,T_{\Q^*}]},
\end{equation}
and
\begin{equation}
\label{eq:M=M*}
     (\M(t))_{t\in[0,T_{\M}]}=  (\M^*(t))_{t\in[0,T_{\M^*}]}.
\end{equation}

Recall $\Y(t)$ is a CL model as defined in Equation \eqref{eq:def-Y(t)} with starting position $y$, slope $c$, block arrival rate $\lambda$ and with block size 1.
We recall from Equation \eqref{eq:def-confirmation}, $T_{y,c,\lambda,1}$ as the confirmation time of $\Y(t)$, i.e. $T_{y,c,\lambda,1} = \inf_{t>0}\{\Y(t)\leq 0\}$.
Our main contribution then regards the convergence of the confirmation times of the appropriately scaled BSQ, both for the cumulative weights process $\Q(t)$ and the transaction count process $\M(t)$, to the confirmation time of the CL model.
Such a limit is also known as a \textit{fluid limit}, see \cite[Chapter 6]{Chen2001},  for more details.
\begin{theorem}[Convergence of the confirmation time via a fluid limit]
\label{thm:fluidlimit}
Consider a BSQ as described above. Assume that the i.i.d. weights of transactions have the same distribution as $X$ and assume $\nu\expec[X]<K\lambda$.
For some given integer function $m(n)$, define 
\begin{equation}
\label{eq:defQn}
    \Q^{(n)}(t) =\sum_{h=1}^{m(n)}X_h+ \sum^{m(n)+A_{\nu n}(t)}_{i=m(n)+1} X_i -\sum_{j=1}^{D_\lambda(t)} (nK-U_j(nK)),  \end{equation}
and
\begin{equation}
\label{eq:defMn}
      \M^{(n)}(t) =m(n) +A_{\nu n}(t)-\sum_{j=1}^{D_\lambda(t)} B_j(Kn).
\end{equation}   
Assume furthermore that, for fixed $y\geq 0$, $m(n)$ is such that
\begin{equation}
\label{eq:fluid_initial_conditions}
    \lim_{n\to\infty}
    \frac{1}{n}
    \Q^{(n)}(0) = Ky,
    \qquad 
    \text{ and }
    \qquad
    \lim_{n\to\infty}
    \frac{1}{n}
    \M^{(n)}(0) = \frac{Ky}{\expec[X]}.
\end{equation}
Recall $T_{\Q^{(n)}}$ and $T_{\M^{(n)}}$ as the confirmation times of the above processes respectively.    
    Then 
\begin{equation}
\label{eq:thm main result}
    T_{\frac{\Q^{(n)}}{Kn}} \xrightarrow{d}
    T_{y,\nu\expec[X]/K,\lambda}\,,
    \quad 
    \text{ and }
    \quad 
    T_{\frac{\expec[X] \M^{(n)}}{Kn}} \xrightarrow{d}
    T_{y,\nu\expec[X]/K,\lambda}.
\end{equation}
\end{theorem}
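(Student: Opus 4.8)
The plan is to exploit a structural feature of this scaling: space is inflated by the factor $n$ but time is untouched, so the block–mining process $D_\lambda$ is \emph{identical for every $n$}. As a consequence both the prelimit workload $\Q^{(n)}$ and its candidate limit $\Y$ (the CL model with slope $c:=\nu\expec[X]/K$, which satisfies $c<\lambda$ by the stability hypothesis $\nu\expec[X]<K\lambda$) are nondecreasing between the common jump epochs $0<t_1<t_2<\cdots$ of $D_\lambda$, so a first passage below $0$ can occur only \emph{at} one of these epochs. Since $\Y$ sits at height $y+ct_j-j$ right after its $j$-th jump, one has $T_{y,c,\lambda,1}=t_J$ with $J:=\min\{j\ge 1:\,y+ct_j-j<0\}$, and $J<\infty$ almost surely because $t_j/j\to 1/\lambda$ and $c<\lambda$. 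Hence it suffices to prove $\prob[T_{\Q^{(n)}}=t_J]\to 1$ and $\prob[T_{\M^{(n)}}=t_J]\to 1$, which yields \eqref{eq:thm main result}. Throughout I would condition on $D_\lambda$, i.e.\ on the sequence $(t_j)_j$, which is independent of the arrival process and of all transaction weights.

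Fix a realisation of $(t_j)_j$ outside the null set on which $y+ct_j-j=0$ for some $j$; then $J$ is a well-defined finite integer with $y+ct_j-j>0$ for every $j<J$. Two ingredients feed an induction on $j'=0,1,\dots,J$. First, the \emph{continuous part}: over each fixed window $(t_{j'-1},t_{j'}]$ the number of arrivals is Poisson$(\nu n(t_{j'}-t_{j'-1}))$, so by the strong law (a random time change of the renewal process $\sum X_i$, using $\expec[X]<\infty$, which holds since $X\ll K$) the rescaled increment $\tfrac1{Kn}\sum X_i$ over this window converges a.s.\ to $c\,(t_{j'}-t_{j'-1})$. Second, the \emph{downward jumps}: so long as $\Q^{(n)}$ has not yet reached $0$, the workload just before a block mining must be $\ge nK$ (otherwise that block confirms the whole queue and $\Q^{(n)}$ reaches $0$ at that very epoch), so the leftover satisfies $U_j(nK)\le K$ and $\tfrac1{Kn}(nK-U_j(nK))=1-\tfrac{U_j(nK)}{Kn}\to 1$; the analogue for the count process is $\tfrac{\expec[X]}{Kn}B_j(nK)\to 1$, since a full block of capacity $nK$ confirms the $B_j(nK)$ head-of-line transactions whose weights are i.i.d.\ copies of $X$ apart from at most one $O(1)$ ``bumped'' transaction, so $B_j(nK)/n\to K/\expec[X]$ by the elementary renewal theorem.

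The induction then reads: with conditional probability $\to 1$, $\tfrac1{Kn}\Q^{(n)}(t_{j'})\to y+ct_{j'}-j'$ and $\tfrac1{Kn}\Q^{(n)}(t_{j'}^-)\to y+ct_{j'}-(j'-1)$ for all $j'\le J$. The base case $j'=0$ is assumption \eqref{eq:fluid_initial_conditions}; the step adds the window increment and then the block jump, which for $j'\le J-1$ is \emph{full} because the pre-mining workload converges to $(y+ct_{j'}-j')+1>1$, hence exceeds $nK$ with high probability and so drops the rescaled process by $\to 1$, keeping $\tfrac1{Kn}\Q^{(n)}(t_{j'})\to y+ct_{j'}-j'>0$. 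At $j'=J$ the pre-mining workload converges to $(y+ct_J-J)+1<1$, so it is below $nK$ with high probability, the $J$-th block empties the queue, $\Q^{(n)}(t_J)=0$, and since $\Q^{(n)}$ was strictly positive at $t_1,\dots,t_{J-1}$ and nondecreasing in between, $T_{\Q^{(n)}}=t_J$ with conditional probability $\to 1$. Replacing $\tfrac1{Kn}\Q^{(n)}$ by $\tfrac{\expec[X]}{Kn}\M^{(n)}$, with $\tfrac{\expec[X]}{Kn}A_{\nu n}(t)\to ct$ and the count-process version of the jump ingredient, gives $T_{\M^{(n)}}=t_J$ with conditional probability $\to 1$ by the same induction. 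Integrating out $D_\lambda$ (the conditional probabilities are bounded by $1$ and converge for a.e.\ realisation, so dominated convergence applies) produces the unconditional statements, and hence \eqref{eq:thm main result}.

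The hard part will be handling the induction up to the \emph{random} ruin index $J$: one must be certain that the prelimit process shadows the CL path precisely until the jump that causes ruin, with no earlier spurious passage to $0$. Conditioning on $D_\lambda$ (which freezes both $J$ and the jump epochs), the almost sure non-degeneracy $y+ct_j-j\ne 0$, and the finiteness $J<\infty$ (from $c<\lambda$) are exactly what make this rigorous, while $X\ll K$ guarantees that the leftovers $U_j(nK)$ and the anomalous initial transaction are $O(1)$ and wash out under the $1/n$ scaling. An alternative route is to prove $\tfrac1{Kn}\Q^{(n)}\Rightarrow\Y$ in the Skorokhod $J_1$ topology (here even uniformly on compacta, since the jump epochs coincide for all $n$) and invoke continuity of the first-passage functional $x\mapsto\inf\{t:\,x(t)<0\}$ at $\Y$; this meets the same difficulty, now phrased as the requirement that $\Y$ cross $0$ transversally, which holds because $\Y$ only descends through a level by a jump and $\Y(T^-)<1$ always.
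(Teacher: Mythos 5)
Your proposal is correct, but it takes a genuinely different route from the paper's. The paper proves process-level convergence in the Skorohod space $D[0,\tau]$: finite-dimensional convergence of $\Q^{(n)}/(Kn)$ and $\expec[X]\M^{(n)}/(Kn)$ is obtained via generating-function computations (a Tauberian expansion for $\expec[z^{X/(Kn)}]$ plus a Markov bound showing the leftover terms $U_j(nK)$ vanish), tightness is checked through an Arzel\'a--Ascoli criterion in the appendix, the two are combined via Billingsley's Theorem 13.1, and the passage to hitting times is handled by a separate generalised continuous-mapping statement (Proposition \ref{prop:Skorohod conv implies hitting}) --- which is exactly the ``alternative route'' you mention at the end. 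You instead exploit the structural fact that the block process $D_\lambda$ is not scaled in the fluid regime: conditioning on its epochs, noting that both the prelimit and the limit can only pass $0$ at those epochs, and running an induction over the finitely many epochs up to the (a.s.\ finite, a.s.\ non-degenerate) CL ruin index $J$, using per-window laws of large numbers for the arrivals, the bound $U_j(nK)\le K$, and the renewal theorem for $B_j(Kn)$. This avoids tightness and the continuous mapping theorem altogether, delivers the slightly stronger statement $\prob[\,T_{\Q^{(n)}}=t_J\mid D_\lambda\,]\to 1$, and needs only $\expec[X]<\infty$ (the paper's bound \eqref{eq:(B)} implicitly uses $\expec[X^2]$); the price is that it leans on features special to this scaling (unscaled time, unit limiting jumps, monotonicity between jumps), whereas the paper's machinery is generic and is reused for the diffusion limit. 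Two small imprecisions, neither fatal: the per-window convergence should be stated in probability rather than almost surely (the processes $A_{\nu n}$ for different $n$ need not be defined on a common space), and the remark that the workload just before a mining must exceed $nK$ while the queue is nonempty pertains to the actual BSQ $\Q^*$ rather than to the auxiliary process $\Q^{(n)}$ of \eqref{eq:defQn}, whose downward jump is always $nK-U_j(nK)$ --- but your argument only needs $U_j(nK)/(Kn)\to 0$, which holds unconditionally since $U_j(nK)$ is bounded by a single transaction weight.
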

Intuitively, Theorem \ref{thm:fluidlimit} states that $\Q^{(n)}(t)/(Kn)$ and $\expec[X]\M^{(n)}(t)/(Kn)$, i.e. the scaled BSQ where the rate of incoming transactions and blocksizes are proportionally scaled to infinity,  resembles the CL model.
Such a scaling is realistic in practice as discussed in Section \ref{sec:relation to bc lit}.
Moreover, the result states that the confirmation time converges accordingly. We refer to Figure \ref{fig:fluidlimits} for a visual representation.
\begin{figure}[ht]
    \centering
    \includegraphics[scale=0.7]{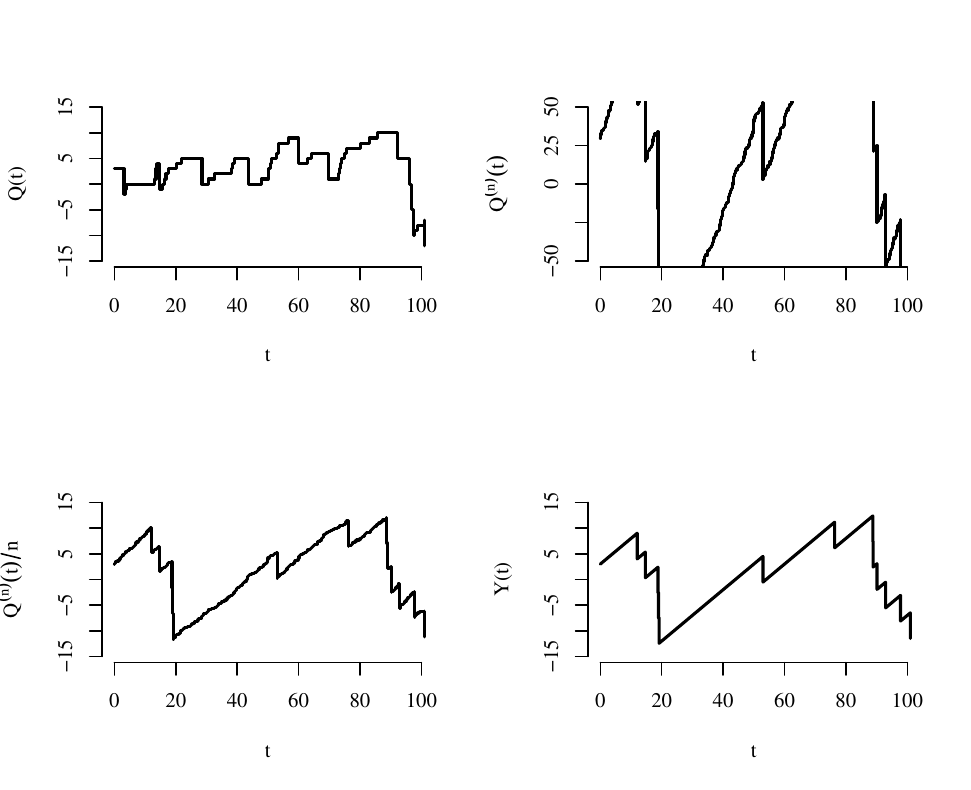}
    \caption{Example of the different stochastic processes introduced above.}
    \label{fig:fluidlimits}
\end{figure}
Based on our observation in Equation \eqref{eq:Q=Q*} and Equation \eqref{eq:M=M*}, Theorem \ref{thm:fluidlimit} is sufficient to show that the confirmation times of $Q_{m,\nu,\lambda,K}^{*}$ and $M_{m,\nu,\lambda,K}^{*}$ satisfy the same limit. However, the proof for $Q^{(n)}(t)$ and $M^{(n)}(t)$ is significantly simpler and is detailed in Section \ref{sec:fluidlimit proof}.

Finally, we provide our second result for the confirmation time of a BSQ scaled to a \textit{heavy-traffic regime}, i.e. where, additionally to the scaling in Theorem \ref{thm:fluidlimit}, the rate of incoming transactions converges to the rate of outgoing transactions. Such a limit is also known as a \textit{diffusion limit}, see for example \cite{Whitt2002}. We show that in this regime the transactions count process and the cumulative weight process of the BSQ have a diffusion limit, which is a Brownian motion (BM). Such a diffusion limit was argued before in \cite{Gundlach2021} but then for the CL model. One can, in a sense, already combine that result with Theorem \ref{thm:fluidlimit} to show a heavy-traffic limit of a BSQ to a BM, but then via the CL model.
However, the following result shows a direct convergence of the BSQ to the BM
without needing the CL as an intermediate step.

\begin{theorem}[Convergence of the confirmation time via a diffusion limit] 
\label{thm:diffusion_limit}
Consider $\Q^{(n)}(t)$ and $\M^{(n)}(t)$ from \eqref{eq:defQn} and \eqref{eq:defMn} respectively
and assume $\nu\expec[X]<K\lambda$. Take $\sigma=\sqrt{t\lambda}$ and set $n=\sigma^2K^2/(\lambda K-\nu\expec[X])$ and assume that $m(n)$ is such that
\begin{equation}
\label{eq:diffusion_initial_cond}
    \lim_{n\to\infty} 
    \frac{1}{n\sqrt{n}}
    \Q^{(n)}(0) = \sigma K y,
    \qquad 
    \text{ and }
    \qquad
    \lim_{n\to\infty} 
    \frac{1}{n\sqrt{n}}
    \M^{(n)}(0) = \sigma K y.
\end{equation}
    Furthermore, let $T^*_{y,c}$ be the confirmation time of a Brownian motion
    $B_{y,c}(t)$
    starting in $y>0$ with drift $c<0$ and with variance $1$. Then for $n\to\infty$, we find
    \begin{equation}
        T_{
        \frac{Q^{(n)(nt)}}{
        (K\sigma n\sqrt{n})} }
        \xrightarrow{d} 
        T^*_{y,-1 },
        \qquad 
        \text{ and }
        \qquad
        T_{
        \frac{\expec[X]M^{(n)}(nt)}{
        (K\sigma n\sqrt{n})} 
        }\xrightarrow{d} 
        T^*_{y,-1 }.
    \end{equation}
\end{theorem}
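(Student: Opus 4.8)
The plan is to derive a functional central limit theorem (FCLT) for the diffusion-rescaled workload and count processes on a fixed time window, and then to transfer this convergence to the corresponding first-passage (confirmation) times by a continuous-mapping argument applied to the first-passage functional. As the excerpt observes, one could alternatively chain the fluid limit of Theorem~\ref{thm:fluidlimit} with a known Cram\'er--Lundberg-to-Brownian heavy-traffic result; I would instead work directly with the BSQ processes $\Q^{(n)}$ and $\M^{(n)}$, which is cleaner and is what the statement asks for. Throughout I treat the two claims separately, since each is a one-dimensional (marginal) weak convergence and no joint limit of $\Q^{(n)}$ and $\M^{(n)}$ is needed.

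\textbf{Step 1: FCLT for the rescaled processes.} Decompose the workload process as $\Q^{(n)}(nt) = \Q^{(n)}(0) + \sum_{i=m(n)+1}^{m(n)+A_{\nu n}(nt)} X_i - nK\,D_\lambda(nt) + \sum_{j=1}^{D_\lambda(nt)} U_j(nK)$. The middle two terms form the difference of a compound Poisson process and an independent scaled Poisson process; for these the FCLT follows from Donsker's invariance principle for i.i.d.\ sums composed with the FCLT for the Poisson counting process, and the parameter coupling $n=\sigma^2K^2/(\lambda K-\nu\expec[X])$ together with the time scaling by $n$ is arranged precisely so that the centred drift converges to $-1$ and the fluctuation variance to $1$ after dividing by $K\sigma n\sqrt n$; the rescaled jump sizes $X_i/(n\sqrt n)$ and $K/\sqrt n$ vanish, and $\Q^{(n)}(0)/(K\sigma n\sqrt n)\to y$ by \eqref{eq:diffusion_initial_cond} (which also absorbs the fact that the first $m(n)$ transactions need not be i.i.d.). It then remains to show the boundary-correction term $\sum_{j\le D_\lambda(nt)} U_j(nK)$ is negligible at this scale: since $U_j(nK)$ is the undershoot at level $nK$ of the renewal process generated by the i.i.d.\ weights, renewal theory gives $\expec[U_j(nK)]$ and $\mathrm{Var}(U_j(nK))$ bounded uniformly in $n$ (this needs $\expec[X^2]<\infty$, which I would add to the hypotheses or read off from the standing assumptions), whence $\sum_{j\le D_\lambda(nt)} U_j(nK) = O_{\mathbb P}(n)$ uniformly on compact $t$-sets, i.e.\ $o_{\mathbb P}(n\sqrt n)$. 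Combining, $\Q^{(n)}(n\cdot)/(K\sigma n\sqrt n)\Rightarrow B_{y,-1}(\cdot)$ in $D[0,\infty)$ with the $J_1$ topology. The count process is identical in structure: $\M^{(n)}(nt)=\M^{(n)}(0)+A_{\nu n}(nt)-\sum_{j\le D_\lambda(nt)}B_j(Kn)$, with $B_j(Kn)$ satisfying $\expec[B_j(Kn)]=Kn/\expec[X]+O(1)$ and uniformly bounded variance by the elementary renewal theorem; the same Donsker argument after multiplying by $\expec[X]$ gives $\expec[X]\M^{(n)}(n\cdot)/(K\sigma n\sqrt n)\Rightarrow B_{y,-1}(\cdot)$.

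\textbf{Step 2: from processes to confirmation times.} Let $\psi(x)=\inf\{t>0:x(t)\le 0\}$ on $D[0,\infty)$. It is classical (e.g.\ \cite{Whitt2002}) that $\psi$ is $J_1$-continuous at every path that, in every right-neighbourhood of $\psi(x)$, takes a value strictly below $0$. A Brownian motion with negative drift started at $y>0$ has this property almost surely: $T^*_{y,-1}<\infty$ a.s.\ and the path is a.s.\ strictly negative immediately after its first zero. Hence $B_{y,-1}$ lies a.s.\ in the continuity set of $\psi$, and the continuous mapping theorem combined with Step~1 yields $\psi\!\big(\Q^{(n)}(n\cdot)/(K\sigma n\sqrt n)\big)\xrightarrow{d}\psi(B_{y,-1})=T^*_{y,-1}$, and likewise for the count process. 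Since $\psi$ applied to the rescaled process is exactly $T_{\Q^{(n)}(n\cdot)/(K\sigma n\sqrt n)}$ (a first passage in rescaled time), and by \eqref{eq:Q=Q*}--\eqref{eq:M=M*} these coincide with the confirmation times of $\Q^*$ and $\M^*$, this establishes both assertions.

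\textbf{Expected main obstacle.} The crux is the asymptotic negligibility in Step~1 of the boundary terms $U_j(nK)$ and $B_j(Kn)$: although each is only $O_{\mathbb P}(1)$ in size, they are dependent on the arrival stream and accumulate $\Theta(n)$ times over the window, so the argument requires uniform-in-$n$ second-moment control on the undershoot of a renewal process whose target level diverges --- hence a genuine $\expec[X^2]<\infty$ input that is absent from the bare hypotheses as stated. The continuity of $\psi$ in Step~2 is comparatively routine, but still needs the short verification that the limiting Brownian path a.s.\ avoids the measure-zero set of paths on which $\psi$ jumps.
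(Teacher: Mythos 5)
Your overall route is the same as the paper's: establish process-level convergence of the diffusion-rescaled $\Q^{(n)}$ and $\expec[X]\M^{(n)}$ to $B_{y,-1}$ (the paper's Lemmas \ref{lem:dif conv workoad} and \ref{lem:dif conv cust count}) and then transfer to the hitting times via continuity of the first-passage functional at a.s.\ every path of a drifted Brownian motion (the paper packages this as Proposition \ref{prop:Skorohod conv implies hitting}). Your Step 2 and your treatment of the workload process are in line with the paper: there the boundary term $\sum_{j\le D_\lambda(nt)}U_j(nK)$ really is a sum of $\Theta(n)$ terms whose \emph{first} moments are uniformly bounded (by renewal theory, with limit $\expec[X^2]/(2\expec[X])$, so your observation that a second-moment condition on $X$ is implicitly used is fair), and a Markov bound gives $O_{\mathbb P}(n)=o_{\mathbb P}(n\sqrt n)$.

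The genuine gap is in your treatment of the transaction count process. You assert that $B_j(Kn)$ has $\expec[B_j(Kn)]=Kn/\expec[X]+O(1)$ and ``uniformly bounded variance by the elementary renewal theorem,'' and in your closing paragraph you describe the boundary terms as ``only $O_{\mathbb P}(1)$ in size.'' This is false for $B_j(Kn)$: it is the number of renewals of the weight process needed to reach level $Kn$, so by the renewal central limit theorem $B_j(Kn)-Kn/\expec[X]$ has standard deviation of order $\sqrt n$ (variance $\asymp nK\,\mathrm{Var}(X)/\expec[X]^3$), not $O(1)$. Hence the ``$O_{\mathbb P}(1)$ terms accumulated $\Theta(n)$ times'' argument that works for $U_j(nK)$ does not apply here, and this is exactly the point the paper singles out as non-trivial in Section \ref{app:dif proof cust count}. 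The conclusion is still salvageable: one shows $(B_j(Kn)-Kn/\expec[X])/\sqrt n$ converges to a centred normal with variance of constant order, and that the aggregated term
\begin{equation*}
  \frac{1}{n\sqrt n}\sum_{j=1}^{D_\lambda(nt)}\Bigl(B_j(Kn)-\frac{Kn}{\expec[X]}\Bigr)
\end{equation*}
vanishes in probability --- the paper does this via a transform computation, exploiting that the fluctuations over $\approx\lambda nt$ (essentially independent) blocks aggregate to $O_{\mathbb P}(n)=o_{\mathbb P}(n\sqrt n)$. As written, however, your justification for the negligibility of this term is incorrect, so the second half of Step 1 (and with it the second assertion of the theorem) is not yet proved by your argument.
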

The result follows from the diffusion limit that shows convergence of the BSQ to the BM, see Sections \ref{app:dif proof workload} and \ref{app:dif proof cust count} for more details and the proof. 
It is well known that the time for a BM to hit a certain level follows an inverse Gaussian distribution, see \cite{Folks1978}. 
Moreover, one can rescale time to find an approximation for $T_{\Q^{(n)}/(nK)}$ for $n$ large and $\lambda K \approx \nu \expec[X]$.
However, if one does so, an appropriate correction to account for the undershoot of the CL should be implemented, c.f. \cite[Chapter V6]{asmussen2020}.
For details on the confirmation time of the BM and the correction we refer to \cite{Gundlach2021} and Remark \ref{rem:diffusionapprox}.

\begin{remark}[Correction for the undershoot]
\label{rem:diffusionapprox}

Under the assumptions of Theorem \ref{thm:diffusion_limit} one can scale time and include a correction for the expected undershoot to find 
\begin{equation}
 \label{eq:conftimedistradj}
    T_{\Q^{(n)}/(Kn)} \approx
 \text{IG}\left(\frac{y+\mathbb{E}[S_{y}]}{1-c}, (y+\mathbb{E}[S_{y}])^2\right).
\end{equation}
We find therefore a practical estimation for the expected confirmation time by
\begin{equation}
\label{eq:E(t)approx}
    \mathbb{E}[T_y] \approx
     \frac{ y + \mathbb{E}[S_{y}]}{1-c}.
\end{equation}
Here $\mathbb{E}[S_{y}]$ denotes the expected undershoot below level 0 of a CL model with parameters $y$ and $c$. The expected undershoot $\mathbb{E}[S_{y}]$ satisfies, for all $y\geq 1$ non-integer-valued,
\begin{align}
\mathbb{E}[S_{y}] &=\int_{0}^{\frac{\lceil y \rceil-y}{c}} \mathbb{E}[S_{y+cx-1}]e^{-x}\mathrm{d}x+e^{-\frac{\lceil y\rceil -y}{c}} \mathbb{E}[S_{\lceil y \rceil}],
\end{align}
for  $y\geq 1$ integer-valued ($y\in \mathbb{N}$),
\begin{align}
\mathbb{E}[S_{y}]&=\int_{0}^{\frac{1}{c}} \mathbb{E}[S_{y+cx-1,c}]e^{-x}\mathrm{d}x+e^{-\frac{1}{c}} \mathbb{E}[S_{y+1}]
\end{align}
and, for $y\in[0,1]$,
\begin{align}
\mathbb{E}[S_{y}]&=1-y-c+e^{-\frac{1-y}{c}}\left(c+\mathbb{E}[S_{1}]\right).
\end{align}

We refer the interested reader to \cite{Gundlach2021} for more details on the approximation approach and we refer to Section \ref{sec:results}, for an indication on the accuracy of this approximation. Note that even for $c$ further away from 1, this approximation performs relatively well.
\end{remark}

\section{Confirmation times in the CL model}
\label{sec:math_analysis}
\vspace{0.5cm}
\subsection{Proof of Proposition \ref{cor:T(y,c)density} -- Density of the confirmation time}
\label{sec:cor1}
A more general version of Proposition  \ref{col:Conftimedens} for the probability density function of the time to ruin, with  a proof, is given in \cite[Section 8.6]{Dickson2016}. For the situation at  hand, due primarily  to the deterministic block sizes, $F_B(x)=\1\{x\geq 1\}$, the resulting expression is more compact and the proof is significantly shorter and easier. Moreover, the intuition behind each term in Equation \eqref{eq:densityut} and Equation \eqref{eq:densityt0} is provided.

\begin{proof}[Proof of Proposition  \ref{col:Conftimedens}.]

Let us start with a proof of Equation \eqref{eq:densityut}.
We distinguish between the case $y+ct<1$ and the case $y+ct \geq 1$. For the former case, a confirmation within $[t,t+dt]$ is attained if and only if the first block arrives in $[t,t+dt]$, which happens with probability $\e^{-t} dt$. Next, we focus on the case $y+ct \geq 1$. Note that, due to the fact that block sizes are deterministic, in order to have a confirmation in $[t,t+dt]$, exactly $\lfloor y+ct\rfloor$ blocks need to arrive before time $t$, in order to have $Y(t)\in(0,1)$.
Recall that $Z(t)$ is a standard Poisson process with mean inter-arrival time equal to 1, then
\begin{equation*}
\begin{aligned}
&\mathbb{P} [T_y \in [t,t+dt]\ ] \\
&=
\mathbb{P} [Z(t) = \lfloor y+ct\rfloor,  Y(u) \geq 0 \text{ for all $u<t$}, \text{block arrival in } [t,t+dt]\ ]\\
&= \mathbb{P} [Z(t) = \lfloor y+ct\rfloor,\text{ block arrival in } [t,t+dt]\ ]\\
&\quad -
\mathbb{P} [Z(t) = \lfloor y+ct\rfloor, \text{ process $Y(t)$ crossed the $x$-axis before time $t$}, \text{block arrival in } [t,t+dt]\ ].
\end{aligned}
\end{equation*}
Due to the memoryless property of the Poisson process, the expression for the first part is given by
\begin{align}
\mathbb{P} [Z(t) &= \lfloor y+ct\rfloor,\text{ block arrival in } [t,t+dt]\ ] \nonumber\\
&=
\e^{-t} \frac{ t^{\lfloor y+ct\rfloor}}{\lfloor y+ct\rfloor!}
\mathbb{P} [\text{block arrival in } [0,dt]\ ] = \e^{-t} \frac{ t^{\lfloor y+ct\rfloor}}{\lfloor y+ct\rfloor!} dt.
\label{eq:densityutanycross}
\end{align}
It remains to find
\begin{equation*}
\mathbb{P} [Z(t) = \lfloor y+ct\rfloor, \text{ process $Y(t)$ crossed the $x$-axis before time $t$}, \text{block arrival in } [t,t+dt]\ ].
\end{equation*}
Distinguish between a \textit{down-crossing} at time $v$, denoting an instant at which the process $Y(t)$ crosses the $x$-axis from above, and an \textit{up-crossing} at time $w$, denoting a subsequent instant at which the process $Y(t)$ crosses the $x$-axis from below again $(0<v<w<t)$. Note that multiple up- and down-crossings can occur.
A key notion to the proof is that, while down-crossings can occur at any time, up-crossings can only occur on specific times due to the constant increase of the process $Y(t)$ and the fact that the sizes of downward jumps are always equal to $1$.
Denote these possible up-crossing times by the set $\mathcal{V}$, given by
\begin{equation}
\mathcal{V} =
\left\lbrace
\frac{ \lceil y\rceil -y}{c}
, \frac{\lceil y\rceil -y+1}{c}
,\ldots ,
\frac{\lfloor y+ct\rfloor  - y}{c}
\right\rbrace .
\end{equation}
Condition on the last up-crossing and note that from there the process $Y(t)$ starts, independent of the past, again as a process with initial level 0. We can therefore write
\begin{align}
\mathbb{P} [Z(t) =& \lfloor y+ct\rfloor, \text{ process $Y(t)$ crossed the $x$-axis before time $t$}, \text{block arrival at } [t,t+dt]\ ] \nonumber\\
&=
\sum_{v\in\mathcal{V}}
\mathbb{P} [Z(t) = \lfloor y+ct\rfloor,\text{ Last up-crossing at } v, \text{ block arrival at } [t,t+dt]\ ]\nonumber\\
&=
\sum_{v\in \mathcal{V}}
\mathbb{P} [\text{Up-crossing at } v, \ T_0 \in [t-v,t+dt-v]]\nonumber\\
&=
\sum_{v\in \mathcal{V}}
f_{T_0}(t-v) \e^{-v} \frac{v^{cv+y}}{(cv+y)!}dt \label{eq:densityutupcros}
\end{align}
where the last step follows from the fact that in order to have an up-crossing at $v$, exactly $cv+y$ blocks need to have arrived before time $v$.
Combining the results of Equation \eqref{eq:densityutanycross} and Equation \eqref{eq:densityutupcros}, determines the density (after scaling with $dt$ and taking the limit of $dt\to0$) in the case where $y+ct \geq 1$. This completes the proof of  Equation \eqref{eq:densityut}.

It remains to prove Equation \eqref{eq:densityt0}. Again, the case $ct<1$ follows directly, and we therefore focus on the case $ct \geq 1$. We use again the property that before time $t$, exactly $\lfloor ct\rfloor$ blocks need to have arrived, in order to have that $Y(t^-) \in (0,1)$ and a confirmation can occur at time $[t,t+dt]$. Hence,
\begin{align*}
	&\mathbb{P} [ T_0 \in [t,t+dt] ] \\
	& =
	\mathbb{P} [ Z(t)=\lfloor ct\rfloor , \text{ process $Y(t)$ does not cross $x$ axis before $t$}, \text{block arrival at } [t,t+dt]\ ] \\
	&=
\mathbb{P} [\text{ process $Y(t)$ does not cross $x$ axis before $t$} ]\mid Z(t)=\lfloor ct\rfloor\ ] \e^{-t}
	\frac{ t^{\lfloor ct \rfloor}}{\lfloor ct\rfloor! } dt,
	\end{align*}
	and it is sufficient to show
	\begin{equation}
	\mathbb{P} [\text{ Process $Y(t)$ does not cross $x$ axis before $t$}\mid  Z(t)=\lfloor ct\rfloor\ ] = \frac{ ct - \lfloor ct\rfloor }{ct}.
	\end{equation}
The event that the process $Y(t)$ does not cross the $x$-axis before time $t$ is equivalent to the event that the Poisson process $Z(t)$ is not crossing the line $y=ct$ before time $t$. Therefore, we are interested in
		\begin{equation*}
		\mathbb{P} [Z(v)< cv,\ v\in(0,t)\mid Z(t) = \lfloor ct \rfloor ].
		\end{equation*}
		In case $c=1$ one could directly apply the continuous-time ballot theorem, see for example \cite{Konstantopoulos1995}. 
		Otherwise, define $Z^*(t)=Z(t)/c$, then
		\begin{equation*}
		\mathbb{P} [Z(v)< cv,\ v\in(0,t)\mid Z(t) = \lfloor ct \rfloor )=
		\mathbb{P} \left[ Z^*(v)<v,\ v\in(0,t)\mid Z^*(t)=\frac{\lfloor ct\rfloor}{c}\right]=1-\frac{\lfloor ct\rfloor}{ct},
		\end{equation*}
		from which the claim follows.
\end{proof}

\subsection{Proof of Proposition \ref{prop:DM1BP} -- Distribution of the number of blocks to confirmation}
\label{sec:prop2}
We continue with the proof of Proposition \ref{prop:DM1BP} and derive the tail distribution of the number of blocks to confirmation.

\begin{proof}[Proof of Proposition \ref{prop:DM1BP}]
Let $k_0$ be the number of block arrivals in the time interval $(0,\tfrac{1-\varepsilon}{c})$ and $k_i$ be the number of block arrivals in the time interval $(\tfrac{i-\varepsilon}{c}, \tfrac{i+1-\varepsilon}{c})$ for $i \geq 1$. Then in order to have that $N_y > n+m$ we should have that $\vec k_n\in K_n(m)$. Furthermore, for a Poisson process with rate $1$ we have that the probability that $k_0$ arrivals occur in the interval $(0,\tfrac{1-\varepsilon}{c})$ and $k_i$ arrivals occur in the interval $(\tfrac{i-\varepsilon}{c}, \tfrac{i+1-\varepsilon}{c})$ for $i=1,\ldots,n-1$ is equal to
\begin{equation}
\e^{-\frac{1}{c}(1-\varepsilon)}\frac{((1/c)(1-\varepsilon))^{k_0}}{k_0!} \prod_{i=1}^{n-1} \left( \e^{-\frac{1}{c}}\frac{(1/c)^{k_i}}{k_i!} \right) = \e^{-\frac{1}{c}(n-\varepsilon)}
     \frac{ (1/c)^{k_1+\ldots +k_{n-1}} }{k_0!\cdots k_{n-1}!}
     ((1/c)(1-\varepsilon))^{k_0}.
\end{equation}
Hence, the proposition follows.
\end{proof}

We note, however, that similar results can be found in the field of queueing theory. To do so, one needs to formalise the duality between the CL model and a queueing model, which we do in Appendix \ref{app:Alternative proof Exp conf time}. Based on this duality, an alternative proof is derived as well.

\subsection{Proof of Proposition \ref{prop:ET} -- Mean confirmation time}
\label{sec:prop3}
Next, we consider the mean confirmation time. First, we argue that, due to the scaling of time and space, the mean confirmation time in continuous or discrete time is the same:

\begin{theorem}[Expected time to confirmation]\label{lem:T=N}
Consider the process $\{Y(t)\}_{t\geq0}$ as defined in Equation \eqref{eq:def-Y(t)}. Let $T_y$ be the  confirmation time defined in Equation  \eqref{eq:def-confirmation} and  let $N_y := \inf\left\{k\geq 1: y+c\sum_{i=1}^k A_i< k\right\}$ denote  the number of blocks to confirmation. Then
\begin{equation}
\label{eq:T=N}
    \mathbb{E}[N_y]=\mathbb{E}[T_y].
\end{equation}
\end{theorem}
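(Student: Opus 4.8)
The plan is to exploit the fact that, in the chosen scaling, block arrivals form a rate-$1$ Poisson process, so the inter-block times $A_1, A_2, \ldots$ are i.i.d.\ $\mathrm{Exp}(1)$ with $\expec[A_i]=1$. The number of blocks to confirmation $N_y$ is precisely the number of blocks that arrive up to the ruin time $T_y$; equivalently, $N_y = Z(T_y)$ in the notation of Equation \eqref{eq:def_Ny}, and $T_y = \sum_{i=1}^{N_y} A_i$ since $T_y$ coincides with the $N_y$-th block arrival epoch (the process hits $0$ exactly at a downward jump). So the statement $\expec[T_y]=\expec[N_y]$ is exactly an instance of a Wald-type identity: $\expec\big[\sum_{i=1}^{N_y} A_i\big] = \expec[N_y]\,\expec[A_1] = \expec[N_y]$.

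First I would verify that $N_y$ is a stopping time with respect to the natural filtration generated by the inter-block times $(A_i)_{i\ge1}$: whether $N_y > k$ depends only on $A_1,\dots,A_k$, since the event $\{N_y>k\}$ is $\{y + c\sum_{i=1}^j A_i \ge j \text{ for all } j\le k\}$, which is $\sigma(A_1,\dots,A_k)$-measurable. Next I would check the integrability hypothesis of Wald's identity, namely $\expec[N_y]<\infty$. This follows from the stability assumption (in the CL parametrization, the drift condition $c<1$, equivalently $\nu\expec[X]<K\lambda$ after scaling): a standard argument shows the time to ruin has finite mean — e.g.\ via the explicit recursive expression in Proposition \ref{prop:ET}, or by a direct supermartingale/renewal-reward argument bounding $\expec[N_y]$ geometrically. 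With $N_y$ a stopping time of finite mean and the $A_i$ i.i.d.\ with finite mean, Wald's equation applies and yields
\begin{equation}
\expec[T_y] = \expec\Big[\sum_{i=1}^{N_y} A_i\Big] = \expec[N_y]\,\expec[A_1] = \expec[N_y].
\end{equation}

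The only genuine obstacle is establishing $\expec[N_y]<\infty$ cleanly without circularity — one should not invoke $\expec[T_y]<\infty$ if that itself is proved via this theorem. I would handle this directly: under $c<1$, the random walk $S_k := y + c\sum_{i=1}^k A_i - k$ has negative drift $\expec[S_{k+1}-S_k] = c-1 < 0$, so it drifts to $-\infty$ a.s.; since its increments $cA_i - 1$ have a finite moment generating function in a neighbourhood of $0$ (being a shifted exponential), the first passage time of $S_k$ below $0$ has a finite (indeed exponentially bounded) mean by standard large-deviation/renewal estimates. This gives $\expec[N_y]<\infty$, and hence $\expec[T_y]<\infty$ as a byproduct, closing the argument. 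The remaining identity $\expec[N_y]=\expec[T_y]$ from Proposition \ref{prop:ET} is then immediate.
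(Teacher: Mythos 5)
Your proposal is correct and follows essentially the same route as the paper: write $T_y=\sum_{i=1}^{N_y}A_i$, note $N_y$ is a stopping time with respect to the i.i.d.\ unit-mean inter-block times, and apply Wald's identity. The only difference is that you spell out the finiteness of $\expec[N_y]$ via the negative-drift random walk argument, whereas the paper simply asserts it from $c<1$; this is a welcome extra detail but not a different proof.
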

\begin{proof}
The relation between $N_y$ and $T_y$ is given by
   \begin{equation}
       T_y = \sum^{N_y}_{i=1}A_i.
   \end{equation}
Note that $N_y$ is a stopping time with respect to $A_i$, as confirmation occurs directly after a block has been mined. Thus, the event $\{N_y=n\}$ is independent of $\{A_{n+i}\}_{i\geq 1}$. Furthermore, as $c<1$, $\mathbb{E}[N_y]<\infty$ and $\mathbb{E}[A_i]=1$, all properties of Wald's identity (\cite{Wald1945}) are satisfied yielding $\mathbb{E}[T_y]=\mathbb{E}[A_i]\mathbb{E}[N_y]$, which directly proves the claim.
\end{proof}
The time to ruin of the CL model has been studied before in the literature. We adapt this result for starting position $y\in\mathbb{N}$ as follows:

\begin{theorem}[Expected time to confirmation for integer valued $y$.]
\label{prop:ET_N}
Consider the process $\{Y(t)\}_{t\geq0}$ as defined in Equation \eqref{eq:def-Y(t)}. Let $T_y$ be the  confirmation time defined in Equation  \eqref{eq:def-confirmation}. Then, for $y\in \mathbb{N}$, the expected time to confirmation is given by
    \begin{equation}
    \label{eq:E(T)}
    \mathbb{E}[T_y]=
    y+1 + \sum_{i=0}^y
    \frac{ \frac{e^{\frac{i}{c}}}{\rho c} \left(\frac{-i}{c}\right)^{y-i} - \Gamma(1+y-i,-i/c)}{(y-i)!},
    \end{equation}
    where $\rho$ is the (unique) positive solution of the equation
    $c \rho - 1 + \e^{-\rho} = 0$ and $\Gamma(s,x) = \int_{t=x}^{\infty} t^{s-1} e^{-t} dt$ is the upper incomplete gamma function.

\end{theorem}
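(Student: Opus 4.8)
The plan is to derive a renewal-type integral equation for $g(y) := \mathbb{E}[T_y]$ by conditioning on the time $A_1$ of the first block arrival, solve the resulting functional equation on the strip $y \in [0,1]$ via a differential equation, and then propagate the solution to integer $y$ by the recursion that links $g(y)$ to $g(y-1)$ shifted by the inter-block time. Concretely, conditioning on $A_1 = x$ (exponential with rate $1$), before this first jump the process has climbed to height $y + cx$; after the downward jump of size $1$ it restarts from $y + cx - 1$ provided that is still positive, and is confirmed otherwise. This yields, for $y \ge 1$,
\begin{equation}
g(y) = \int_0^{(1-y')/c}\!\!\bigl(x + g(y + cx - 1)\bigr)e^{-x}\,dx \; + \; e^{-(1-y')/c}\Bigl(\tfrac{1-y'}{c} + g(\lceil y\rceil)\Bigr),
\end{equation}
where $y'$ denotes the fractional part of $y$; this is exactly Equation \eqref{eq:E(T)alg}, and for $y = \varepsilon < 1$ the analogous conditioning gives Equation \eqref{eq:E(t)y<1}. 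So the first step is essentially to establish Proposition \ref{prop:ET}'s recursion, which I would take as given here since the theorem is phrased as an ``adaptation'' of a known result.

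Next I would solve the $y \in [0,1]$ case in closed form. Differentiating Equation \eqref{eq:E(t)y<1} (or the integral equation directly) in $y$ converts it to a first-order linear ODE; imposing the boundary condition $g(0) = 0$ together with the normalisation constant $\rho$ — the unique positive root of $c\rho - 1 + e^{-\rho} = 0$, which is the familiar adjustment/Lundberg-type exponent arising because $\mathbb{E}[T_y] < \infty$ forces the solution to not blow up — pins down $g$ on $[0,1]$, in particular the value $g(1)$. This is the analytic heart of the argument; the Lundberg exponent enters precisely as the decay rate that kills the homogeneous solution that would otherwise violate finiteness of the expectation.

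With $g(1)$ in hand, I would set up the integer recursion. Writing $y \in \mathbb{N}$ and iterating Equation \eqref{eq:E(T)alg} with $\varepsilon = 0$, each step contributes a term of the form $\int_0^{1/c}(x + g(y + cx - 1))e^{-x}dx + e^{-1/c}g(y+1)$; unrolling this down to the base case expresses $g(y)$ as a finite sum over $i = 0,\dots,y$. The summand for each $i$ splits into two pieces: one proportional to $\frac{e^{i/c}}{\rho c}\bigl(\tfrac{-i}{c}\bigr)^{y-i}/(y-i)!$, inherited from the $\rho$-dependent constant in $g$ on $[0,1]$, and one of the form $\Gamma(1 + y - i, -i/c)/(y-i)!$, which is what the nested exponential integrals $\int_0^{1/c} x^{j} e^{-x}\,dx$ telescope into once expanded via the incomplete gamma function. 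Collecting these, together with the additive $y + 1$ coming from the accumulated ``$x$'' drift terms $\int x e^{-x}dx$ across the $y+1$ layers of the recursion, yields Equation \eqref{eq:E(T)}.

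The main obstacle I anticipate is the bookkeeping in the last step: showing that the iterated integrals collapse cleanly into the single sum with the stated incomplete-gamma coefficients, and in particular verifying that the $\rho$-terms assemble with the right powers of $-i/c$ and factorials. A clean way to control this is to guess the closed form \eqref{eq:E(T)} and verify it satisfies the recursion \eqref{eq:E(T)alg} at $\varepsilon = 0$ by induction on $y$ — using the base case $\mathbb{E}[T_0] = 0$ and the identity $\Gamma(s+1,x) = s\,\Gamma(s,x) + x^s e^{-x}$ to pass from $y$ to $y+1$ — which sidesteps the need to literally evaluate the nested integrals. I would also double-check the defining equation for $\rho$ is used consistently: the excerpt writes it as $c\rho - 1 + e^{-\rho} = 0$ in the theorem but as $cy - 1 + e^{-y} = 0$ in Proposition \ref{prop:ET}, and one should confirm these name the same quantity.
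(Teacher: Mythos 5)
Your route is genuinely different from the paper's, but it has a gap at exactly the point you call the analytic heart. The paper does not derive \eqref{eq:E(T)} from the recursion at all: it quotes the closed-form first moment of the time to ruin with deterministic claims from \cite{Picard1998}, namely $\mathbb{E}[T_y]=\frac{1}{c\rho}L(y)+\sum_{r=0}^y(1-L(y-r))$, and obtains \eqref{eq:E(T)} by substituting $L(y)$, interchanging the sums and using $\Gamma(s,x)=(s-1)!\,e^{-x}\sum_{k=0}^{s-1}x^k/k!$. In your plan, the determination of the $\rho$-dependent constant, i.e.\ of $g(1)=\mathbb{E}[T_1]$, is not actually achieved. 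On $[0,1)$, conditioning on the first block arrival gives $g(y)=\int_0^{(1-y)/c}xe^{-x}dx+e^{-(1-y)/c}\bigl(\tfrac{1-y}{c}+g(1)\bigr)$, which already expresses $g$ on the strip in terms of the single unknown $g(1)$; differentiating yields no new information, and the boundary condition $g(0)=0$ you invoke is not available: a process started at level $0$ must still wait for the first block, so $\mathbb{E}[T_0]>0$ (the ``$\mathbb{E}[T_0]=0$'' of Proposition \ref{prop:ET} is only the convention that a down-jump landing at or below $0$ means immediate confirmation). The value $\mathbb{E}[T_1]=\tfrac{1}{c\rho}e^{1/c}+1-e^{1/c}$ is selected by a global condition (linear rather than exponential growth of $\mathbb{E}[T_y]$ as $y\to\infty$, which is where the root $\rho$ of $c\rho-1+e^{-\rho}=0$ enters via a transform/Lundberg-type argument); nothing confined to $[0,1]$ can produce it. Note also that you cannot take Equation \eqref{eq:E(t)y<1} ``as given'': in the paper it is obtained by substituting $\mathbb{E}[T_1]$ from the very formula \eqref{eq:E(T)} being proved, so relying on it here would be circular.

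The propagation step is also not sound as described. The recursion \eqref{eq:E(T)alg} at integer $y$ contains the term $e^{-1/c}\,\mathbb{E}[T_{y+1}]$, i.e.\ it refers \emph{upward} to the next integer level, as well as to the continuum of values $\mathbb{E}[T_{y+cx-1}]$ for $x\in(0,(1-\varepsilon)/c)$; it therefore does not ``unroll down to the base case'' and does not collapse by itself into a finite sum over $i=0,\dots,y$. Your fallback (guess \eqref{eq:E(T)} and verify the recursion by induction) meets the same two obstacles: the verification needs a closed form at non-integer arguments as well, and since the system of conditioning identities admits a one-parameter family of solutions (parametrised by $g(1)$, equivalently by the coefficient of an exponentially growing homogeneous term), showing that a candidate satisfies the recursion does not identify it with $\mathbb{E}[T_y]$ unless you also prove a uniqueness or growth statement. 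Either supply that global argument (e.g.\ a transform computation reproducing the Picard--Lef\`evre formula), or follow the paper and cite \cite{Picard1998}, after which only the algebraic reshuffling into \eqref{eq:E(T)} remains. Your side remark is correct: the equation ``$cy-1+e^{-y}=0$'' in Proposition \ref{prop:ET} is the same Lundberg-type equation with the letter $y$ reused, defining the same $\rho$.
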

\begin{proof}
   In \cite{Picard1998}, the first moment is computed
\begin{equation}
    \mathbb{E}[T_y]=
    \frac{1}{c\rho}L(y) + \sum_{r=0}^y (1 - L(y-r))
\end{equation}
with
\begin{equation}
    L(y) =
    \sum_{i=0}^y e^{\frac{ i}{c}}
    \frac{\left(\frac{-i}{c}\right)^{y - i}}{(y - i)!}.
\end{equation}
Substituting $L(y)$ and interchanging the sums in the right-hand side expression, simplifies the expression to Equation \eqref{eq:E(T)}. Here we used the fact that, for a positive integer $s$, the upper incomplete Gamma function is equal to
\begin{equation}
\Gamma(s,x) = (s-1)! e^{-x} \sum_{k=0}^{s-1} \tfrac{x^k}{k!}.
\end{equation}
\end{proof}

Next, we extend this result in order to find the mean confirmation time for any $y\in\mathbb{R}^+$
and thereby proving Proposition \ref{prop:ET}.

\begin{proof}[Proof of Proposition \ref{prop:ET}]
Suppose that $y=m+\varepsilon$, where $m\in\mathbb{N}$ and $\varepsilon\in(0,1)$, then by conditioning on the first block arrival instant,
\begin{equation}\label{eq:E(T)alg_proof}
\mathbb{E}[T_y] =
   \int_0^{(1-\varepsilon)/c}
   (x+\mathbb{E}[T_{y+cx-1}]) \e^{-x}dx +
   \e^{-\frac{1-\varepsilon}{c}} \left(\tfrac{1-\varepsilon}{c}+ \mathbb{E}[T_{m+1}]\right).
\end{equation}
This proves Equation \eqref{eq:E(T)alg}.
In case $y=\varepsilon<1$, 
we used that $\mathbb{E}[T_{y+cx-1}] =0$ for $x < (1-y)/c$ and that $\mathbb{E}[T_1] = \tfrac{1}{c\rho} e^{1/c} + 1 - e^{1/c}$ which follows from substituting $y=1$ in Equation \eqref{eq:E(T)}.
Equation \eqref{eq:E(T)} reduces then to Equation \eqref{eq:E(t)y<1} which concludes the result.
\end{proof}

Equations \eqref{eq:E(T)alg}, \eqref{eq:E(t)y<1} and
\eqref{eq:E(T)} can be used to determine $\mathbb{E}[T_y]$ in an iterative manner. The case $y\in[0,1]$ is covered by Equation \eqref{eq:E(t)y<1}. With this, the case $y=2$ follows from Equation \eqref{eq:E(T)} and the case
$1<y<2$ follows from Equation \eqref{eq:E(T)alg}. The case $y=3$ and $2<y<3$ follows again from Equation \eqref{eq:E(T)} and Equation \eqref{eq:E(T)alg} respectively, and so on.
This directly implies that we can obtain the expected confirmation time for an arbitrary positive value of $y$. 

\section{Fluid and diffusion limits for the BSQ}
\label{sec:fluidlimit proof}
In the following we provide more details on the claims made in Section \ref{sec:queueing_model} and prove Theorems \ref{thm:fluidlimit} and \ref{thm:diffusion_limit}.
As  $Q^{(n)}(t)$ and $M^{(n)}(t)$ in Equation \eqref{eq:defQn} and Equation \eqref{eq:defMn} respectively are c\`adl\`ag (right-continuous and have a limit from the left), convergence of these  processes can be shown in the Skorohod $D[0,\tau]$ space, for some $\tau>0$ \cite[Chapter 3]{Billingsley1999}.
This type of convergence is especially powerful as, if one can show Skorohod convergence (or processes convergence), denoted with $\xrightarrow{\mathcal{D}}$, then one can extend this to convergence in distribution of the confirmation times in a straightforward manner:
\begin{proposition}[Skorohod convergence implies convergence in distribution of the confirmation times]
\label{prop:Skorohod conv implies hitting}
    Consider an arbitrary process $(X_n(t))_{t\in[0,\tau]}$, and limiting process $(X(t))_{t\in[0,\tau]}$.
    Denote their respective confirmation times by $T_{X_n}$ and $T_{X}$. 
    Define $\mathcal{A}_\tau$,  the class of c\`adl\`ag functions that have a jump down at time $\tau$ and assume that $\prob(X\in \mathcal{A}_\tau)=0$
    and $\prob(T_X<\infty)=1$.
    If $(X_n(t))_{t\in[0,\tau]} \xrightarrow{\mathcal{D}} (X(t))_{t\in[0,\tau]}$ for arbitrary $\tau>0$, then 
    \begin{equation}
        T_{X_n} \xrightarrow{d}
    T_{X}.
    \end{equation}
\end{proposition}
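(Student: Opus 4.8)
The plan is to deduce convergence in distribution of the confirmation times from Skorohod convergence of the paths by exhibiting the first-passage (hitting) functional as an a.s.-continuous map on the Skorohod space $D[0,\tau]$, and then invoking the continuous mapping theorem together with a standard diagonal/truncation argument to pass from fixed $\tau$ to the full time horizon. Concretely, for a fixed $\tau>0$ define the truncated hitting time $\psi_\tau\colon D[0,\tau]\to[0,\tau]\cup\{\infty\}$ by $\psi_\tau(x)=\inf\{t\in[0,\tau]:x(t)<0\}$ (with $\psi_\tau(x)=\infty$ if the path stays nonnegative on $[0,\tau]$). The key lemma to establish is that $\psi_\tau$ is continuous at every path $x$ that (i) is not in $\mathcal A_\tau$ and (ii) is such that $x$ "crosses strictly below $0$" at time $\psi_\tau(x)$ — more precisely, for every $\delta>0$ there is $t\in(\psi_\tau(x),\psi_\tau(x)+\delta)$ with $x(t)<0$. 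I would first verify that, under the stated hypotheses $\prob(X\in\mathcal A_\tau)=0$ and $\prob(T_X<\infty)=1$, the limit process $X$ has property (ii) almost surely: this is where the structure of $X$ (a CL process or a Brownian motion, both of which have downward jumps or continuous oscillation so that a first passage below $0$ is genuinely a crossing, not a touch) is used — for the CL fluid limit the path decreases only by unit jumps and increases linearly, so the first time it is negative it has already jumped strictly past $0$; for the BM the strong Markov property and the fact that BM immediately goes below any level it hits give the claim.

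Next I would prove the continuity lemma itself using the definition of the Skorohod $J_1$ metric. Given $x_n\to x$ in $D[0,\tau]$, there are time-change homeomorphisms $\lambda_n$ of $[0,\tau]$ with $\|\lambda_n-\mathrm{id}\|\to0$ and $\|x_n\circ\lambda_n-x\|_\infty\to0$. For the $\liminf$ direction: if $x(t_0)<0$ for $t_0=\psi_\tau(x)$ or slightly after (using property (ii) to pick such a point strictly inside $(\psi_\tau(x),\psi_\tau(x)+\delta)$ where $x$ is bounded away from $0$, which exists by right-continuity and the crossing property), then for large $n$ we have $x_n(\lambda_n^{-1}(t_0))<0$, hence $\psi_\tau(x_n)\le \lambda_n^{-1}(t_0)\to t_0$, giving $\limsup_n\psi_\tau(x_n)\le\psi_\tau(x)+\delta$; let $\delta\downarrow0$. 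For the $\limsup$ direction: on $[0,\psi_\tau(x))$ the path $x$ is nonnegative; the only subtlety is that $x$ might touch $0$ from above before $\psi_\tau(x)$, which is exactly what condition $X\notin\mathcal A_\tau$ and the crossing condition are designed to handle — the assumption forces that any earlier contact with level $0$ is not followed by the path dipping below, so uniform closeness of $x_n\circ\lambda_n$ to $x$ keeps $x_n$ from going negative too early; thus $\liminf_n\psi_\tau(x_n)\ge\psi_\tau(x)$. Combining, $\psi_\tau(x_n)\to\psi_\tau(x)$, so $\psi_\tau$ is continuous on the (probability-one) set of good paths, and the continuous mapping theorem yields $\psi_\tau(X_n)\xrightarrow{d}\psi_\tau(X)$ for each fixed $\tau$.

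Finally I would remove the truncation. Write $T_{X_n}=\psi_\tau(X_n)$ on the event $\{T_{X_n}\le\tau\}$ and similarly for $X$; since $\prob(T_X<\infty)=1$, for any $\eta>0$ choose $\tau$ with $\prob(T_X>\tau)<\eta$, and also choose $\tau$ to be a continuity point of the distribution of $T_X$ (all but countably many $\tau$ qualify). For such $\tau$, $\prob(\psi_\tau(X)=\infty)=\prob(T_X>\tau)<\eta$, and by the established convergence $\psi_\tau(X_n)\xrightarrow{d}\psi_\tau(X)$ as elements of $[0,\tau]\cup\{\infty\}$; restricting to the event where both are finite and letting $\eta\downarrow0$ along a sequence of admissible $\tau\to\infty$ gives $T_{X_n}\xrightarrow{d}T_X$ on $[0,\infty)$ by a routine $\varepsilon$–$\delta$ argument on distribution functions (evaluate at continuity points $s$ of $F_{T_X}$, pick $\tau>s$, use $\prob(T_{X_n}\le s)=\prob(\psi_\tau(X_n)\le s)\to\prob(\psi_\tau(X)\le s)=\prob(T_X\le s)$).

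\textbf{Main obstacle.} The delicate point is the continuity lemma for $\psi_\tau$ on Skorohod space: the first-passage map is genuinely discontinuous at paths that only touch level $0$ without crossing, or that cross exactly at the endpoint $\tau$, so the entire argument hinges on showing that the limit process avoids these pathologies almost surely. Translating the hypotheses $\prob(X\in\mathcal A_\tau)=0$ and $\prob(T_X<\infty)=1$ into the "strict crossing" property — and doing so uniformly enough to control the earlier near-zero behaviour of $x$ under $J_1$ perturbations — is where the real work lies; everything after that (continuous mapping, de-truncation) is standard weak-convergence bookkeeping.
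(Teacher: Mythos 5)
Your route is genuinely different from the paper's: you work directly with the truncated first-passage functional $\psi_\tau$ on $D[0,\tau]$ and try to prove it is a.s. continuous at the limit paths, whereas the paper applies the (generalised) continuous mapping theorem to the running-infimum functional $f^-(x)=\inf_{t\in[0,\tau]}x(t)$ — which is essentially Lipschitz under the $J_1$ time-change distance, so no delicate path-level analysis of the hitting map is needed — and then reads off $\prob(T_{X_n}>\tau)\to\prob(T_X>\tau)$ for every $\tau$ before using $\prob(T_X<\infty)=1$ to conclude. The de-truncation step in your third paragraph is fine and is morally the same bookkeeping as the paper's last two lines.

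However, your key continuity lemma has a genuine gap. Conditions (i) ($x\notin\mathcal{A}_\tau$) and (ii) (strict crossing at $\psi_\tau(x)$) are not sufficient for continuity of $\psi_\tau$: the problematic paths are those that \emph{touch} level $0$ at some time $s<\psi_\tau(x)$ without going below. At such a touch point, uniform closeness of $x_n\circ\lambda_n$ to $x$ does \emph{not} "keep $x_n$ from going negative too early" — if $x(s)=0$ then $x_n(\lambda_n^{-1}(s))$ may be slightly negative, forcing $\psi_\tau(x_n)\approx s$ and destroying the lower-bound direction. Your assertion that the hypotheses $\prob(X\in\mathcal{A}_\tau)=0$ and $\prob(T_X<\infty)=1$ "are designed to handle" this is incorrect: neither hypothesis says anything about touching $0$ before $T_X$ (a deterministic continuous path such as $x(t)=|1-t|$ on $[0,2]$ continued downward shows that the statement is simply false without some such condition — so it must be supplied by the structure of the limit process, not by the stated hypotheses). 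What your argument actually needs is the additional a.s. property that $\inf_{[0,s]}X>0$ for all $s<T_X$ (equivalently, no contact with $0$ strictly before the first strict crossing, and, for the extended-valued case $\psi_\tau=\infty$, that $\inf_{[0,\tau]}X>0$ on $\{T_X>\tau\}$). This does hold for the CL fluid limit (downward unit jumps a.s. never land exactly on $0$) and for Brownian motion, and verifying it there would close the gap; note that the paper's own proof quietly relies on the analogous distributional fact $\prob(\inf_{[0,\tau]}X=0)=0$ when it converts convergence of the infimum into convergence of $\prob(T_{X_n}>\tau)$, so the extra verification is needed on either route — but in your version it must be stated and proved as part of the continuity lemma, and the specific justification you give for it is wrong as written.
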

\begin{proof}
We define the mapping $f^-:D[0,\tau] \to \mathbb{R}$ given by
\begin{equation}
    f^-\Big( \big(X(t)\big)_{t\in[0,\tau]}\Big) = \inf_{t\in[0,\tau]}\{X(t)\}.
\end{equation}
We aim to apply the continuous mapping theorem, but we note that $f^-$ is not necessarily continuous on $D[0,\tau]$. Specifically, the set of discontinuity points can be described by the set $\mathcal{A}_\tau$,  the class of c\`adl\`ag functions that have a jump down at time $\tau$.
However, as $\prob(X\in \mathcal{A}_\tau)=0$ by assumption,
we can apply the generalised continuous mapping theorem \cite[Theorem 3.4.4]{Whitt2002} and state that for arbitrary $\tau>0$
\begin{equation}
   \inf_{0\leq t\leq \tau}  X_n(t)
    \xrightarrow{d}
  \inf_{0\leq t\leq \tau}  X(t).
\end{equation}
This implies for fixed $\tau>0$ 
\begin{equation}
    \prob\Big( T_{X_n} > \tau 
    \Big)\to 
    \prob\Big( T_{X} > \tau 
    \Big).
\end{equation}
As $\tau$ can be arbitrarily large and $T_{X}<\infty$ by assumption, 
we recognise this as the tail distribution of $T_{X_n}$ and $T_{X}$ respectively. This implies convergence in distribution.
\end{proof}

This Proposition gives a clear road map on how we can prove Theorems \ref{thm:fluidlimit} and \ref{thm:diffusion_limit}, but showing Skorohod convergence of $\Q^{(n)}(t)/(Kn)$ and $\expec[X]\M^{(n)}(t)/(Kn)$ in the fluid limit and showing Skorohod convergence of $\Q^{(n)}(t)/(\sigma Kn\sqrt{n})$ and $\expec[X]\M^{(n)}(t)/(\sigma Kn\sqrt{n})$ is the main complication. 

The rest of this section is structured as follows. We start by providing a general proof scheme for all limiting statements in Section \ref{sec:proof_overview_limit}.
We then discuss every result separately, starting with the proofs for the fluid limit of the cumulative weight process in Section \ref{app:fluid proof workload}
and the transaction count process in Section \ref{app:fluid proof cust count}. This is followed by the proof for the diffusion limit of the cumulative weight process in Section \ref{app:dif proof workload} and the transaction count process in Section \ref{app:dif proof cust count}.

\subsection{Proof overview }
\label{sec:proof_overview_limit}
In the following we give a short outline of the ingredients needed to prove Theorem \ref{thm:fluidlimit} and \ref{thm:diffusion_limit}.
Furthermore, given these ingredients, we provide the proof of the aforementioned Theorems.
\paragraph{Fluid limit.}
Under specific assumptions, one  can often prove a fluid limit by showing pointwise convergence and then use the functional law of large numbers (c.f. \cite[Chapter 6]{Chen2001}). However, in our scenario, it is not clear if the functional law of large numbers applies, and we show Skorohod convergence manually. To do so, we introduce the following supplementary lemmas:

\begin{lemma}[Fluid limit for the cumulative weight process]
\label{lem:fluid conv workoad}
   Recall the scaled cumulative weight process $\Q^{(n)}$ from Equation \eqref{eq:defQn} 
   under the initial condition set in Equation \eqref{eq:fluid_initial_conditions}.
   Recall furthermore the limiting process $Y$ from Equation \eqref{eq:def-Y(t)}. Then for fixed $\tau>0$ and $0<t_1<\ldots<t_\ell<\tau$:
   \begin{itemize}
       \item[(a)] 
       \begin{equation}
       \label{eq:pw conv Q}
          \Big(\frac{1}{Kn} \Q^{(n)}(t_i)\Big)_{i=1}^\ell
          \xrightarrow{d} \Big(\Yl{y}(t_i)\Big)_{i=1}^\ell;
       \end{equation}
       \item[(b)]
       the process 
       $\frac{1}{Kn} \Q^{(n)}$
       is tight in the space $D[0,\tau]$.
   \end{itemize}
\end{lemma}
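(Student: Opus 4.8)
Our plan is to deduce both claims from the additive decomposition
\[
\frac{1}{Kn}\Q^{(n)}(t)
=\underbrace{\tfrac{1}{Kn}\Q^{(n)}(0)}_{=:a_n}
+\underbrace{\tfrac{1}{Kn}\sum_{i=m(n)+1}^{m(n)+A_{\nu n}(t)}X_i}_{=:C_n(t)}
-\underbrace{\tfrac{1}{Kn}\sum_{j=1}^{D_\lambda(t)}\bigl(nK-U_j(nK)\bigr)}_{=:J_n(t)},
\]
identifying the limit of each of the three pieces and recombining; write $c:=\nu\expec[X]/K$, so that $\Yl{y}(t)=y+ct-D_\lambda(t)$.

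For part (a) I would argue termwise. The constant $a_n\to y$ by the initial condition \eqref{eq:fluid_initial_conditions}. For $C_n(t)$: since $A_{\nu n}(t)$ is Poisson with mean $\nu nt$ we have $A_{\nu n}(t)/n\to\nu t$, and composing this with the law of large numbers for the i.i.d.\ weights $\{X_i\}$ (a random-index argument needing only $\expec[X]<\infty$, which is guaranteed by $\nu\expec[X]<K\lambda$) gives $C_n(t)\to ct$ in probability. For $J_n(t)$: the key point is that $D_\lambda(t)$ is \emph{not} rescaled, so it is simply a Poisson$(\lambda t)$ random variable, finite a.s.; moreover $U_j(nK)$ is bounded by the weight of a single transaction (indeed $U_j(nK)<K$ under $X\ll K$), so $J_n(t)=D_\lambda(t)-\tfrac{1}{Kn}\sum_{j=1}^{D_\lambda(t)}U_j(nK)$ with the correction term at most $D_\lambda(t)/n\to0$, whence $J_n(t)\to D_\lambda(t)$ in probability. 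Since two of the three summands converge in probability to constants, Slutsky's lemma yields $\tfrac{1}{Kn}\Q^{(n)}(t)\xrightarrow{d}y+ct-D_\lambda(t)=\Yl{y}(t)$; running the same computation jointly at $0<t_1<\dots<t_\ell<\tau$ — where the only randomness surviving in the limit is that of the unscaled Poisson process $(D_\lambda(t_i))_i$ — gives \eqref{eq:pw conv Q}.

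For part (b) I would again treat the three summands. The constants $a_n$ are tight. The process $C_n$ is, in distribution, $1/(Kn)$ times a compound Poisson process evaluated at time $nt$ with rate $\nu$ and jumps distributed as $X$; the functional law of large numbers for such processes gives $C_n\to ct$ uniformly on $[0,\tau]$, and since $C_n$ is nondecreasing with continuous limit this pointwise-to-uniform passage also follows from a P\'olya-type monotonicity argument, so $\{C_n\}$ is tight with continuous limiting paths. The process $J_n$ obeys $\sup_{t\le\tau}|J_n(t)-D_\lambda(t)|\le D_\lambda(\tau)/n\to0$, hence converges in $D[0,\tau]$ and is in particular tight (it has $D_\lambda(\tau)<\infty$ jumps, each of size in $(1-1/n,1]$). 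Since the sum of a sequence that is tight with continuous limit and a tight sequence is itself tight in $D[0,\tau]$, the process $\tfrac{1}{Kn}\Q^{(n)}=a_n+C_n-J_n$ is tight, which is (b).

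The only genuinely delicate ingredient is the random-sum step: handling $\sum_{i=m(n)+1}^{m(n)+A_{\nu n}(t)}X_i$, where the number of summands is random and the first $m(n)$ initial transactions need not be i.i.d.\ with the later ones, and controlling it \emph{uniformly in $t$} for the tightness part; this is dispatched by the functional strong law for compound Poisson / renewal--reward processes together with monotonicity. The dependence between $C_n$ and $J_n$ through the shared weight sequence is harmless, since the relevant limiting pieces ($a_n\to y$ and $C_n\to ct$) are deterministic. Fixing the natural coupling that realises all the $A_{\nu n}$ from one Poisson clock in fact makes the above convergences almost sure and uniform on $[0,\tau]$, proving (a) and (b) simultaneously; we retain the two-part formulation to remain coupling-free and to plug the lemma directly into the finite-dimensional-convergence-plus-tightness route to Skorohod convergence used through Proposition \ref{prop:Skorohod conv implies hitting}.
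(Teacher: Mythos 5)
Your proposal is correct, but it takes a genuinely different and more elementary route than the paper. The paper proves the pointwise and finite-dimensional convergence via generating functions: a Tauberian expansion of $\expec[z^{X/(Kn)}]$ handles the compound Poisson drift, the leftover-space term $\sum_j U_j(nK)/(Kn)$ is killed by delayed-renewal asymptotics ($\expec[U_j(x)]\to\expec[X^2]/(2\expec[X])$) plus Markov's inequality, and the joint transforms are rearranged using independent increments; tightness is then proved separately in Appendix \ref{app:tightness} via the Arzel\'a--Ascoli characterisation, with partitions placed at block-mining epochs and central-limit/exponential concentration estimates. You instead exploit the structural fact that $D_\lambda$ is not rescaled, so the jump component is the \emph{same} a.s.\ finite random element for every $n$: the drift part converges to the deterministic line $ct$ by a random-index law of large numbers (and, being monotone with continuous limit, converges uniformly on $[0,\tau]$), while the correction $J_n-D_\lambda$ is bounded by $D_\lambda(\tau)\max_j U_j(nK)/(Kn)$, which vanishes because $U_j(nK)$ is at most the weight of the single straddling transaction. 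Slutsky then gives the finite-dimensional limit (indeed convergence in probability), and tightness follows from the standard fact that the sum of a C-tight sequence and a tight (here, uniformly convergent) sequence is tight, bypassing the Arzel\'a--Ascoli estimates entirely. What each approach buys: yours is shorter, needs only $\expec[X]<\infty$ together with the standing assumption $X\ll K$ w.p.\ 1 read as an almost-sure bound (which is what justifies $U_j(nK)<K$), and in fact yields uniform convergence of the whole path, hence Skorohod convergence directly; the paper's transform/renewal argument does not use boundedness of $X$ (it implicitly relies on $\expec[X^2]<\infty$ through the Tauberian expansion and the spent-time limit) and its tightness machinery is reused verbatim for the transaction-count process. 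If you wanted to drop the almost-sure bound on $X$, you would need to replace the crude bound on $U_j(nK)$ by, e.g., the observation that the straddling weights are among the first $O_P(n)$ arrivals, whose maximum is $o_P(n)$ whenever $\expec[X]<\infty$, or revert to the paper's renewal estimate; as written, under the paper's stated assumptions, your argument is sound.
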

\begin{proof}
    See Section \ref{app:fluid proof workload}.
\end{proof}
A similar result can also be shown for the transaction count process:
\begin{lemma}[Fluid limit for the  transaction count process]
\label{lem:fluid conv cust count}
   Recall the scaled count process $\M^{(n)}$ from Equation \eqref{eq:defMn}
   under the initial condition set in Equation \eqref{eq:fluid_initial_conditions}. Recall furthermore
   the limiting process $Y$ from Equation \eqref{eq:def-Y(t)}. Then for fixed $\tau>0$ and $0<t_1<\ldots<t_\ell<\tau$:
   \begin{itemize}
       \item[(a)] 
       \begin{equation}
       \label{eq:pw conv M}
         \Big( \frac{\expec[X]}{Kn} \M^{(n)}(t_i)\Big)_{i=1}^\ell
          \xrightarrow{d} \Big(\Yl{y}(t_i)\Big)_{i=1}^\ell;
       \end{equation}
       \item[(b)]
       the process 
       $\frac{\expec[X]}{nK} \M^{(n)}$
       is tight in the space $D[0,\tau]$.
   \end{itemize}
\end{lemma}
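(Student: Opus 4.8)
\textbf{Proof plan for Lemma \ref{lem:fluid conv cust count}.}

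The plan is to mirror the argument for the cumulative weight process in Lemma \ref{lem:fluid conv workoad}, replacing the space-left-over terms $U_j(\cdot)$ by the batch-count terms $B_j(\cdot)$, and reducing everything back to the already-established workload result wherever possible. For part (a), I would first recall the definition of $B_j(Kn)$, the number of transactions that fit into the $j$-th batch of capacity $Kn$; since $B_j(Kn) = \max\{\ell : X_{(j,1)}+\cdots+X_{(j,\ell)} \le Kn\}$ for the appropriate stream of i.i.d.\ weights, the renewal-theorem (elementary renewal theorem / functional LLN for renewal counting processes) gives $B_j(Kn)/n \xrightarrow{a.s.} K/\expec[X]$ as $n\to\infty$, and these limits hold jointly over the finitely many $j$ appearing before any fixed time $t_\ell$ because $D_\lambda(\tau)<\infty$ a.s. Combining this with $A_{\nu n}(t_i)/n \xrightarrow{a.s.} \nu t_i$ (again the functional LLN for the Poisson process), the initial-condition assumption $\M^{(n)}(0)/n \to Ky/\expec[X]$ from Equation \eqref{eq:fluid_initial_conditions}, and the Poisson law of $D_\lambda(t)$, we get the finite-dimensional convergence
\begin{equation*}
\Big(\tfrac{\expec[X]}{Kn}\M^{(n)}(t_i)\Big)_{i=1}^\ell \xrightarrow{d} \Big(y + \tfrac{\nu\expec[X]}{K}t_i - \sum_{j=1}^{D_\lambda(t_i)} 1\Big)_{i=1}^\ell = \big(\Yl{y}(t_i)\big)_{i=1}^\ell,
\end{equation*}
since the CL limit $\Yl{y}$ has slope $c=\nu\expec[X]/K$, jump rate $\lambda$, and unit jumps. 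The only subtlety is handling the dependence between $A_{\nu n}$, $D_\lambda$, and the weights through the batch sizes $B_j$: I would condition on $D_\lambda(\cdot)$ and on the arrival times, so that on the conditioned event the $B_j$'s depend only on disjoint blocks of the i.i.d.\ weight sequence and the scaling limit applies term by term, then integrate back out using dominated convergence (the number of terms is a.s.\ finite and integrable).

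For part (b), tightness of $\frac{\expec[X]}{nK}\M^{(n)}$ in $D[0,\tau]$, I would use the standard criterion for $D[0,\tau]$ (e.g.\ \cite[Theorem 13.2 or Theorem 16.8]{Billingsley1999}): it suffices to control the modulus of continuity, and here the process is a difference of a monotone increasing pure-jump part (the scaled arrival process plus initial value, whose increments are dominated by those of a Poisson process with rate $\nu n$ divided by $n$, hence $O(\text{interval length})$ in expectation uniformly in $n$) and a monotone decreasing pure-jump part (the scaled batch-departure process, whose total variation over $[0,\tau]$ is $\frac{\expec[X]}{nK}\sum_{j=1}^{D_\lambda(\tau)} B_j(Kn) \approx D_\lambda(\tau)$, again tight). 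Since each component is monotone and converges in the finite-dimensional sense to a process with no fixed jump times (the CL model jumps at Poisson times, which are a.s.\ distinct from any prescribed grid point), Theorem 13.5 of \cite{Billingsley1999} — monotone components with convergent finite-dimensional distributions and limit continuous in probability at the endpoints — yields tightness of the difference. In fact, once part (a) is in hand, the cleanest route is to invoke that tightness is automatic for monotone processes with convergent fidis to a limit that is a.s.\ continuous at $\tau$, applied separately to the up-part and the down-part, and then use that the sum of two tight sequences is tight.

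The main obstacle I anticipate is part (b), specifically verifying that the \emph{downward} component $\frac{\expec[X]}{nK}\sum_{j=1}^{D_\lambda(\cdot)}B_j(Kn)$ does not develop spurious oscillations in the limit: one must argue that the random batch sizes $B_j(Kn)$, though dependent on the weight sequence, are sufficiently concentrated (by the same renewal CLT/LLN) that the scaled departure process stays uniformly close to the piecewise-constant process $D_\lambda(\cdot)$, which is itself tight. A convenient shortcut is to note that we may couple $\M^{(n)}$ and $\Q^{(n)}$ on the same probability space so that $\frac{\expec[X]}{Kn}\M^{(n)}(t) - \frac{1}{Kn}\Q^{(n)}(t)$ is controlled by the cumulative renewal-overshoot terms $\expec[X]B_j(Kn) - (Kn - U_j(Kn))$ summed over $j \le D_\lambda(t)$, each of which is $o(n)$ uniformly; tightness of $\frac{1}{Kn}\Q^{(n)}$ is already granted by Lemma \ref{lem:fluid conv workoad}(b), and tightness is preserved under addition of a sequence that converges to $0$ uniformly on $[0,\tau]$ in probability. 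This reduces the whole of part (b) to the coupling estimate plus Lemma \ref{lem:fluid conv workoad}(b), and (a) then follows from (a) of Lemma \ref{lem:fluid conv workoad} together with the same coupling, so the two lemmas are proved essentially simultaneously.
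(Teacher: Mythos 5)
For part (a) your plan is essentially the paper's argument in different clothing: the paper also isolates the batch-size correction $\sum_{j\le D_\lambda(t)}\big(B_j(Kn)-Kn/\expec[X]\big)$ and kills it at fluid scale (via the identity $\{B_k(x)>\ell\}=\{\sum_{i=1}^{\ell+1}X_i<x\}$ and the law of large numbers), then handles the remaining term $m(n)+A_{\nu n}(t)-D_\lambda(t)Kn/\expec[X]$ with generating functions and stationarity/independence of increments to get the finite-dimensional limit; your renewal-LLN-plus-conditioning route arrives at the same place. One caveat: your claim that the $B_j$'s ``depend only on disjoint blocks of the i.i.d.\ weight sequence'' is not quite accurate, because the first transaction of block $j+1$ is exactly the one that did not fit in block $j$, hence is distributed as spent-plus-residual life of a renewal process rather than as a fresh copy of $X$. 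The paper addresses this explicitly (delayed renewal, boundedness of its expectation as in Equation \eqref{eq:convU1}); since it contributes only $O_P(1)$ per block and there are a.s.\ finitely many blocks before $\tau$, your conclusion survives, but the point should be acknowledged rather than assumed away.

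For part (b) there is a genuine flaw in your primary argument: the statement ``the sum of two tight sequences is tight'' is false in $D[0,\tau]$ with the $J_1$ topology. For instance, $x_n(t)=\1\{t\ge 1/2\}$ and $y_n(t)=-\1\{t\ge 1/2+1/n\}$ are each (trivially) tight, yet $x_n+y_n$ is a spike of height $1$ and width $1/n$ whose modulus $w'(\cdot,\delta)$ equals $1$ for all large $n$, so the sum is not tight. What rescues your decomposition here is that the upward component $\frac{\expec[X]}{Kn}\big(m(n)+A_{\nu n}(t)\big)$ is C-tight (its jumps have size $\expec[X]/(Kn)\to 0$ and it converges to a continuous drift), and ``tight plus C-tight'' is tight; that is the statement you need, not the general one. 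Alternatively, your coupling shortcut is sound and is a genuinely different route from the paper (which simply reruns the workload tightness proof with $X\equiv 1$): the difference $\frac{\expec[X]}{Kn}\M^{(n)}(t)-\frac{1}{Kn}\Q^{(n)}(t)$ consists of the initial-condition mismatch (vanishing by \eqref{eq:fluid_initial_conditions}), a centred compound-Poisson arrival term (uniformly $o_P(n)$ on $[0,\tau]$ by a maximal inequality), and at most $D_\lambda(\tau)$ per-block terms $\expec[X]B_j(Kn)-(Kn-U_j(Kn))$, each $O_P(\sqrt{n})$ by the renewal CLT, so the difference vanishes uniformly in probability and tightness transfers from Lemma \ref{lem:fluid conv workoad}(b); perturbation by a uniformly vanishing process indeed preserves tightness. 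If you take that route, the maximal-inequality estimate and the per-block bound must be stated and proved rather than summarised as ``each of which is $o(n)$ uniformly''.
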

\begin{proof}
    See Section \ref{app:fluid proof cust count}.
\end{proof}

Intuitively, both lemmas can be understood as follows: 
Firstly,
Equation \eqref{eq:pw conv Q} and Equation \eqref{eq:pw conv M} state that for fixed $t$, the weight  and transaction count processes look more like continuous time processes with downward jumps. The scaling in $n$ allows for a fast influx of transactions in a system where also the batch size scales in $n$. Normalising this by a factor $n^{-1}$ results in a process depicted in Equation \eqref{eq:def-Y(t)}. We refer to Figure \ref{fig:fluidlimits} for a visual representation. 

Secondly,
tightness is a technical detail both processes need to satisfy. While the formal definition is rather technical it is sufficient, for some given $t>0$, that one can provide a $\delta>0$ such that within an arbitrary interval $[t,t+\delta]$ both processes do not fluctuate too much.  

It turns out that the three aforementioned supporting results are sufficient to prove the main theorem in a straightforward manner.
\begin{proof}[Proof of Theorem \ref{thm:fluidlimit}]
    Fix $\tau>0$.
    Based on Lemmas \ref{lem:fluid conv workoad} and \ref{lem:fluid conv cust count} in combination with \cite[Theorem 13.1]{Billingsley1999} it follows that the convergence in Equation \eqref{eq:pw conv Q} and Equation \eqref{eq:pw conv M} can be extended to convergence in the space $D[0,\tau]$.  
    We also notice that $\Yl{y}(t)$ has jumps down at exponentially distributed times and therefore, for any $\tau>0$, $\prob(\Yl{y}(t)\in \mathcal{A}_{\tau})=0$.
    Finally, as we also assumed $\nu\expec[X]<\lambda K$, Proposition \ref{prop:Skorohod conv implies hitting} then concludes the proof.
\end{proof}

\paragraph{Diffusion limit.}
We implement a similar proof strategy for the diffusion limit. 
Such results are commonly shown via Donsker's theorem \cite[Chapter 4.3]{Whitt2002}, but, due to the added dependence in the BSQ, do not apply directly. Instead we show the following results:

\begin{lemma}[Diffusion limit for the cumulative weight process]
\label{lem:dif conv workoad}
   Recall the scaled cumulative weight process $\Q^{(n)}(t)$ from Equation \eqref{eq:defQn}
   under the initial condition set in Equation \eqref{eq:diffusion_initial_cond}. Recall furthermore
   Brownian motion $B_{y,c}(t)$ as defined in Theorem \ref{thm:diffusion_limit}. Let $\sigma=\sqrt{t\lambda}$, then for fixed $\tau>0$, 
       \begin{equation}
       \label{eq:dif conv Q}
          \Big(\frac{1}{\sigma Kn\sqrt{n}} \Q^{(n)}(nt)\Big)_{t\in[0,\tau]}
          \xrightarrow{\mathcal{D}} \Big(B_{y,-1}(t)\Big)_{t\in[0,\tau]}.
       \end{equation}
\end{lemma}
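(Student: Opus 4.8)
The plan is to follow the same two-step route as in the proof of the fluid limit (Lemma~\ref{lem:fluid conv workoad}): first establish convergence of the finite-dimensional distributions of the process $\big(\tfrac{1}{\sigma Kn\sqrt n}\Q^{(n)}(nt)\big)_{t\in[0,\tau]}$ towards those of $B_{y,-1}$, then prove tightness of this sequence in $D[0,\tau]$, and finally combine the two via \cite[Theorem~13.1]{Billingsley1999} to obtain the Skorohod convergence~\eqref{eq:dif conv Q} (which, together with Proposition~\ref{prop:Skorohod conv implies hitting}, is what Theorem~\ref{thm:diffusion_limit} ultimately requires). The starting point is a pathwise decomposition read off from~\eqref{eq:defQn}: write $\tfrac{1}{\sigma Kn\sqrt n}\Q^{(n)}(nt)$ as a sum of five pieces, namely (I) the scaled initial load $\tfrac{1}{\sigma Kn\sqrt n}\Q^{(n)}(0)$; (II) the centred arrival contribution $\tfrac{1}{\sigma Kn\sqrt n}\big(\sum_{i=m(n)+1}^{m(n)+A_{\nu n}(nt)}X_i-\nu n^{2}t\,\expec[X]\big)$; (III) the leftover correction $\tfrac{1}{\sigma Kn\sqrt n}\sum_{j=1}^{D_\lambda(nt)}U_j(nK)$; (IV) the deterministic drift $\tfrac{1}{\sigma Kn\sqrt n}\big(\nu n^{2}t\,\expec[X]-\lambda n^{2}Kt\big)$; and (V) the centred, diffusively rescaled batch count $-\tfrac{1}{\sigma\sqrt n}\big(D_\lambda(nt)-\lambda nt\big)$.

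Pieces (I), (II), (IV), (V) are the routine ones. Piece~(I) tends to $y$ by the initial condition~\eqref{eq:diffusion_initial_cond}. Piece~(II) is a centred compound-Poisson sum whose standard deviation is of order $n$ (order $n^{2}$ summands, each of order $1$), so after dividing by $\sigma Kn\sqrt n$ it is $O_P\big((\sigma\sqrt n)^{-1}\big)$ and vanishes uniformly on $[0,\tau]$: in the diffusion regime the arrival noise is asymptotically irrelevant, and only the fluctuation of the \emph{number of block minings} survives at macroscopic scale. Piece~(IV) is deterministic and linear in $t$; under the calibration of $n$ and $\sigma$ prescribed in Theorem~\ref{thm:diffusion_limit} and the stability assumption $\nu\expec[X]<\lambda K$, it converges to the linear drift $ct$ with $c=-1$. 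Piece~(V) is a centred, diffusively scaled Poisson process; Donsker's functional CLT for the Poisson process (\cite[Chapter~4.3]{Whitt2002}, applied through the deterministic time change $s\mapsto\lambda ns$) gives its convergence in $D[0,\tau]$ to a standard Brownian motion, the factor $\sigma$ being precisely what rescales the limiting variance to $1$. Joint convergence at fixed times $0<t_1<\dots<t_\ell<\tau$ then follows from the independence of the increments of $D_\lambda$, which produces exactly the Brownian covariance for the limit of piece~(V) while pieces (I), (II), (IV) converge to deterministic limits; a Slutsky / converging-together argument upgrades this to the finite-dimensional convergence to $B_{y,-1}$.

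The genuine obstacle is piece~(III). The leftovers $U_j(nK)$ are not independent — they are coupled to one another and to the arrival stream through the first-come-first-serve packing of transactions into blocks of size $nK$, and it is exactly this dependence that blocks a direct appeal to Donsker's theorem. The required estimates are renewal-theoretic: each $U_j(nK)$ is dominated by the weight of a single transaction and is the overshoot of the partial-sum process of the $X_i$ over the level $nK$, hence (using a finite second moment of $X$) tight in $n$ with uniformly bounded mean; therefore $\sum_{j=1}^{D_\lambda(nt)}U_j(nK)=O_P(n)$, which after division by $\sigma Kn\sqrt n$ tends to $0$, and uniformity over $t\in[0,\tau]$ is obtained by applying a maximal inequality to the nondecreasing process $t\mapsto\sum_{j\le D_\lambda(nt)}U_j(nK)$. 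Once (III) is controlled, $\tfrac{1}{\sigma Kn\sqrt n}\Q^{(n)}(n\cdot)$ differs only by terms converging uniformly to $0$ from the scaled version of the clean compound-Poisson process $\Q^{(n)}(0)+\sum_{i>m(n)}X_i-nKD_\lambda(\cdot)$, and the limit $B_{y,-1}$ follows from the decomposition. Tightness is argued componentwise: piece~(V) is tight in $D[0,\tau]$ by its FCLT, piece~(IV) trivially, and pieces~(I)--(III) because they converge uniformly (to $y$, $0$, $0$ respectively); a finite sum of tight sequences is tight, giving tightness of $\tfrac{1}{\sigma Kn\sqrt n}\Q^{(n)}(n\cdot)$, and with the finite-dimensional convergence and \cite[Theorem~13.1]{Billingsley1999} the proof of~\eqref{eq:dif conv Q} is complete.
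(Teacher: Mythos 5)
Your proposal is correct and follows essentially the same route as the paper's proof: the identical decomposition into initial load, centred arrival sum, leftover terms $U_j(nK)$, deterministic drift, and the centred Poisson block-count, with the first vanishing pieces handled by CLT/Markov-type bounds and Slutsky, and the surviving term handled by Donsker's theorem. You are somewhat more explicit than the paper about uniform negligibility, finite-dimensional convergence and tightness, but this is a refinement of, not a departure from, the paper's argument.
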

\begin{proof}
    See Section \ref{app:dif proof workload}.
\end{proof}
A similar result can also be shown for the transaction count process:
\begin{lemma}[Diffusion limit for the transaction count process]
\label{lem:dif conv cust count}
   Recall the scaled count process $\M^{(n)}(t)$ from Equation \eqref{eq:defMn} under the initial condition set in Equation \eqref{eq:diffusion_initial_cond}. Recall furthermore Brownian motion $B_{y,c}(t)$ as defined in Theorem \ref{thm:diffusion_limit}. Let $\sigma=\sqrt{t\lambda}$, then for fixed $\tau>0$, 
       \begin{equation}
       \label{eq:dif conv M}
         \Big( \frac{\expec[X]}{\sigma Kn\sqrt{n}} \M^{(n)}(nt)\Big)_{t\in[0,\tau]}
          \xrightarrow{\mathcal{D}} \Big(B_{y,-1}(t)\Big)_{t\in[0,\tau]}.
       \end{equation}
\end{lemma}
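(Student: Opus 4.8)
The plan is to mirror the proof of Lemma \ref{lem:dif conv workoad} (the cumulative-weight version in Section \ref{app:dif proof workload}), transferring the argument from the workload process $\Q^{(n)}$ to the transaction-count process $\M^{(n)}$ via the identity $\M^{(n)}(t) = m(n) + A_{\nu n}(t) - \sum_{j=1}^{D_\lambda(nt)} B_j(Kn)$, and then exploiting the asymptotic linear relationship $\expec[X]B_j(Kn) \approx Kn$ up to fluctuations of order $\sqrt{n}$. First I would rewrite $\tfrac{\expec[X]}{\sigma Kn\sqrt{n}}\M^{(n)}(nt)$ by centering each summand: write $\expec[X]B_j(Kn) = Kn - (Kn - \expec[X]B_j(Kn))$, so that the batch term contributes a deterministic drift $-\tfrac{D_\lambda(nt)}{\sigma n\sqrt n}\cdot Kn$ plus a centered random part. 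The centered batch sizes $Kn - \expec[X]B_j(Kn)$ have bounded variance uniformly in $n$ (they depend only on the overshoot distribution, which stabilizes since $X \ll K$), so after dividing by $\sigma Kn\sqrt{n}$ the random fluctuations of the batch term are negligible at the fluid scale but contribute at the diffusion scale together with the Poisson fluctuations of $A_{\nu n}$ and $D_\lambda$.

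The key steps, in order: (i) decompose $\tfrac{\expec[X]}{\sigma Kn\sqrt n}\M^{(n)}(nt)$ into an initial-condition term (which converges to $\sigma Ky$ after the stated normalization in Equation \eqref{eq:diffusion_initial_cond}, giving the starting point $y$), a drift term, and a sum of independent centered processes; (ii) identify the drift: $\nu n \cdot nt$ arrivals against $\lambda nt$ batch-firings each removing $\approx Kn/\expec[X]$ customers, so the net drift per unit of the rescaled time is proportional to $(\nu\expec[X] - \lambda K)n$, and with the choice $n = \sigma^2 K^2/(\lambda K - \nu\expec[X])$ this collapses to the unit negative drift $-1$ after dividing by $\sigma Kn\sqrt n$; (iii) apply the functional central limit theorem (Donsker, \cite[Chapter 4.3]{Whitt2002}) to each of the independent centered pieces — the compensated Poisson arrival process $A_{\nu n}(nt) - \nu n^2 t$, the compensated firing process $D_\lambda(nt) - \lambda nt$, and the triangular array of centered batch sizes $\sum_{j\le D_\lambda(nt)}(Kn - \expec[X]B_j(Kn) - \expec[\,\cdot\,])$ — to obtain a Brownian limit with the appropriate variance; (iv) combine these via independence and the continuous-mapping theorem, check that the total variance coefficient matches that of $B_{y,-1}(t)$ (this is where the $\sigma=\sqrt{t\lambda}$ scaling is calibrated), and conclude Skorohod convergence on $D[0,\tau]$.

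The main obstacle will be handling the batch-size term $\sum_{j=1}^{D_\lambda(nt)} B_j(Kn)$ rigorously, because $B_j(Kn)$ is \emph{not} an i.i.d.\ sequence with a fixed distribution: the number of transactions fitting into a block of size $Kn$ depends on which transactions are at the head of the queue, and the queue composition may be non-i.i.d.\ (as flagged in Section \ref{sec:queueing_model} for the initial $m$ transactions and addressed for future ones in Section \ref{app:fluid proof workload}). I would handle this by first establishing, as in the workload proof, a renewal-theoretic estimate showing $\expec[B_j(Kn)] = Kn/\expec[X] + O(1)$ and $\mathrm{Var}(B_j(Kn)) = O(n)$ uniformly in $j$ and $n$, with the $O(\sqrt n)$-scale fluctuations asymptotically Gaussian; then a coupling or Lindeberg-type argument reduces the triangular array to a genuine FCLT. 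A secondary technical point is verifying tightness for this composite process — but since each constituent is already tight (Donsker plus a random but well-controlled time change $D_\lambda(nt)/n \to \lambda t$), tightness of the sum follows from \cite[Theorem 13.1 or 13.4]{Billingsley1999}, exactly as in the proof of Lemma \ref{lem:dif conv workoad}.
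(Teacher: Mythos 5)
Your overall route coincides with the paper's: the same centering decomposition of $\frac{\expec[X]}{\sigma Kn\sqrt n}\M^{(n)}(nt)$ into initial condition, drift, compensated Poisson terms and the centered batch sizes $B_j(Kn)-Kn/\expec[X]$, renewal/CLT control of $B_j(Kn)$, Donsker for the Poisson part, and Slutsky to combine. However, there is a genuine miscalibration in your variance bookkeeping, and it concerns exactly the point the paper singles out as the crux. First, your claim that the centered batch sizes $Kn-\expec[X]B_j(Kn)$ ``have bounded variance uniformly in $n$'' is false: $B_j(Kn)$ is a renewal count at level $Kn$, so its variance grows linearly in $n$ (you later say $\mathrm{Var}(B_j(Kn))=O(n)$ yourself, contradicting the earlier claim). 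This unboundedness is precisely why the count process needs a separate argument from the workload process, where $U_j(Kn)$ is stochastically bounded. Second, and more importantly, your assertion that the batch-size fluctuations and the arrival fluctuations ``contribute at the diffusion scale together with'' the fluctuations of $D_\lambda$ is wrong: under the space scaling $n^{3/2}$ and time scaling $nt$, the compensated arrival term is $O(n)/n^{3/2}\to 0$, and the sum of the $O(\sqrt n)$-sized centered batch terms over the $O(n)$ mining events is also $O(n)/n^{3/2}\to 0$. The paper's proof consists essentially of establishing the renewal CLT $\bigl(B_j(Kn)-Kn/\expec[X]\bigr)/\sqrt n\to\mathcal N(0,C)$ and then a transform computation showing that the whole centered batch sum vanishes in the limit; the Brownian fluctuation of $B_{y,-1}$ comes solely from the compensated block-mining process $D_\lambda(nt)$ amplified by the per-block count $Kn/\expec[X]$, with the drift $-1$ produced by the choice of $n$.

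If you carried out your step (iv) honestly you would discover this (the variance would not match $B_{y,-1}$ otherwise), so the plan is salvageable, but as written the expected outcome of the key limit is inverted: the task is not to extract a Gaussian contribution from the triangular array of batch sizes via a Lindeberg argument, but to prove that this array, despite its variance growing in $n$, is asymptotically negligible after the $\sigma Kn\sqrt n$ normalisation. Your tightness remark is fine and matches the paper's reliance on Donsker plus Slutsky for the surviving term.
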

\begin{proof}
    See Section \ref{app:dif proof cust count}.
\end{proof}
In a similar fashion, we can now provide the proof of Theorem \ref{thm:diffusion_limit}.

\begin{proof}[Proof of Theorem \ref{thm:diffusion_limit}]
    As $B_{y,-1}(t)$ is a continuous process with a negative drift,
   the proof follows from Lemma \ref{lem:dif conv workoad} and \ref{lem:dif conv cust count} combined with Proposition \ref{prop:Skorohod conv implies hitting}.
\end{proof}
It therefore suffices to show Lemma \ref{lem:fluid conv workoad}--\ref{lem:dif conv cust count} which is done for each case in the following sections.

\subsection{Proof of Lemma \ref{lem:fluid conv workoad} - Fluid limit for the cumulative weight process}
\label{app:fluid proof workload}
Let $\{V_i\}_{i\geq 1}$ be the total number of transactions removed from the system after the $i$-th block mining event, under the convention that $V_0=m$. Define process 
\begin{equation}
\label{eq:defB}
J_k(\ell) = \sum_{i=V_k+1}^{V_k+\ell} X_i
\quad \text{ and } \quad
    B_k(x) = \text{argmax}_\ell
    \Big\{ J_k(\ell) < x\Big\},
\end{equation}
or in other words, the maximum number of transactions to arrive such that the cumulative sum does not exceed $x$. Let then 
\begin{equation}
\label{eq:defUk}
    U_k(x)=
    x-\sum_{i=V_k+1}^{V_k+B_k(x)}X_i.
\end{equation}
Intuitively $U_k(x)$ represents the empty space in the $k$-th block of size $x$ that could not be filled anymore by the following transaction, as including this transaction causes the exceeding of the maximal block size. Note, by this definition, that in case $\Q(t)$ or $\M(t)$ passes the $x$-axis, $U_k(x)$ is determined based on transactions that are for one part in the current BSQ and for another part from \textit{future transactions}, i.e. transactions in $\{X_i\}_{i\geq1}$ that are not in the BSQ yet. As we are only interested in the BSQ before it hits 0 the first time, this effect does not play a role in the rest of the analysis.

We now continue to the proof of Lemma \ref{lem:fluid conv workoad}.

\begin{proof}[Proof of Lemma \ref{lem:fluid conv workoad} (a)]
Recall $Q^{(n)}(t)$ from Equation \eqref{eq:defQn}.
Fix $t>0$,
    then we start by showing $\Q^{(n)}(t)/(Kn) \xrightarrow{d} \Yl{y}(t)$ pointwise in distribution,
    \begin{equation}
        \frac{1}{Kn}\Q^{(n)}(t) = 
         \underbrace{\frac{1}{Kn}\Bigg(\sum^{m(n)}_{h=1}X_h+\sum^{m(n)+A_{\nu n}(t)}_{i=m(n)+1} X_i -
        nKD_\lambda(t)\Bigg)}_{A_I}+
        \underbrace{\frac{1}{Kn}\sum_{j=1}^{D_\lambda(t)} U_j(nK)}_{A_{II}}.
    \end{equation}
    In the following we show pointwise convergence of $A_I$ and $A_{II}$ separately.
    \paragraph{Pointwise convergence of $A_I$:}
    We use generating functions  to show pointwise convergence of $A_I$, 
    \begin{equation}
    \begin{aligned}
        \expec[z^{A_I}]&=
        \expec[z^{\frac{1}{Kn}\big(\sum^{m(n)}_{h=1}X_h +\sum^{m(n)+A_{\nu n}(t)}_{i=m(n)+1}X_i - nKD_\lambda(t) \big)}]
        \\&= \expec[z^{\frac{1}{Kn}\sum^{m(n)}_{h=1}X_h}] \expec[z ^{ \frac{1}{Kn}  \sum^{A_{\nu n}(t)}_{i=1}X_i}]
        \expec[z ^{-D_\lambda(t)}].
    \end{aligned}
    \end{equation}
For the first generating function, we find by  \eqref{eq:fluid_initial_conditions}
\begin{equation}
    \expec[z^{\frac{1}{Kn}\sum^{m(n)}_{h=1}X_h}] 
    \to z^{\frac{yK}{K}}=z^y
\end{equation}
For the second expectation, by the tower rule, we can write
 \begin{equation}
 \label{eq:pointwise conv (A)}
      \expec[z ^{ \frac{1}{Kn}  \sum^{A_{\nu n}(t)}_{i=1}X_i}] = 
    \expec\Big[ 
    \expec[z ^{ \frac{1}{Kn}X}]^{A_{\nu n}(t)}
    \Big] = 
    \e^{-\nu nt (1-  \expec[z ^{ \frac{1}{Kn}X}])) }.
 \end{equation}
For $z\in(0,1)$ fixed and for $n$ large, we observe that $z^{1/(Kn)}\to 1$. In this regime, we can apply  a Tauberian expansion, see for example \cite[Theorem 8.1.6]{Bingham1987}, that states
\begin{equation}
\label{eq:tauberain_exapansion}
    \expec[z^{\frac{X}{Kn}}] = 
    1+\log(z^{\frac{1}{Kn}})\expec[X] + \frac{C}{(Kn)^2}(1+o(1)),
\end{equation}
where the error term is small for $n\to\infty$ and
$C$ is bounded, both uniform in $z\in[\varepsilon,1-\varepsilon]$ and in $K<1/\varepsilon$. Substituting this result we find 
\begin{equation}
\begin{aligned}
    \exp\Big\{-\nu nt (1-  \expec[z ^{ \frac{1}{Kn}X}])) \Big\}&=
      \exp\Big\{-\nu nt \Big(-\log(z^{\frac{1}{Kn}})\expec[X]-\frac{C}{n^2}(1+o(1)\Big)\Big\}
      \\&=z^{\nu t\expec[X]/K}(1+o(1)),
\end{aligned}
\end{equation}
uniformly in $z\in[\varepsilon,1-\varepsilon]$. According to a similar argument, we can derive  $\expec[z^{-D_\lambda(t)}]=\e^{-\lambda t(1-\frac{1}{z})}$, so that, by recognising the transform of $\Yl{q}(t)$,
\begin{equation}
\label{eq:(A)}
     \expec[z^{A_I}]=
     z^{y+\nu t\expec[X]}(1+o(1))
     \e^{-\lambda t(1-\frac{1}{z})}
    \to 
    \expec[z^{\Yl{y}(t)}]
     ,
\end{equation}
uniformly in $z\in[\varepsilon,1-\varepsilon]$, which shows pointwise convergence in distribution.
\paragraph{Pointwise convergence of $A_{II}$:}
As $D_{\lambda(t)}$ is independent of all $U_k(nK)$, it suffices to show that $\expec[U_k(nK)]$ converges in $n$. To do so, we observe that $U_k(x)$ corresponds to the \textit{spent time} of a delayed renewal process $N_k(t)$, that has inter-renewals times that have the same distribution as $X$, at time $x$. See also Figure \ref{fig:renewal_process} for details. 

The renewal process is delayed as the first transaction in the $k$-th block $X_{B_{k-1}(x)+1}$ is special as this one did not fit in the last block. 
We observe that, $X_{B_{k-1}(x)+1}$ is exactly the \textit{spent time} plus the \textit{residual time}, so $X_{B_{k-1}(x)+1}$ is almost surely bounded \cite[Theorem 1.18]{Mitov2014}.
Therefore $N_k(t)$ is a \textit{delayed renewal process} \cite[Chapter 5, Section 7A]{Karlin1975}. 

\begin{figure}
    \centering
    \tikzset{every picture/.style={line width=0.75pt}} 

\begin{tikzpicture}[x=0.75pt,y=0.75pt,yscale=-1,xscale=1]

\draw    (100.27,31.1) -- (100.93,225.1) ;
\draw    (78.93,211.1) -- (412.24,210.14) ;
\draw  [draw opacity=0] (101.6,210.53) .. controls (106.37,197.87) and (122.21,188.51) .. (141.06,188.4) .. controls (160.33,188.29) and (176.56,197.88) .. (181.08,210.91) -- (141.24,218.4) -- cycle ; \draw   (101.6,210.53) .. controls (106.37,197.87) and (122.21,188.51) .. (141.06,188.4) .. controls (160.33,188.29) and (176.56,197.88) .. (181.08,210.91) ;  
\draw  [draw opacity=0] (181.08,210.91) .. controls (183.26,198.07) and (193.66,188.3) .. (206.22,188.23) .. controls (218.76,188.16) and (229.26,197.78) .. (231.6,210.57) -- (206.38,215.93) -- cycle ; \draw   (181.08,210.91) .. controls (183.26,198.07) and (193.66,188.3) .. (206.22,188.23) .. controls (218.76,188.16) and (229.26,197.78) .. (231.6,210.57) ;  
\draw  [draw opacity=0] (276.16,210.7) .. controls (277.74,199.31) and (286.08,190.6) .. (296.17,190.55) .. controls (306.32,190.49) and (314.79,199.18) .. (316.45,210.64) -- (296.31,214.7) -- cycle ; \draw   (276.16,210.7) .. controls (277.74,199.31) and (286.08,190.6) .. (296.17,190.55) .. controls (306.32,190.49) and (314.79,199.18) .. (316.45,210.64) ;  
\draw  [draw opacity=0] (316.45,210.64) .. controls (321.21,198.7) and (336.64,189.9) .. (354.99,189.8) .. controls (373.5,189.69) and (389.14,198.47) .. (393.89,210.51) -- (355.15,218.33) -- cycle ; \draw   (316.45,210.64) .. controls (321.21,198.7) and (336.64,189.9) .. (354.99,189.8) .. controls (373.5,189.69) and (389.14,198.47) .. (393.89,210.51) ;  
\draw  [dash pattern={on 0.84pt off 2.51pt}]  (358.32,153.74) -- (358.32,209.74) ;
\draw  [dash pattern={on 0.84pt off 2.51pt}]  (316.72,107.1) -- (316.45,210.64) ;
\draw  [draw opacity=0] (316.27,155.32) .. controls (318.57,144.85) and (327.25,137.08) .. (337.58,137.12) .. controls (347.33,137.15) and (355.56,144.13) .. (358.32,153.74) -- (337.49,160.85) -- cycle ; \draw   (316.27,155.32) .. controls (318.57,144.85) and (327.25,137.08) .. (337.58,137.12) .. controls (347.33,137.15) and (355.56,144.13) .. (358.32,153.74) ;  
\draw  [draw opacity=0] (100.09,102.27) .. controls (106.69,90.56) and (152.54,81.63) .. (208.09,81.83) .. controls (268.13,82.05) and (316.77,92.86) .. (316.72,105.97) .. controls (316.72,106.08) and (316.72,106.2) .. (316.71,106.31) -- (208,105.57) -- cycle ; \draw   (100.09,102.27) .. controls (106.69,90.56) and (152.54,81.63) .. (208.09,81.83) .. controls (268.13,82.05) and (316.77,92.86) .. (316.72,105.97) .. controls (316.72,106.08) and (316.72,106.2) .. (316.71,106.31) ;  
\draw  [draw opacity=0][dash pattern={on 0.84pt off 2.51pt}] (316.71,106.31) .. controls (318.03,93.79) and (338.41,83.9) .. (363.32,84) .. controls (388.12,84.09) and (408.36,94.03) .. (409.75,106.49) -- (363.24,107.73) -- cycle ; \draw  [dash pattern={on 0.84pt off 2.51pt}] (316.71,106.31) .. controls (318.03,93.79) and (338.41,83.9) .. (363.32,84) .. controls (388.12,84.09) and (408.36,94.03) .. (409.75,106.49) ;  
\draw  [draw opacity=0] (316.71,106.31) .. controls (318.03,93.79) and (338.41,83.9) .. (363.32,84) .. controls (377.49,84.05) and (390.18,87.32) .. (398.7,92.43) -- (363.24,107.73) -- cycle ; \draw   (316.71,106.31) .. controls (318.03,93.79) and (338.41,83.9) .. (363.32,84) .. controls (377.49,84.05) and (390.18,87.32) .. (398.7,92.43) ;  

\draw (128.32,218.98) node [anchor=north west][inner sep=0.75pt]   [align=left] {$\displaystyle X_{1}$};
\draw (196.72,218.98) node [anchor=north west][inner sep=0.75pt]   [align=left] {$\displaystyle X_{2}$};
\draw (277.92,219.78) node [anchor=north west][inner sep=0.75pt]   [align=left] {$\displaystyle X_{B_{k}( x)}$};
\draw (332.32,219.78) node [anchor=north west][inner sep=0.75pt]   [align=left] {$\displaystyle X_{B_{k}( x) +1}$};
\draw (357.92,195.58) node [anchor=north west][inner sep=0.75pt]   [align=left] {$\displaystyle x$};
\draw (323.52,112.14) node [anchor=north west][inner sep=0.75pt]    {$U_{k}( x)$};
\draw (160.72,28.74) node [anchor=north west][inner sep=0.75pt]   [align=left] {Transactions in $ $\\block $\displaystyle k$};
\draw (328.72,33.54) node [anchor=north west][inner sep=0.75pt]   [align=left] {Transactions in\\ block $\displaystyle k+1$};
\draw (81.6,6.67) node [anchor=north west][inner sep=0.75pt]   [align=left] {$\displaystyle N_{k}( t)$};
\draw (416.27,198) node [anchor=north west][inner sep=0.75pt]   [align=left] {$\displaystyle t$};

\end{tikzpicture}
    \caption{Sketch of the renewal process $N_k(t)$ that is used to determine the distribution of $U_k(x)$.
    For a block size $x$, only $B_k(x)$ transactions can fit, leaving an open space of $U_k(x)$.
    }
    \label{fig:renewal_process}
\end{figure}
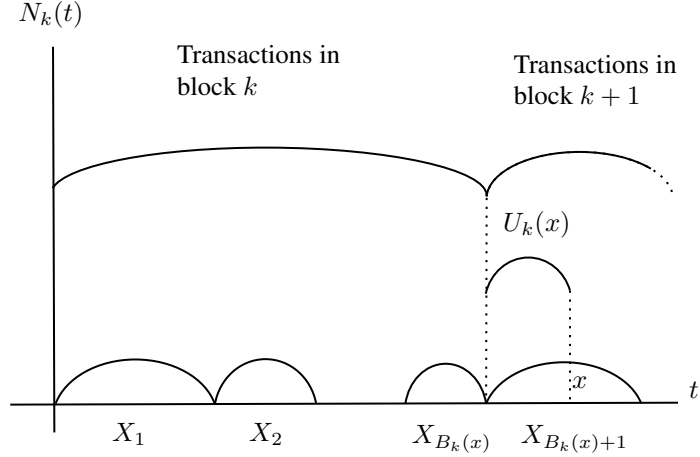

For a regular renewal process the scaling of the expected spent time is known,  \cite[Theorem 1.18]{Mitov2014}.
Moreover, many properties for the delayed renewal process are asymptotically the same as for the non-delayed renewal process \cite[Theorem 1.20]{Mitov2014}, 
inspection of the proof of \cite[Theorem 1.18]{Mitov2014} shows that
\begin{equation}
\label{eq:convU1}
    \lim_{x\to\infty }
    \expec[U_k(x)] =
   \frac{\expec[X^2]}{2\expec[X]}.
\end{equation}
As the blocksize is scaled to infinity, we find therefore that $U_k(nK)$ has bounded expectation for $n\to\infty$.

By the Markov inequality, one finds for any given $\varepsilon>0$ that
\begin{equation}
\label{eq:(B)}
    \prob\bigg(
    \frac{1}{Kn}\sum_{j=1}^{D_\lambda(t)} U_j(nK)>\varepsilon
    \bigg)
    \leq
    \frac{
    \expec\left[\sum_{j=1}^{D_\lambda(t)} U_j(nK)\right]
    }{\varepsilon Kn}
    =\frac{\expec[X^2]\expec[D_{\lambda}(t)]}{2\expec[X]\varepsilon Kn}(1+o(1))
    \to 0.
\end{equation}

Based on the results of Equation \eqref{eq:(A)} and Equation \eqref{eq:(B)}, we find by Slutsky's theorem \cite[Theorem 11.4]{Gut2012},
\begin{equation}
    \frac{1}{nK}\Q^{(n)}(t)
    \to \Yl{y}(t).
\end{equation}

\paragraph{Convergence of the finite dimensional distribution:}
Extending this to the finite dimensional distribution is now straightforward. Fix $\ell\in\mathbb{N}^+$ and $s_1,\ldots, s_\ell>0 $ and $0<t_1<\ldots<t_\ell<\tau$.
Then, by \eqref{eq:(B)} and Slutsky's theorem, the joint transform can be simplified to
\begin{equation}
\begin{aligned}
   & \expec[z^{\frac{1}{nK}
    (s_1Q^{(n)}(t_1)+\ldots+s_\ell Q^{(n)}(t_\ell))
    }] \\&\qquad= 
    \expec\Big[ z^{\ell \frac{1}{nK}\sum_{h=0}^{m(n)}X_h
    +\sum_{j=1}^\ell s_i\big[
    \frac{1}{Kn}\sum_{i=m(n)+1}^{m(n)+A_{\nu n}(t_j)}X_i
    -D_\lambda(t_j)
    +\frac{1}{Kn}\sum^{D_\lambda(t_j)}_{i=1} U_i(nK)]
    }\Big]
    \\& \qquad= 
    \expec\Big[ z^{\ell \frac{1}{nK}\sum_{h=0}^{m(n)}X_h
    +\sum_{j=1}^\ell s_i\big[
    \frac{1}{Kn}\sum_{i=m(n)+1}^{m(n)+A_{\nu n}(t_j)}X_i
    -D_\lambda(t_j)
    \big]
    }\Big](1+o(1)),
\end{aligned}
\end{equation}
where the last equality follows from \eqref{eq:(B)}.
Using that $A_{\nu n}(t)$ and $D_\lambda(t)$ are Poisson counting processes, that have stationary and independent increments, this can be rewritten to
\begin{equation}
\begin{aligned}
\label{eq:findemQ1}
     \expec\Big[ z\text{\textasciicircum}\ \Big\{&\ell \frac{1}{nK}\sum_{h=0}^{m(n)}X_h
   + \frac{(s_1+\ldots+\ldots s_\ell)}{Kn}\sum_{i=1}^{A_{\nu n}(t_1)}
    X_i^{(1)}+
    \ldots
    +\frac{s_\ell}{Kn} \sum_{i=1}^{A_{\nu n}(t_\ell-t_{\ell-1})}X_i^{(\ell)}
    \\&
    -(s_1+\ldots +s_\ell)D_\lambda(t_1)
    -\ldots -
    s_\ell D_\lambda(t_\ell-t_{\ell-1})\Big\}\Big](1+o(1)),
\end{aligned}
\end{equation}
where $X_i^{(j)}$ are i.i.d. copies of $X$. As now all terms are independent, we can use \eqref{eq:(A)} to show that \eqref{eq:findemQ1} converges for $n\to\infty$ to
\begin{equation}
\label{eq:findemQ2}
    z^{\ell y +\nu\expec[X](
    t_1(s_1+\ldots+ s_\ell) + \ldots +
    (t_\ell-t_{\ell-1})s_\ell )
    }\e^{\lambda(1-z)\big( t_1(s_1+\ldots +s_\ell)+\ldots +(t_\ell-t_{\ell-1})s_\ell\big)}.
\end{equation}
This term can be similarly rearranged, again using that the CL model also has stationary and independent increments, where one finds that \eqref{eq:findemQ2} is equal to the joint transform of  $(\Yl{q}(t_i))_{i=1}^\ell$
\end{proof}

Then, it remains to show tightness, which is a technical necessity for the fluid limit. Proving tightness is rather technical and therefore presented in Appendix \ref{app:tightness}.
\begin{proof}[Proof of Lemma \ref{lem:fluid conv workoad} (b)]
See Appendix \ref{app:tightness}.

\end{proof}

\subsection{Proof of Lemma \ref{lem:fluid conv cust count} - Fluid limit for the transaction count process}
\label{app:fluid proof cust count}
Recall $V_k$ as the total number of transactions that left the system after the $k$-th departure event.
Recall the random variable $B_k(x)$ from Equation \eqref{eq:defB} that denotes how many transactions one can maximally fit in the $k$-th block of size $x$. We then continue according to the same approach as in the analysis for the weight.

\begin{proof}[Proof of Lemma \ref{lem:fluid conv cust count} (a)]
Recall $\M^{(n)}(t)$ from Equation \eqref{eq:defMn} and the weight sequence of transactions given by$\{X_i\}_{i\geq 1}$, then
\begin{equation}
    \begin{aligned}
         \frac{\expec[X]}{Kn}\M^{(n)}(t) &=    
          \frac{\expec[X]m(n)}{Kn}+
          \frac{\expec[X]}{Kn}A_{\nu n}(t)-  \frac{\expec[X]}{Kn}\sum_{j=1}^{D_\lambda(t)} B_j(Kn)\\&=
        \underbrace{  \frac{\expec[X]m(n)}{Kn} +\frac{\expec[X]}{Kn}\Big( A_{\nu n}(t)-\frac{KnD_\lambda(t)}{\expec[X]}\Big)}_{(C)}+
         \underbrace{  \frac{\expec[X]}{Kn}\sum_{j=1}^{D_\lambda(t)} \Big(B_j(Kn)-\frac{Kn}{\expec[X]}\Big)}_{(D)}
         ,
    \end{aligned}
\end{equation}
where we start with showing pointwise convergence for fixed $t$. Pointwise convergence for $(C)$ is similar to pointwise convergence of $(A)$, in Equation \eqref{eq:pointwise conv (A)}. Hence, by the same argumentation and the initial condition in \eqref{eq:diffusion_initial_cond}
\begin{equation}
\label{eq:(C)}
        (C)= \frac{\expec[X]m(n)}{Kn}+
      \frac{\expec[X]}{Kn}A_{\nu n}(t) -D_\lambda(t)
      \to 
      \Yl{y}(t).
\end{equation}
In the following, we mainly focus on pointwise convergence of $(D)$. For this, we notice that for $B_k(x)$
\begin{equation}
\label{eq:trans_count_B_to_sum}
\{B_k(x)>\ell\}=
\bigg\{\text{argmax}_m
\Big\{\sum_{i=1}^{m} X_i < x\Big\}>\ell
\bigg\} = 
 \Big\{ \sum^{\ell+1}_{i=1} X_i < x\Big\}.
\end{equation}
Fix $\varepsilon>0$, then we show

\begin{equation}
\label{eq:(D)}
    \begin{aligned}
        \prob\bigg( 
    \frac{1}{n}\Big(B_k(x)-& \frac{Kn}{\expec[X]}
    \Big)>\varepsilon
        \bigg)= 
        \prob\bigg(
        \sum^{n\varepsilon+Kn/\expec[X]+1}_{i=1}X_i
        <Kn\bigg)\\&=
        \prob\bigg(
        \frac{1}{n\varepsilon+Kn/\expec[X]+1} \sum^{n\varepsilon+Kn/\expec[X]+1}_{i=1}X_i
        <\frac{\expec[X]}{1+\expec[X]/K(1/n+\varepsilon)}\bigg)
        \\&\to 0,
    \end{aligned}
\end{equation}
based on the law of large numbers, where we observe that $\expec[X]/(1+\expec[X]/K(1/n+\varepsilon))<\expec[X]$. 
Note also that by the observation from the proof of Lemma \ref{lem:fluid conv workoad}, $X_1$ is special, as it is the sum of a \textit{spent time} and a \textit{residual time}. However, in the limit, we observe that the expected size of this special transaction is bounded according to Equation \eqref{eq:convU1}. 
Therefore, the result still holds.
Similarly, one can also show
\begin{equation}
\label{eq:(D)_2}
        \prob\bigg( 
    \frac{1}{n} \Big(B_k(x)- \frac{Kn}{\expec[X]}
    \Big)<-\varepsilon
        \bigg)\to 0, 
\end{equation}
which implies that $(B_1-Kn/\expec[X])/n\to 0$
in probability. 

Based on the results of Equation \eqref{eq:(C)} and Equations \eqref{eq:(D)} and \eqref{eq:(D)_2}, we find by Slutsky's theorem \cite[Theorem 11.4]{Gut2012}, 
\begin{equation}
\label{eq:conv1dmM}
      \frac{\expec[X]}{Kn}\M^{(n)}(t) 
      \to \Yl{y}(t).
\end{equation}
Extending the result of \eqref{eq:conv1dmM} is equivalent to that of $Q^{(n)}(t)$ and applying similar techniques as in \eqref{eq:findemQ1} and \eqref{eq:findemQ2} yields the desired result.
\end{proof}

Finally we also show tightness:
\begin{proof}[Proof of Lemma \ref{lem:fluid conv cust count} (b)]
    The proof is a special case of the proof of Lemma \ref{lem:fluid conv workoad} (b) with $X\equiv 1$. 
\end{proof}

\subsection{Proof of Lemma \ref{lem:dif conv workoad} - Diffusion limit for the cumulative weight process}
\label{app:dif proof workload}
We continue with the proof for the diffusion limit for the cumulative weight process.

\begin{proof}[Proof of Lemma \ref{lem:dif conv workoad}]
Recall $Q^{(n)}(t)$ from Equation \eqref{eq:defQn} and its initial conditions in Equation \eqref{eq:diffusion_initial_cond}. We then start with the convergence for fixed $t$. Let $\sigma=\sqrt{t\lambda}$ and $n=k^2\sigma^2/(\lambda K - \nu\expec[X])$, then

\begin{equation}
   \frac{ Q^{(n)}(nt)}{K\sigma n\sqrt{n}} =
   \frac{ 1}{K\sigma n\sqrt{n}}
   \Big[
    \sum_{h=1}^{m(n)}
    X_h
    +\sum_{i=m(n)+1}^{m(n)+A_{n\nu}(nt)}
    X_i
    -\sum_{j=1}^{D_\lambda(nt)}(nK - U_j(nK))
   \Big]
\end{equation}
Observing that $nK-U_j(nK)$ can be stochastically bounded by an a.s. finite random variable, a similar argument to \eqref{eq:(B)} can be applied. Combining this with \eqref{eq:diffusion_initial_cond}, we find 
\begin{equation}
     \frac{ 1}{K\sigma n\sqrt{n}}
     \sum_{j=1}^{D_\lambda(nt)}U_j(nK) \to 0 \qquad
     \text{ and }
     \qquad 
     \frac{ 1}{K\sigma n\sqrt{n}}\sum_{h=1}^{m(n)}
    X_h \to y.
\end{equation}
Therefore, by Slutsky's theorem and the stationarity of the Poisson process, it is sufficient to consider the scaling of 
\begin{equation}
\begin{aligned}
    \frac{ 1}{K\sigma n\sqrt{n}}
   \Big[
    \sum_{i=1}^{A_{n\nu}(nt)}
    X_i
    -D_\lambda(nt)nK 
    \Big]&=
    \frac{ 1}{K\sigma n\sqrt{n}}
   \Big[
    \sum_{i=1}^{A_{n\nu}(nt)}
    X_i-\nu\expec[X]n^2t\Big] + \frac{\sqrt{n}}{K\sigma}\nu\expec[X]t\\&
    -
     \frac{ 1}{K\sigma n\sqrt{n}}
     \Big[ 
     D_\lambda(nt)nK-\lambda K n^2t
     \Big] - \frac{\sqrt{n}}{{\sigma}}\lambda t.
\end{aligned} 
\end{equation}
We notice that the first term converges to 0 by the central limit theorem. Furthermore, by applying Donsker's theorem for the second expression and recalling the definition of $n$, we find by Slutsky's theorem
\begin{equation}
      \frac{ Q^{(n)}(nt)}{K\sigma n\sqrt{n}}
      \xrightarrow{\mathcal{D}} 
      y+0-B_{0,0}(t)-t \equiv B_{y,-1}(t),
\end{equation}
which is a Brownian motion with starting position $y$ and drift $-1$.
\end{proof}

\subsection{Proof of Lemma \ref{lem:dif conv cust count} - Diffusion limit for the transaction count process}
\label{app:dif proof cust count}

\begin{proof}[Proof of Lemma \ref{lem:dif conv cust count}]
Recall $M^{(n)}(t)$ from Equation \eqref{eq:defMn} and its initial conditions in Equation \eqref{eq:diffusion_initial_cond}. We then start with the convergence for fixed $t$. Let $\sigma=\sqrt{t\lambda}$ and $n=K^2\sigma^2/(\lambda K - \nu\expec[X])$, then

\begin{equation}
\label{eq:diffusion_trans_count_1}
\begin{aligned}
   \frac{ \expec[X] M^{(n)}(nt)}{K\sigma n\sqrt{n}} &=
   \frac{\expec[X]}{K\sigma n\sqrt{n}}
   \Big[
    m(n) + A_{n\nu}(nt) - \sum_{j=1}^{D_{\lambda}(nt)} B_{j}(Kn)
   \Big]
    \\&=
   \frac{\expec[X]}{K\sigma n\sqrt{n}}
   \Big[
    m(n) + A_{n\nu}(nt) - 
    D_{\lambda}(nt) \frac{Kn}{\expec[X]}
    -\sum_{j=1}^{D_{\lambda}(nt)} \Big(B_{j}(Kn)-\frac{Kn}{\expec[X]}\Big)
   \Big].
\end{aligned}
\end{equation}
Convergence for the $m(n)$, $A_{n\nu}(nt)$ and $D_\lambda(nt)$ term follow from a similar argument as presented for the diffusion limit of the cumulative weight process and it therefore suffices to derive the diffusion limit of the sum in \eqref{eq:diffusion_trans_count_1}.
However, contrary to the diffusion limit for the cumulative weight process where $U_j(Kn)$ could be bounded by an a.s. finite random variable, the fluctuations from the $B_j(Kn)-Kn/\expec[X]$ term introduce an error in $n$. It is therefore not straightforward that this term disappears in the limit for $n\to\infty$.
For fixed $j$, we observe, by \eqref{eq:trans_count_B_to_sum}
\begin{equation}
\begin{aligned}
    \prob\Big(
    \frac{B_j(Kn)- Kn/\expec[X]}{\sqrt{n}}
    >x\Big) &=
      \prob(
    B_j(Kn)> x\sqrt{n} +Kn/\expec[X])
    \\&= 
    \prob\Big( \sum_{i=1}^{x\sqrt{n} +Kn/\expec[X]+1}
    X_i < Kn\Big).
\end{aligned}    
\end{equation}
By simplifying the expression to a sum of i.i.d. random variables, we can apply a central limit result to continue 
\begin{equation}
\begin{aligned}
    &= 
     \prob\Big( \frac{\sum_{i=1}^{x\sqrt{n} +Kn/\expec[X]+1}
    X_i -\expec[X](x\sqrt{n}+Kn/\expec[X]+1)}{\sigma
    \sqrt{x\sqrt{n} +Kn/\expec[X]+1}} < \frac{Kn-\expec[X](x\sqrt{n}+Kn/\expec[X]+1)}{\sigma
    \sqrt{x\sqrt{n} +Kn/\expec[X]+1}} \Big)
    \\&\to 
    \prob\Big(\mathcal{N}(0,1) < -\frac{\expec[X]x}{\sigma\sqrt{K/\expec[X]}}\Big)
    =\prob(\mathcal{N}(0,C) > x), 
\end{aligned}
\end{equation}
for $C= \sigma^2K/\expec[X]$. For all $j$, we then have
\begin{equation}
    \frac{B_j(Kn) - Kn/\expec[X]}{\sqrt{n}}
    \to 
    \mathcal{N}(0,C).
\end{equation}
Therefore, we find for the sum in \eqref{eq:diffusion_trans_count_1},
\begin{equation}
\begin{aligned}
    \expec\Big[\e^{-\frac{s}{n}\sum_{j=1}^{D_\lambda(nt)} \frac{B_j(Kn)-Kn/\expec[X]}{\sqrt{n}}}\Big]&=
    \expec\Big[ 
    \expec\big[\e^{-\frac{s}{n}\frac{B_j(Kn)-Kn/\expec[X]}{\sqrt{n}}}\big]^{D_{\lambda}(nt)}
    \Big]\\&=
    \expec\big[\e^{-\frac{Cs^2}{2n^2}D_\lambda(nt)}(1+o(1)\big]
    \\&= \e^{-\lambda nt(1-\e^{-\frac{Cs^2}{2n^2}})}(1+o(1))
    \\&=  \e^{-\frac{\lambda t Cs^2}{2n}}(1+o(1)).
\end{aligned}
\end{equation}
Based on this result, we find for \eqref{eq:diffusion_trans_count_1}, based on Donsker's theorem
\begin{equation}
      \frac{ \expec[X] M^{(n)}(nt)}{K\sigma n\sqrt{n}}
      \xrightarrow{\mathcal{D}}
      y+0+B_{0,0}(t) -t\equiv 
      B_{y,-1}(t).
\end{equation}
This concludes the result.
\end{proof}

\section{Numerical results}
\label{sec:results}
We start by giving a visual overview of the  confirmation time density in minutes in Figure \ref{fig:conftimedens}. Here, we fixed the value of $c$ and plotted the density using Equation \eqref{eq:densityut} for $y=0$ and $y=5$.

In Figure \ref{fig:conftimedens}, one notices a saw-tooth like shape, meaning that there are points with almost zero density. These are located at $n/c$, $n=1,2,\ldots $ If one wants to confirm at these time instances or immediately thereafter, instead of one, at least two blocks need to be mined almost simultaneously, which has negligible density. Additionally, in Figure \ref{fig:Stadjeplots}, we provide the tail distribution of $N_y$ for $y=0$ and $y=3$.

\begin{figure}[h]
\centering
\begin{subfigure}{.5\textwidth}
  \centering
  \includegraphics[width=1\linewidth]{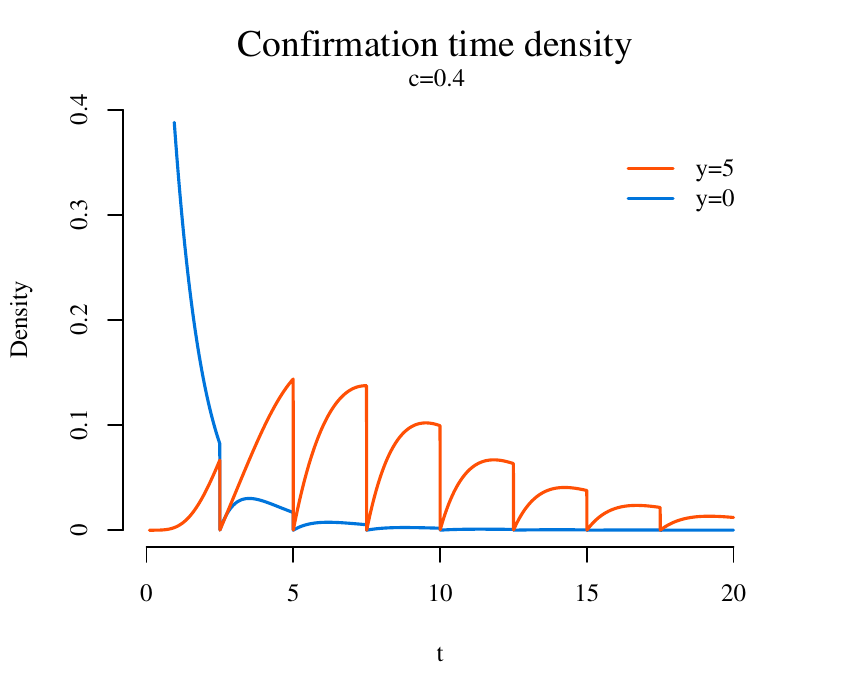}
  \caption{ }
  \label{fig:denssub1}
\end{subfigure}%
\begin{subfigure}{.5\textwidth}
  \centering
  \includegraphics[width=1\linewidth]{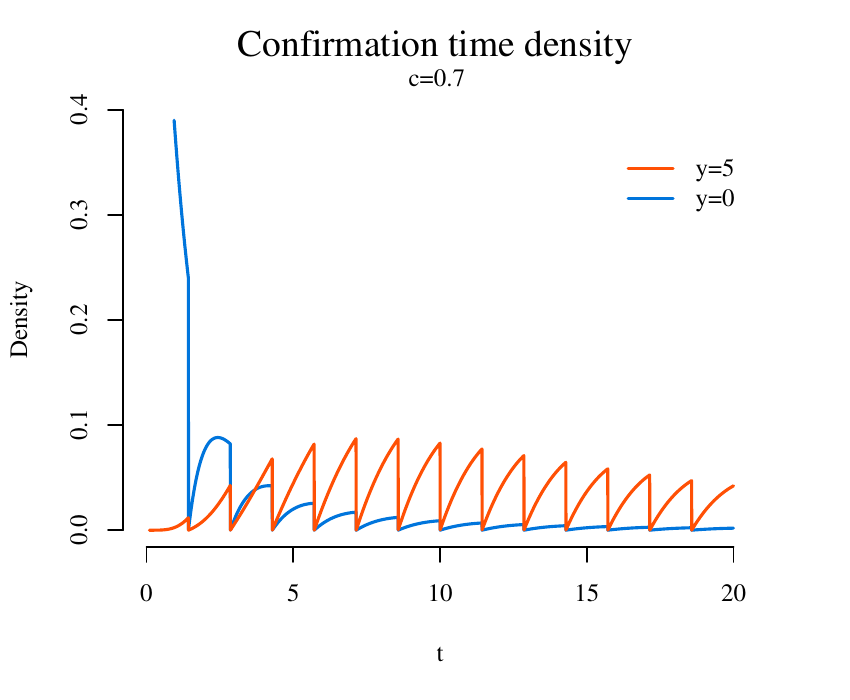}
  \caption{ }
  \label{fig:denssub2}
\end{subfigure}
\caption{The density of  the  confirmation times, c.f.  Equation \eqref{eq:densityut}, for $c=0.4$ (a) and $c=0.7$ (b), and for $y=0$ and $y=5$.}
\label{fig:conftimedens}
\end{figure}

\begin{figure}[h]
\centering
\begin{subfigure}{.45\textwidth}
  \centering
  \includegraphics[width=1\linewidth]{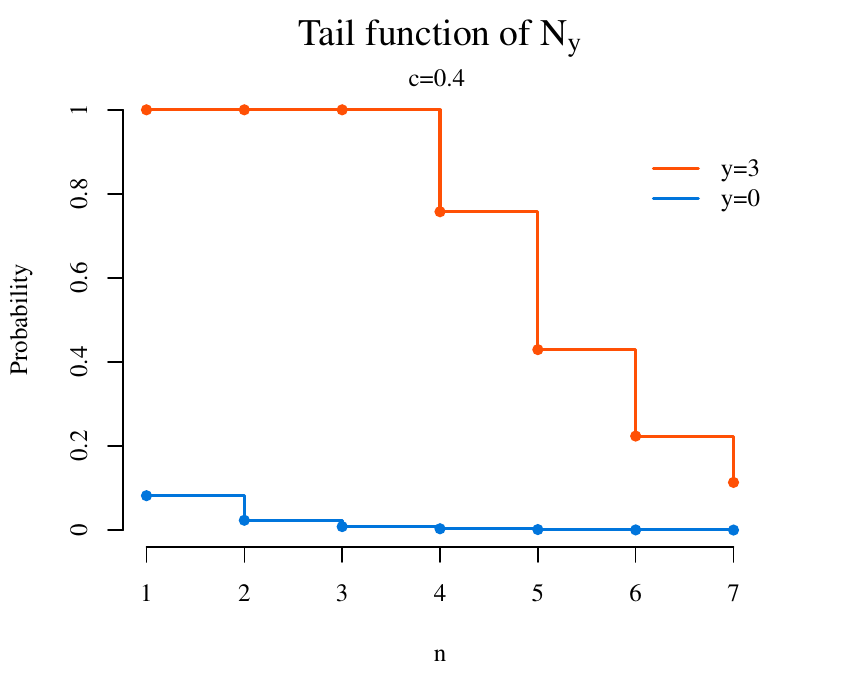}
  \caption{ }
  \label{fig:stadjec=0.4}
\end{subfigure}%
\begin{subfigure}{.45\textwidth}
  \centering
  \includegraphics[width=1\linewidth]{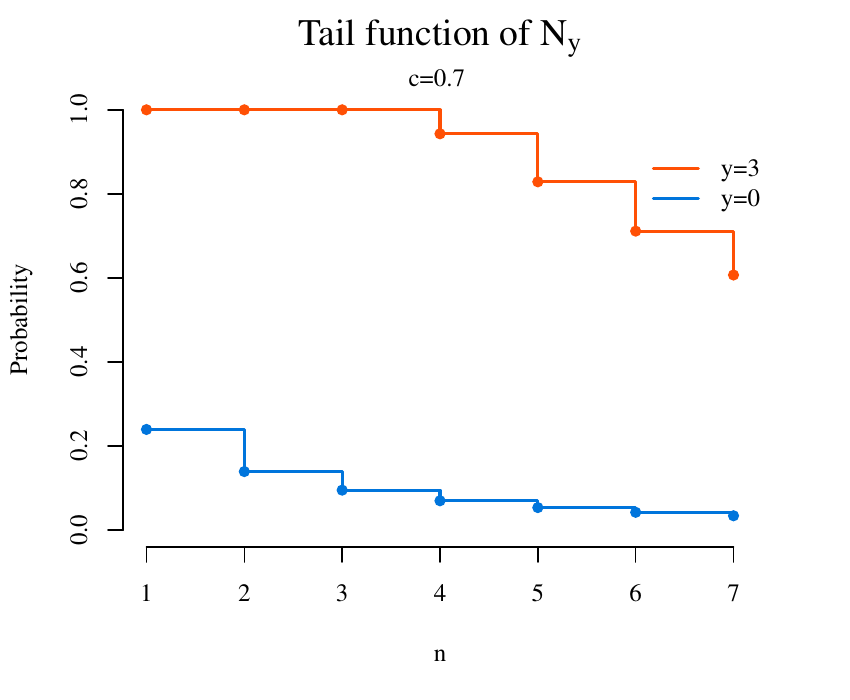}
  \caption{ }
  \label{fig:stadjec=0.7}
\end{subfigure}
\caption{Tail distribution of the number of blocks to confirmation,  c.f. Equation \eqref{eq:stadjey=y}, for different values of $y$.
Note that $\prob[N_y>n]$ equals 1 in case $n< y+1$.}
\label{fig:Stadjeplots}
\end{figure}

Next, we show some numerical results for the mean confirmation time in Figure \ref{fig:plotsEt}. Note that, due to  Theorem \ref{lem:T=N}, it suffices to only consider $\mathbb{E}[T_y]$, as $\mathbb{E}[N_y]=\mathbb{E}[T_y]$. For the computation of the expected confirmation times (as a function of the starting position $y$), for $c=0.2$ and $c=0.9$, we use Equation \eqref{eq:E(T)alg} in combination with Equations \eqref{eq:E(T)} and \eqref{eq:E(t)y<1}.

\begin{figure}[ht]
\centering
  \includegraphics[scale=.5]{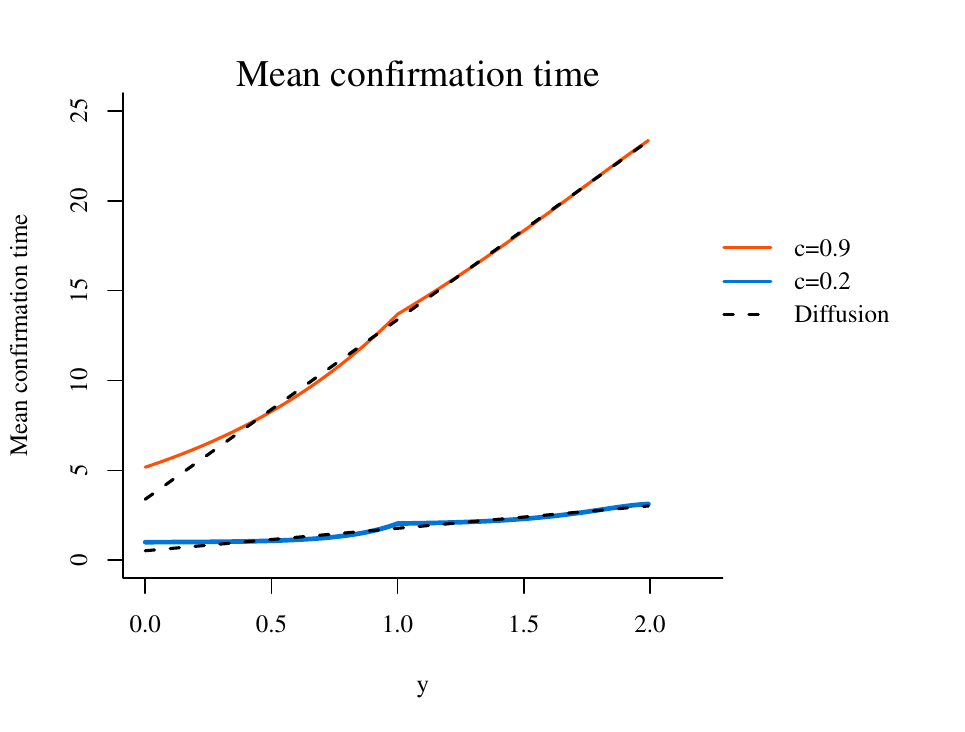}
\caption{The mean confirmation time, c.f. Equations \eqref{eq:E(T)alg} and \eqref{eq:E(t)y<1}, as a function  of the starting position $y$, for $c=0.2$ and $c=0.9$, is depicted with the coloured lines. The corresponding  diffusion approximation, c.f. Equation \eqref{eq:E(t)approx}, is depicted with the dashed black line.}
\label{fig:plotsEt}
\end{figure}

Based on the results in Figure \ref{fig:plotsEt}, we observe that for $y>1$,  the expected confirmation time seems to increase linearly in $y$. 
Intuition for this follows from Remark \ref{rem:diffusionapprox}
 and we overlay the linear approximation arising from the diffusion limit in \eqref{eq:E(t)approx} in Figure \ref{fig:plotsEt}, which seems to represent $\mathbb{E}[T_y]$ well.
We refer to Table \ref{tab:E(T)results}, for a numerical comparison of the theoretical mean and the approximation. Note that for $y<1$, the expression can be computed directly by Equation \eqref{eq:E(t)y<1}, and no approximation is needed.

\begin{table}[ht]
    \centering
    \begin{tabular}{c|cc}
        $c=0.3$ &  T & A\\
        \hline
$y=0.5$&	1.226	&	1.288	\\
$y=1.0$	&2.195	&	2.002	\\
$y=1.5$	&2.655	&	2.716	\\
$y=2.5$	&4.130	&	4.145	\\
$y=3.5$	&5.578	&	5.574	\\
$y=4.5$	&7.006	&	7.002	\\
$y=5.5$	&8.432	&	8.431	\\
$y=6.5$	&9.859	&	9.859	\\
    \end{tabular}
    \quad
    \begin{tabular}{c|cc}
        $c=0.6$ &  T & A\\
        \hline
$y=0.5$	&	2.104	&	2.155	\\
$y=1.0$	&	3.540		&	3.405	\\
$y=1.5$	&	4.624	&	4.655	\\
$y=2.5$	&	7.152	&	7.155	\\
$y=3.5$	&	9.656	&	9.655	\\
$y=4.5$	&	12.156	&	12.155	\\
$y=5.5$	&	14.655	&	14.655	\\
$y=6.5$	&	17.155	&	17.155	\\
    \end{tabular}
    \quad
    \begin{tabular}{c|cc}
        $c=0.9$ &  T &A\\
        \hline
$y=0.5$	&	8.283	&8.392	\\
$y=1.0$	&	13.694	&13.392	\\
$y=1.5$	&	18.335	&18.392	\\
$y=2.5$	&	28.388	&28.392	\\
$y=3.5$	&	38.393	&38.392	\\
$y=4.5$	&	48.392	&48.392	\\
$y=5.5$	&	58.392	&58.392	\\
$y=6.5$	&	68.392	&68.392	\\
    \end{tabular}
    \caption{Comparisons of the theoretical means (T) (c.f.  Equation \eqref{eq:E(T)alg}) to the approximated means (A) (c.f. Equation \eqref{eq:E(t)approx}).}
    \label{tab:E(T)results}
\end{table}
We observe that the approximated means are close to the theoretical ones, especially for $y$ large and can therefore be used as a suitable alternative in case a fast evaluation is needed.

\section{Model validation and prediction - a case study}
\label{sec:case_study}

In this section, we validate our model-based approach to real-world Bitcoin data. To this end, we consider Bitcoin mempool data collected\footnote{Data extracted from a node, maintained by \cite{hoenicke}} over approximately 2 years (19-12-2018 to 15-12-2020). 
This data is used to show the following results:
\begin{enumerate}
    \item First, we \textit{validate the model}. We propose methods for estimating the model parameters $c$ and $y$ from the mempool data. We use this to compare
    the mean confirmation time obtained from the data to the mean confirmation times of the CL model and its heavy-traffic approximation, the BM. The main results are shown in Table \ref{tab:CS1} in Section \ref{sec:cs_validation}.
    \item Secondly, we analyse \textit{the performance of the model}. We compare the model-based method to the performance of a purely data-driven method that closely resembles fee estimators that are used in practice. The main results are shown in Table \ref{tab:CS2} in Section \ref{sec:cs_performance}.
\end{enumerate}
The data and the code used for the data analysis, figures and tables of this section can be found in \cite{code}.

The remainder of this section is structured as follows. 
In Section \ref{sec:data_extraction} we detail the extraction procedure for transaction confirmation times from the mempool data.
Such a procedure deserves extra care as it is not obvious how to extract independent confirmation times with the same parameters ($c$, $y$), which is paramount for a proper and interpretable validation.
We then use the mempool data and extracted confirmation times to support the results in Section \ref{subsec:1modelval}, and the transaction data to support the results of Section \ref{subsec:1modelperf}. This is detailed in Sections \ref{sec:cs_validation} and \ref{sec:cs_performance} respectively.
\subsection{Data extraction}
\label{sec:data_extraction}
The data captures the cumulative weight of mempool transactions clustered by different fee densities\footnote{These ranges are between two sequential values in: [0, 1, 2, 3, 4, 5, 6, 7, 8, 10, 12, 14, 17, 20, 25, 30, 40, 50, 60, 70, 80, 100, 120, 140, 170, 200, 250, 300, 400, 500, 600, 700, 800, 1000, 1200, 1400, 1700, 2000, 2500, 3000, 4000,5000, 6000, 7000, 8000, 10000,$\infty$]. } at discrete time epochs $\{t_j\}_{j=0}^N$ that differ on average by 1 minute\footnote{The time between two sequential epochs is, in 97.4\% of cases between 54 and 66 seconds.}.
These different fee density levels are also called buckets. For any of the aforementioned fee levels $\phi$, one can define the cumulative mempool weight process, i.e. the sum of the weights of all transactions currently in the mempool over time. This allows us to observe the cumulative mempool weight of transactions with a fee density exceeding $\phi$ corresponding to the lower end of a priority bucket over time. Denote this weight at  time $t$ by $\hat Y_\phi(t)$
such that all our observations are denoted by $\{\hat Y_\phi(t_j)\}_{j=0}^N$, and let $X^\phi_j = \hat Y_\phi(t_j) - \hat Y_\phi(t_{j-1})$ denote the increments. When the cumulative mempool weight is zero, it implies that the transaction with fee $\phi$ is confirmed.  

We scale the time unit such that the arrival rate for blocks of the CL model equals 1 unit of time. Since, in the Bitcoin network, the block inter-arrival time has a mean of approximately 10 minutes, it follows that one unit of time corresponds to 10 minutes, and thus we observe the process approximately every 0.1 time unit. Furthermore the cumulative mempool weight is considered in vMB, so that 1 vMB corresponds to the block size.

The main difficulty comes from the discreteness of data. If the mempool became empty at some point, it is unlikely that it remains empty at the next time epoch where we observe the cumulative mempool weight, c.f. Figure \ref{fig:discrete-problem-example}. 
To overcome this, we define a 
function \texttt{Get\_confirmation\_time} (also abbreviated by function $\texttt{G}(\{Y_\phi(t_j)\}_{j=j_0}^{j_{\max}},\hat c_\phi)$) that extracts a confirmation time from the mempool data $\{Y_\phi(t_j)\}_{j=j_0}^{j_{\max}}$ when given an estimated slope $\hat c_\phi$.
It uses a criterion for the mempool being empty between time epochs, called the \textit{$\varepsilon$-criterion}. Intuitively, the $\varepsilon$-criterion induces a confirmation time, only when,  in the next epoch, a decrease of the mempool is observed and at that time the mempool size is at most $B-\varepsilon$, see Algorithm \ref{alg:getconftime} line 3 for an implementation for $\varepsilon=\hat c_\phi/10$.

\begin{algorithm}[ht]
\caption{\texttt{Get\_confirmation\_time}: \texttt{G}$(\{Y_\phi(t_j)\}_{j=j_0}^{j_{\max}},\hat{c}_\phi)$ }
\begin{algorithmic}[1]
\State Input: $\{Y_\phi(t_j)\}_{j=j_0}^{j_{\max}}$, $\hat{c}_\phi$
\For{$j$ in $\{j_0,\ldots ,j_{\max}\}$}
\If{$\hat{Y}_\phi(t_{j-1})< B -\varepsilon,\
    \hat{Y}_\phi(t_{j}) < \hat{Y}_\phi(t_{j-1})$}
\State $j_{end} = j$
\State break
\EndIf 
\EndFor
\If{$j_{end}>j_0$}
\State return($j_{end}-j_0$)
\Else 
\State return( No confirmation time)
\EndIf 
\end{algorithmic}
\label{alg:getconftime}
\end{algorithm}

\begin{figure}[ht]
\centering
\begin{minipage}{.45\textwidth}
\tikzset{every picture/.style={line width=0.75pt}} 
\begin{tikzpicture}[x=0.75pt,y=0.75pt,yscale=-1,xscale=1, baseline=(XXXX.south) ]
\path (0,223);\path (510,0);\draw     node [anchor=south] (XXXX) {};
\draw  (72,197.61) -- (252.4,197.61)(90.04,62.08) -- (90.04,212.67) (245.4,192.61) -- (252.4,197.61) -- (245.4,202.61) (85.04,69.08) -- (90.04,62.08) -- (95.04,69.08)  ;
\draw    (90,157.75) -- (132.86,132.36) -- (180.07,103.48) ;
\draw    (180.07,103.48) -- (179.73,197.81) ;
\draw    (179.73,197.81) -- (222.59,172.43) -- (234.07,165.15) ;
\draw  [color={rgb, 255:red, 208; green, 2; blue, 27 }  ,draw opacity=1 ][fill={rgb, 255:red, 208; green, 2; blue, 27 }  ,fill opacity=1 ] (107.86,146.05) .. controls (107.86,144.63) and (109.01,143.48) .. (110.43,143.48) .. controls (111.85,143.48) and (113,144.63) .. (113,146.05) .. controls (113,147.47) and (111.85,148.63) .. (110.43,148.63) .. controls (109.01,148.63) and (107.86,147.47) .. (107.86,146.05) -- cycle ;
\draw  [color={rgb, 255:red, 208; green, 2; blue, 27 }  ,draw opacity=1 ][fill={rgb, 255:red, 208; green, 2; blue, 27 }  ,fill opacity=1 ] (127.86,134.05) .. controls (127.86,132.63) and (129.01,131.48) .. (130.43,131.48) .. controls (131.85,131.48) and (133,132.63) .. (133,134.05) .. controls (133,135.47) and (131.85,136.63) .. (130.43,136.63) .. controls (129.01,136.63) and (127.86,135.47) .. (127.86,134.05) -- cycle ;
\draw  [color={rgb, 255:red, 208; green, 2; blue, 27 }  ,draw opacity=1 ][fill={rgb, 255:red, 208; green, 2; blue, 27 }  ,fill opacity=1 ] (147.86,121.72) .. controls (147.86,120.3) and (149.01,119.15) .. (150.43,119.15) .. controls (151.85,119.15) and (153,120.3) .. (153,121.72) .. controls (153,123.14) and (151.85,124.29) .. (150.43,124.29) .. controls (149.01,124.29) and (147.86,123.14) .. (147.86,121.72) -- cycle ;
\draw  [color={rgb, 255:red, 208; green, 2; blue, 27 }  ,draw opacity=1 ][fill={rgb, 255:red, 208; green, 2; blue, 27 }  ,fill opacity=1 ] (167.19,110.05) .. controls (167.19,108.63) and (168.34,107.48) .. (169.76,107.48) .. controls (171.18,107.48) and (172.34,108.63) .. (172.34,110.05) .. controls (172.34,111.47) and (171.18,112.63) .. (169.76,112.63) .. controls (168.34,112.63) and (167.19,111.47) .. (167.19,110.05) -- cycle ;
\draw  [color={rgb, 255:red, 208; green, 2; blue, 27 }  ,draw opacity=1 ][fill={rgb, 255:red, 208; green, 2; blue, 27 }  ,fill opacity=1 ] (187.52,191.72) .. controls (187.52,190.3) and (188.68,189.15) .. (190.1,189.15) .. controls (191.52,189.15) and (192.67,190.3) .. (192.67,191.72) .. controls (192.67,193.14) and (191.52,194.29) .. (190.1,194.29) .. controls (188.68,194.29) and (187.52,193.14) .. (187.52,191.72) -- cycle ;
\draw  [color={rgb, 255:red, 208; green, 2; blue, 27 }  ,draw opacity=1 ][fill={rgb, 255:red, 208; green, 2; blue, 27 }  ,fill opacity=1 ] (207.86,179.72) .. controls (207.86,178.3) and (209.01,177.15) .. (210.43,177.15) .. controls (211.85,177.15) and (213,178.3) .. (213,179.72) .. controls (213,181.14) and (211.85,182.29) .. (210.43,182.29) .. controls (209.01,182.29) and (207.86,181.14) .. (207.86,179.72) -- cycle ;
\draw  [color={rgb, 255:red, 208; green, 2; blue, 27 }  ,draw opacity=1 ][fill={rgb, 255:red, 208; green, 2; blue, 27 }  ,fill opacity=1 ] (228.52,167.72) .. controls (228.52,166.3) and (229.68,165.15) .. (231.1,165.15) .. controls (232.52,165.15) and (233.67,166.3) .. (233.67,167.72) .. controls (233.67,169.14) and (232.52,170.29) .. (231.1,170.29) .. controls (229.68,170.29) and (228.52,169.14) .. (228.52,167.72) -- cycle ;
\draw  [color={rgb, 255:red, 208; green, 2; blue, 27 }  ,draw opacity=1 ][fill={rgb, 255:red, 208; green, 2; blue, 27 }  ,fill opacity=1 ] (183.19,67.05) .. controls (183.19,65.63) and (184.34,64.48) .. (185.76,64.48) .. controls (187.18,64.48) and (188.34,65.63) .. (188.34,67.05) .. controls (188.34,68.47) and (187.18,69.63) .. (185.76,69.63) .. controls (184.34,69.63) and (183.19,68.47) .. (183.19,67.05) -- cycle ;
\draw    (174,46) -- (197,46) ;
\draw (204,41) node [anchor=north west][inner sep=0.75pt]   [align=left] 
{\footnotesize Continuous};
\draw (204,62) node [anchor=north west][inner sep=0.75pt]   [align=left] {\footnotesize Discrete};
\draw (46.67,40.67) node [anchor=north west][inner sep=0.75pt]   [align=left] {{\footnotesize Mempool size}};
\draw (258.67,187.67) node [anchor=north west][inner sep=0.75pt]   [align=left] {$\displaystyle t$};
\draw (176.67,199.67) node [anchor=north west][inner sep=0.75pt]   [align=left] {$\displaystyle t^*$};
\end{tikzpicture}
\caption{Example of a case where the mempool process of transactions with fee density larger than $\phi$, when observed continuously (solid line), hits 0 at time $t^*$ and a transaction with fee density $\phi$ is thus confirmed. However, if we were to observe this process at discrete instances (dots) instead, we would never observe the process hit 0 exactly.}\label{fig:discrete-problem-example}
\end{minipage}
\hfill
\begin{minipage}{.45\textwidth}
\includegraphics[width=\linewidth]{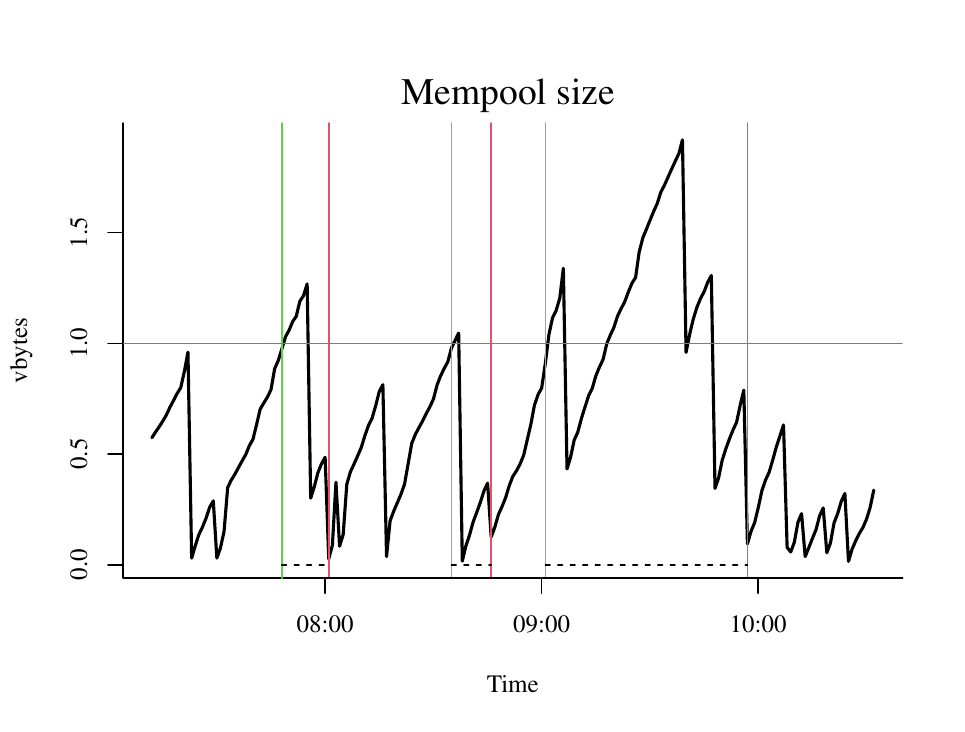}
\caption{
Illustration of how different confirmation times are extracted from the observed mempool process on 03-11-2019. The dark blue line represents $\hat y_\phi=1$, where the confirmation time starts. Green vertical lines denote the initialisation where the process passes through $\hat y_\phi$ and the red lines denote the first time when the process satisfies the  confirmation time criterion in line 3 of Algorithm \ref{alg:getconftime}. The dotted lines indicate the three extracted confirmation times resulting from this procedure. 
}
\label{fig:extraction-example}
\end{minipage}
\end{figure}

\subsection{Details on the model validation}
    \label{sec:cs_validation}

In the following part, for fixed fee density $\phi$, we explain how to extract a confirmation time, say $\hat T_\phi(\hat y_\phi,\hat c_\phi)$, from the mempool data for the same strategically chosen starting position $\hat y_\phi$ and slope $\hat c_\phi$ to obtain as many observations as possible. 
By requiring the extracted confirmation times to have the same parameters, we can present meaningful metrics for comparison, such as a histogram and the mean confirmation time. These can be compared to the model-based performance metrics as a validation measure.
We work with a \textit{global estimator} $\hat c_{\phi}$ for the slope, defined by the sum of all increments divided by the sum of their respective time intervals, where we exclude intervals in which a block has been found:
\begin{equation}
    \label{eq:est_c_full}
    \hat c_{\phi} =  \frac{\sum_{j\geq 1} X^\phi_j\1\{X^\phi_j > 0\}}{\sum_{j\geq 1}(t_j - t_{j-1})\1\{X^\phi_j > 0\} },
\end{equation}
and focus on confirmations of transactions that are sent in the system when the cumulative mempool weight is at median height $m(\cdot)$\footnote{$\hat y_\phi$ is taken fixed, in order to obtain confirmation times that are hypothesised to be of the same distribution. 
While the median is in essence an arbitrary choice, with it, we expect to find the most confirmation times.
},
\begin{equation}
    \label{eq:est_y}
    \hat y_\phi = m \left( \left\{\hat Y_\phi(t_j) \right\}_{j=0}^N\right).
\end{equation}

Concretely, we loop over the data, starting a confirmation time when the cumulative mempool weight process crosses $\hat {y}_\phi$ from below for the first time. 
Then using \texttt{Get\_confirmation\_time}, we obtain the first confirmation time. Repeating this the next time we observe  the cumulative mempool weight process crossing $\hat{y}_\phi$ after the previous confirmation time, we extract non-overlapping confirmation times. 

Note that, over a large time scale, the rate $\hat{c}_\phi$ of incoming transactions with fee density larger than $\phi$ may change. A change in the rate will result in confirmation times not necessarily following the hypothesized distribution $T(\hat y_\phi,\hat c_\phi)$. To safeguard ourselves against this, after extracting the confirmation times from the data according to the criterion detailed above, we consider only the ones for which a \textit{local} estimate of the rate of incoming transactions with fee density larger than $\phi$ is sufficiently close to the global estimator $\hat c_\phi$. For the $i$-th confirmation time that starts at $t_{j_1}$ and ends at $t_{j_2}$, we estimate the rate locally as:
\begin{equation}
    \label{eq:est_c_on_data}
    \hat c^*_{\phi,i} = \frac{ \sum_{j=j_1+1}^{j_2} X^\phi_j\1\{X^\phi_j > 0\}}{ \sum_{j=j_1+1}^{j_2}(t_j - t_{j-1})\1\{X^\phi_j > 0\} }.
\end{equation}
In our results, we only consider observations where local estimates differ at most $\delta=0.05$ from the global estimate $\hat c_\phi$. This procedure is detailed in 
Algorithm \ref {alg:extract_conf_validation}.

\begin{minipage}{0.47\textwidth}
\begin{algorithm}[H]
\centering 
\caption{Extract validation sample for fixed $\phi$}
    \begin{algorithmic}[1]
        \State \textbf{Input}: $\{Y_\phi(t_j)\}_{j=0}^N$
        \State  Estimate $\hat c_\phi $ by Equation \eqref{eq:est_c_full}
        \State Estimate $\hat y_\phi$ by Equation \eqref{eq:est_y}
        \State $j=0$
        \While{$j < N$}
        \State $j_0=\inf\{i>j: Y_\phi(t_{i})<\hat y_\phi \wedge Y_\phi(t_{i+1})>\hat y_\phi\}$
        \State $T(\hat y_\phi, \hat c_\phi) =$ \texttt{G}($\{Y(t_k)\}_{k=j_0}^N),\hat c_\phi$)
        \State Estimate $\hat c_{\phi}^*$ by \eqref{eq:est_c_on_data} on $\{Y_\phi(t_k)\}_{k=j_0}^{j_0+T(\hat y_\phi, \hat c_\phi)}$
        \If{ $\hat c_{\phi}^* \in (\hat c_\phi-\delta, \hat c_\phi+\delta)$}
        \State Keep $T(\hat y_\phi, \hat c_\phi)$ in confirmation times
        \Else{ 
        Dispose $T(\hat y_\phi, \hat c_\phi)$}
        \EndIf 
        \State $j=j+ T(\hat y_\phi, \hat c_\phi)$
        \EndWhile
        \State \textbf{return} confirmation times
    \end{algorithmic}
    \label{alg:extract_conf_validation}
\end{algorithm}
\vspace{0.1cm}
\end{minipage}
\hfill
\begin{minipage}{0.45\textwidth}
    \begin{algorithm}[H]
    \centering
    \caption{Extract optimal bucket for fixed $t_i$}
\begin{algorithmic}[1]
  \State  \textbf{Input}:
    $\{(Y_\phi(t_j)\}_{j=j_i-2500}^{N}$
    \For{$\phi \in \{\phi_1,\ldots\phi_n$\}}
    \For{$i$ in $1:7500$}
    \State Sample $t_0$ uniformly between $[j_i-2500,j_i]$ 
    \State Save  $T_\phi =$ 
    \texttt{G}($\{(Y_\phi(t_j)\}_{j=j_i-2500}^{j_i}$,0)
    \EndFor 
    \State Determine $\hat F_\phi$ based on $\{T_{\phi,j}\}_{j=1}^{7500}$
    \State Determine $\hat c$  based on \eqref{eq:c_est_pred} and $\{T_{\phi,j}\}_{j=1}^{7500}$
    \State Determine $\tilde F$
    \EndFor
    \State Determine $\hat b$ based on Equation \eqref{eq:hatb}
    \State Determine $\tilde b$ based on
    Equation \eqref{eq:tildeb}
    \State Determine $b^*(t_j)$ based on Equation \eqref{eq:bstar}
    \State \textbf{return} $\hat b$, $\tilde b$
    and $b^*$
\end{algorithmic}
\label{alg:scores}
\end{algorithm}
\vspace{0.1cm}
\end{minipage}

From Table \ref{tab:CS1}, we observe that, in general, model-based estimations approximate the real-world transaction data decently well in terms of the mean. Deviations are mainly due to under-estimations by the model-based approach, implying the possibility of an unknown underlying mechanism that was not accounted for. However, in the higher cost buckets, such deviations cause an error of only a couple of minutes, while for lower cost buckets, estimations tend to fall within the confidence limits of the estimated mean. This implies that the model-based method is usable in practice.

\begin{table}
\centering
\begin{tabular}{c c c c c c  c}
  \hline
$\phi$ & $\hat\mu\in(LCL,UCL)$ & $\mu=\mathbb{E}[T_y]$& $\tilde{\mu}$ (BM)& $n$ & $\hat y_\phi$ & $\hat c_\phi$ \\ 
  \hline
  2 &$ 100.1 \in ( 66.16 , 185.4 )$ & 83.22&   82.95     & 191 & 4.35 & 0.89 \\ 
  3 &$ 29.56 \in ( 22.36 , 56.39 )$ & 19.73 &19.69       & 368 & 2.36 & 0.81 \\ 
  4 &$ 16.98 \in ( 13.80 , 23.80 ) $&11.76 & 11.72       & 527 & 1.74 & 0.77 \\ 
  5 &$ 7.93 \in ( 6.91 , 9.63 )$ & 7.20&    7.17         & 540 & 1.30 & 0.72 \\ 
  6 &$ 6.80 \in ( 5.91 , 8.30 ) $& 5.72 &   5.70           & 640 & 1.12 & 0.69 \\ 
  7 &$ 5.52 \in ( 4.87 , 6.66 )$ & 5.14&    5.11           & 849 & 1.00 & 0.68 \\ 
  8 & $4.82 \in ( 4.36 , 5.50 ) $& 4.42&   4.40           & 1007 & 0.91 & 0.66 \\ 
  10 & $4.37 \in ( 3.98 , 4.94 )$ & 3.99&  3.98        & 1174 & 0.85 & 0.65 \\ 
  12 & $3.53 \in ( 3.25 , 3.93 )$ & 3.35&  3.33       & 1519 & 0.75 & 0.63 \\ 
  14 & $2.97 \in ( 2.76 , 3.27 )$ & 2.67& 2.66      & 1998 & 0.65 & 0.59 \\ 
  17 & $2.69 \in ( 2.54 , 2.90 ) $& 2.36&  2.34       & 2398 & 0.58 & 0.57 \\ 
  20 & $2.24 \in ( 2.14 , 2.37 )$ & 2.01&  2.00        & 3005 & 0.49 & 0.54 \\ 
  25 & $1.98 \in ( 1.89 , 2.08 )$ & 1.71& 1.70            & 3578 & 0.41 & 0.50 \\ 
  30 & $1.63 \in ( 1.57 , 1.70 ) $& 1.41& 1.40        & 4242 & 0.30 & 0.44 \\ 
  40 & $1.48 \in ( 1.42 , 1.54 ) $& 1.27&  1.26       & 4425 & 0.23 & 0.40 \\ 
  50 & $1.27 \in ( 1.23 , 1.32 ) $& 1.14&  1.14        & 4425 & 0.16 & 0.34 \\ 
  60 & $1.20 \in ( 1.16 , 1.24 ) $& 1.09&  1.08       & 4374 & 0.12 & 0.30 \\ 
  70 & $1.12 \in ( 1.08 , 1.15 )$ & 1.05&  1.04       & 4367 & 0.08 & 0.26 \\ 
   \hline
\end{tabular}
	\caption{The column $\phi$ captures the various fee densities in sat/vbyte. The column $\hat{\mu}$ captures the sample mean in 10 minutes of the Bitcoin  confirmation time data for various values of $\phi$. Included are the lower and upper confidence limits (LCL and UCL, respectively). The confidence limits are computed as nonparametric bootstrap confidence intervals using Efron's bootstrap (see \cite[Section 14.3]{Efron1994}) with $10^5$ bootstrap samples. The last three columns present the number of extracted confirmation times ($n$), the starting position ($\hat y_\phi$ in vMB) and the slope ($\hat c_\phi$ in vMB per 10 minutes). The column $\mu=\mathbb{E}[T_y]$ and $\tilde \mu(BM)$ capture, for the purpose  of comparison, the theoretical expectation of the  confirmation time of the CL and the diffusion approximation respectively with block mean inter-arrival time of 1.010406*10 minutes and block size of $B=0.956$ vMB.}
	\label{tab:CS1}
 \end{table}

\subsection{Details on the model performance}
\label{sec:cs_performance}
Theorem \ref{thm:diffusion_limit}
showed that in addition to the CL, the BM can be used to determine the confirmation times. This was also validated in Section \ref{sec:cs_validation}. 
In the following part we show that it also does the prediction better than a data-driven approach that closely resembles \texttt{estimatesmartfee}, an established fee estimator.

This data-driven method collects \textit{transaction confirmation time data} for different buckets $\phi_1,...,\phi_n$ to construct empirical cdf-s $\hat F_{\phi_1},...,\hat F_{\phi_n}$. Then, for a given maximum time $t^*$, it determines the optimal bucket $\hat b$ as the cheapest bucket so that confirmation before time $t^*$ occurs with probability at least $0.95$, i.e.:
\begin{equation}
\label{eq:hatb}
 \hat b = \min\big\{\phi \in \{\phi_1,...,\phi_n\}: 1-\hat F_\phi(t^*)<0.05\big\}.
\end{equation}
Our method works similarly but instead uses the inverse Gaussian distribution $\tilde F_{c(\phi),y(\phi)}=\tilde F_\phi$. Then for maximum time $t^*$ and $y$ that is observed from the data it determines the optimal bucket $\tilde b$ by
\begin{equation}
\label{eq:tildeb}
 \tilde b = \min\big\{\phi \in \{\phi_1,...,\phi_n\}: 1-\tilde F_\phi(t^*)<0.05\big\}.
\end{equation}

It remains to show how $c$ is determined.
As the $\hat b$ is determined by only transaction data $\{T_1,...,T_n\}$, we also only use transaction data for a fair comparison.
For each bucket, we use maximum likelihood estimators for the parameters  of the inverse Gaussian \cite[Equation 8]{Folks1978} to find
\begin{equation}
    \frac{ y+\expec[S_y]}{1-c} \approx 
    \frac{1}{n}\sum_{i=1}^n T_i,
\end{equation}
and
\begin{equation}
    (y+\expec[S_y])^2 \approx \left( 
    \frac{1}{n}\sum_{i=1}^n \left(\frac{1}{T_i}-\frac{1}{ \frac{1}{n}\sum_{i=1}^n T_i}\right)
    \right)^{-1}.
\end{equation}
This can be solved for $c$ to give the estimator 
\begin{equation}
\label{eq:c_est_pred}
    \hat c = 1- \frac{ \left( \frac{1}{n}\sum_{i=1}^n \left(\frac{1}{T_i}-\frac{1}{ \frac{1}{n}\sum_{i=1}^n T_i}\right)\right)^{-1/2} }{  \frac{1}{n}\sum_{i=1}^n T_i }.
\end{equation}
We next describe how to compare $\hat b$ and $\tilde b$.
In the data we consider 70 time instances $(t_{j_1},...,t_{j_{70}})$ with on average a week between sequential time instances. For each instance $i$ we define the \textit{test-period}, as the period of approximately 42 hours before $t_{j_i}$ until $t_{j_i}$. 
During this test period, for every bucket $j$, we extract $N$ confirmation times, which we do as follows. First, draw a point uniformly in the test-period and define the confirmation time based on \texttt{Get\_confirmation\_time} as described in Algorithm \ref{alg:getconftime}, with $\varepsilon=0$. 
For every bucket, we repeat this $N$ times to obtain a sample $(T^{(j_i)}_k)_{k=1}^N$. Then, for a given maximum time $t^*$, we determine the optimal bucket $b^*$ by 
\begin{equation}
\label{eq:bstar}
b^*(t_{j_i}) = \min\big\{\phi \in \{\phi_1,...,\phi_n\}: \texttt{Get\_confirmation\_time}(\{Y_\phi(t_j)\}_{j=j_i}^N,0) <t^*  \big\}.
\end{equation}

Algorithm \ref{alg:scores} gives an overview how the optimal buckets can be derived from the data for a fixed time instance. 
In Figure \ref{fig:scores_mb}, \ref{fig:scores_dd} and \ref{fig:box_plot_scores} we plot the \textit{scores} of the model-based ($s_{M}$) approach and the data-driven ($s_{D}$) approach. The scores are defined by 
\begin{equation}
s_{M}(t_{j_i}) = \tilde{b} - b^*(t_{j_i})
\qquad \text{ and } \qquad 
s_{D}(t_{j_i}) = \hat{b} - b^*(t_{j_i}).
\end{equation}
Thus a score of 0 implies that the predicted optimal bucket coincides with the true optimal bucket, a positive score implies an \textit{overpay} of fee, and a negative score implies that the transaction confirmation time in practice had exceeded $t^*$, and therefore was confirmed too \textit{late}. In Table \ref{tab:CS2} we provide details on the number of transactions that were too late and were overpaid for (and to what degree).

Based on Figures \ref{fig:scores_mb}--\ref{fig:box_plot_scores} we observe that our model-based predictions perform stronger in general than the data-driven approach. Only in the number of \textit{late} transactions, the data-driven approach outperforms the model-based approach, but at the cost of overpaying on many transactions by a significant amount.
Based on this, the model-based method is recommended unless the user is willing to risk overpaying to mitigate a possible late transaction.

\begin{table}[ht]
    \centering
    \begin{tabular}{ccc}
         & Model-based & Data-driven \\
         \hline
       \% Optimal &46& 11\\
       \% Late (mean time late, given late)& 21 (9.0) & 4 (90.1)\\
       \% Overpay (mean overpay, given overpay)& 33 (39.2) & 84 (31.8)
    \end{tabular}
    \caption{Summary statistics on the performance of the model-based and data-driven methods for 70 instances. The mean time late of the data-driven method is heavily influenced by an outlier of value 259.}
    \label{tab:CS2}
\end{table}

\begin{figure}[ht]
\centering
\begin{minipage}{.45\textwidth}
\includegraphics[width=\linewidth]{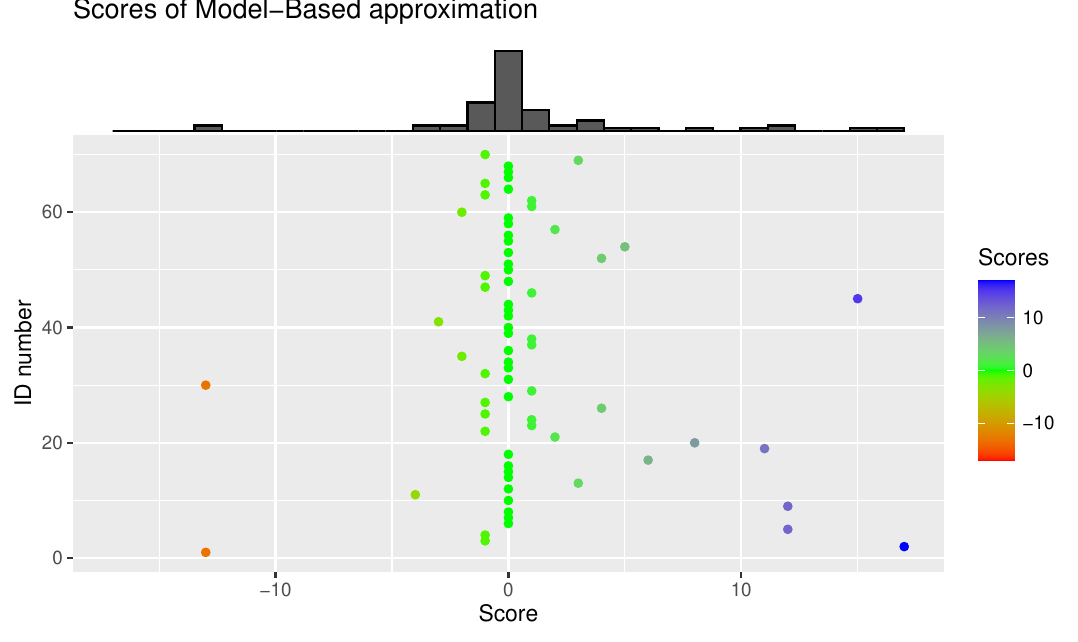}
\caption{Scores of the model-based approach ($s_M(t_{j_i})$). The $y$-axis denotes the ID number ($i$) of the 70 times instances and the $x$-axis denotes the score. Here, $t^*=5(\times 10m)$.}\label{fig:scores_mb}
\end{minipage}
\hfill
\begin{minipage}{.45\textwidth}
\includegraphics[width=\linewidth]{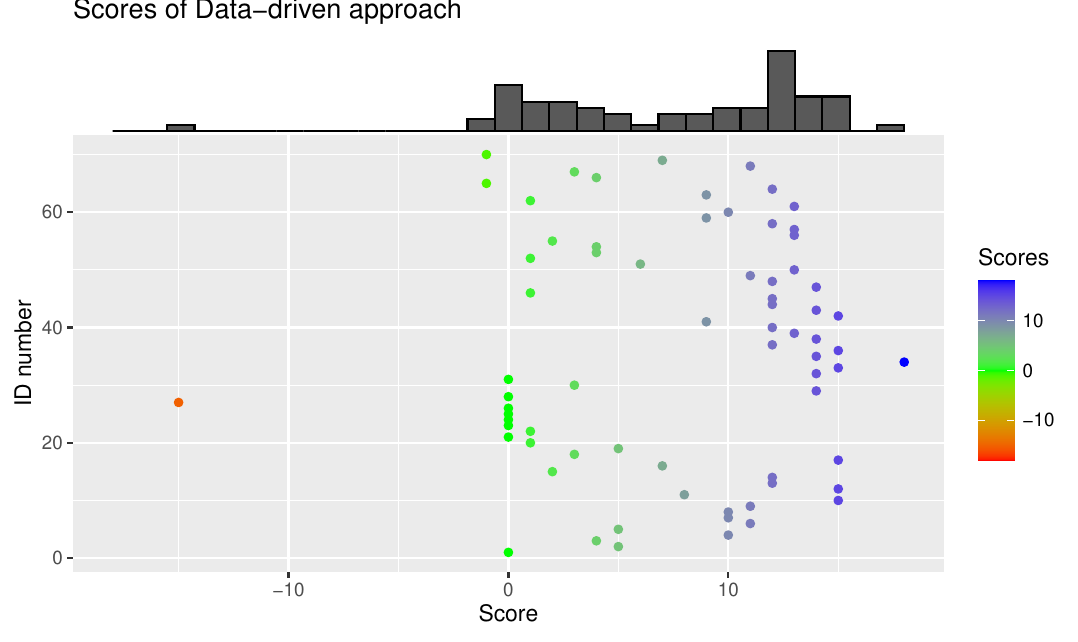}
\caption{Scores of the data-driven approach ($s_D(t_{j_i})$). The $y$-axis denotes the ID number ($i$) of the 70 times instances and the $x$-axis denotes the score. Here, $t^*=5(\times 10m)$.}\label{fig:scores_dd}
\end{minipage}
\\
\begin{minipage}{.67\textwidth}
\includegraphics[width=\linewidth]{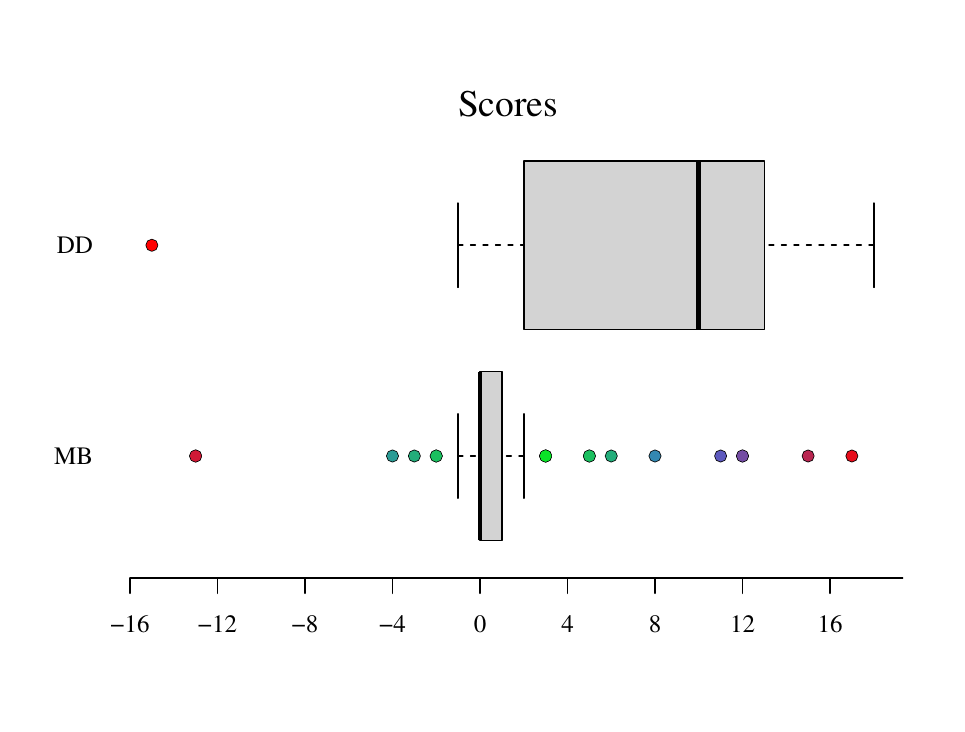}
\caption{Box plot of the scores of the data-driven approach  (top) and the model-based approach  (bottom). }\label{fig:box_plot_scores}
\end{minipage}
\end{figure}

\section{Conclusion}
\label{sec:conclusion}
In this work, we study confirmation times in proof-of-work based cryptocurrencies (such as Bitcoin) via a holistic model-based approach for a ready-to-use optimal fee prediction method. Our work builds on three niches: 

Firstly, we consider a theoretical model (stochastic process) that captures the cumulative mempool weight. We consider the same stochastic process as in \cite{Gundlach2021}: The cumulative mempool weight process is modelled by a stochastic process that grows linearly over time and at random instances (corresponding to  the mining of a block) it jumps down by a fixed amount (the amount corresponds to the size of a block). This  is a special case of the so-called Cram\'er-Lundberg model, a model fundamental to actuarial sciences. Here, a confirmation time of a Bitcoin transaction coincides with the time required for the stochastic process to hit 0 for the first  time (in the actuarial  science jargon, this  is the  so-called time to ruin). 
We derive explicit expressions for the confirmation time distribution, both in minutes and in the number of blocks. We also give an explicit expression for the mean confirmation time and demonstrate that this quantity (as a function of the initial mempool weight / starting position, say $y$ and the rate transactions accumulate over time, say $c$) can be closely approximated by a linear function of $y$ and $c$, c.f. Equation \eqref{eq:E(t)approx}, which is trivially fast to evaluate.

Secondly, we relate the batch-service queue (BSQ), a model that has been proposed for Bitcoin confirmation times in the literature, to the Cram\'er-Lundberg (CL) model via a fluid limit and to the Brownian motion (BM) via a diffusion limit.
We show, under a feasible scaling of the BSQ, that, whether one considers the transaction count or the cumulative weight, the confirmation times of the BSQ converge to the confirmation times of the CL model and BM in fluid and diffusion limiting regime respectively. We also argue that the necessary scaling is realistic in practice. Our model fits therefore well within the state-of-the-art stochastic models for Bitcoin confirmation times.

Thirdly, we validate and compare the proposed model to confirmation times extracted from real-world Bitcoin data. We compare the empirical distribution and the sample mean of the extracted  confirmation times to those obtained from the model,  and demonstrate that they are relatively close. Furthermore, we show that our model outperforms purely data-driven approaches. Therefore, a user can, after observing the model parameters, predict its confirmation time (distribution and mean) and use this to determine an optimal fee.

In this work, we  focus the analysis on Bitcoin. However, the analysis extends to other pay-for-speed applications where the many small jobs or requests arrive in a short time span and are served in large batches. Even in instances where there may be a certain dependence of the arrival and servings of jobs, our models provides intuitive and easy methods to determine the optimal pay for different speed benchmarks.

\paragraph{Acknowledgements}

The authors thank Onno Boxma for his feedback and suggestions. RG and SK are supported by the Netherlands Organisation for Scientific Research (NWO) through Gravitation-grant NETWORKS-024.002.003.

\newpage

\bibliographystyle{apalike}
\bibliography{Zbib}

\begin{appendices}

\section{Alternative proof for the tail distribution of the number of blocks to confirmation}
\label{app:Alternative proof Exp conf time}
In the following, we provide an alternative proof to Lemma \ref{prop:ET}. Here, we link the number of blocks to confirmation to the number of customers in a busy period in a dual D/M/1 queue.

Let the time between arrivals of blocks be denoted by $\{A_i\}_{i\geq0}$.
In the CL model, between two consecutive block minings, the mempool size has increased by $cA_i$ and it has decreased by $1$. The former is over a continuous period of time (continuous creeping of the process), while the latter is due to a jump. However, it is possible to interchange these to obtain a process where jumps occur every $1$ time unit and the jumps have a stochastic size of $cA_i$, which is again exponentially distributed. This new process, in between jumps, decreases linearly with slope $1$ and at a jump it increases by $cA_i$. This switched process then models the workload of a D/M/1 queue. We refer to Figure \ref{fig:Queueingduality} for a visual comparison.

Denote with $Q_y$ the number of arrivals within a busy period of a D/M/1 queue (with the aforementioned arrival and service rates) that starts with an initial amount of work $y$ and the arrival of the first customer at $t=0$. We next formalise the relation of the number of customers in the busy period to the number of blocks to confirmation as described above: 
\begin{figure}[hbt]
    \centering
    \includegraphics[scale=0.7]{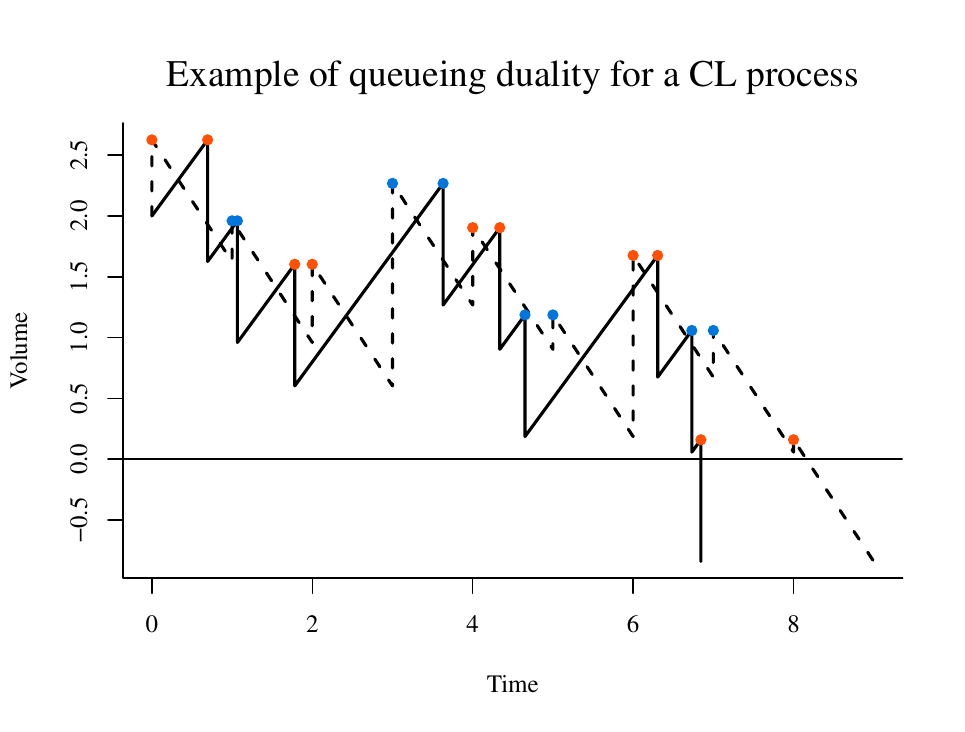}
    \caption{Example of the queueing duality for the CL model. Note that all peaks occur at the same height, but at different points in time (indicated by matching colours). Furthermore, note that the number of peaks before the $x-$axis is hit for the respective processes is the same. The dashed line then corresponds to the workload of a D/M/1 queue. }
    \label{fig:Queueingduality}
\end{figure}

\begin{lemma}[Queuing duality]
\label{lem:Q=N}
Consider a $D/M/1$ queue where customers arrive (deterministically) every 1 time unit and service requirements are exponentially distributed with mean $c$.
Let $Q_y$ be the number of arrivals within a busy period that started with the arrival of a customer at time $t=0$ and with  an initial workload of $y$. Then $N_y\overset{d}{=}Q_y$.
\end{lemma}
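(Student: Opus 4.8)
The plan is to avoid any heavy machinery and simply give an explicit sample-path description of the $D/M/1$ busy period, from which the number of customers it contains is seen to admit exactly the representation of $N_y$ recorded in Theorem \ref{lem:T=N}. Concretely, I would model the workload process $W(\cdot)$ of the $D/M/1$ queue by setting $W(0^-)=y$, letting customer $k$ arrive at time $k-1$ (for $k=1,2,\ldots$) with an independent service requirement $S_k$ exponentially distributed with mean $c$, and letting $W$ decrease at unit rate between arrivals. Then $W$ is piecewise linear with slope $-1$, jumps up by $S_k$ at time $k-1$, the busy period ends at $\tau:=\inf\{t\ge 0:W(t)=0\}$, and $Q_y$ is by definition the number of arrival epochs lying in $[0,\tau)$.

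The crux is to read off $Q_y$ in closed form. Since $W$ is strictly decreasing on each inter-arrival interval, it attains a local minimum just before every integer time, so the busy period is still in progress at time $k^-$ precisely when $y+\sum_{i=1}^{k}S_i-k>0$; equivalently (by work conservation, total work in equals total work processed at time $\tau$) one has $\tau=y+\sum_{i=1}^{Q_y}S_i$. Either way, the busy period has exactly $k$ customers iff $y+\sum_{i=1}^{j}S_i\ge j$ for all $j<k$ while $y+\sum_{i=1}^{k}S_i<k$, i.e.
\[
Q_y=\min\Bigl\{k\ge 1:\ y+\sum_{i=1}^{k}S_i<k\Bigr\}.
\]
Writing $S_i=c A_i$ with $A_i$ i.i.d.\ standard exponential — legitimate because $S_i$ has mean $c$ — turns this into $Q_y=\inf\{k\ge1:\ y+c\sum_{i=1}^{k}A_i<k\}$, which is precisely the definition of $N_y$ in Theorem \ref{lem:T=N}. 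Hence $N_y\overset{d}{=}Q_y$; in fact under the coupling $S_i=cA_i$ the two are almost surely equal, which is the ``same number of peaks at the same heights'' picture of Figure \ref{fig:Queueingduality}: the height of the $k$-th peak of the Cram\'er-Lundberg process is $y+c\sum_{i\le k}A_i-(k-1)$, while the workload of the $D/M/1$ queue immediately after its $k$-th arrival is $y+\sum_{i\le k}S_i-(k-1)$, and the first index $k$ at which either quantity drops below $1$ is the same.

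I expect the only genuine obstacle to be bookkeeping rather than mathematics. One must fix, and state cleanly, the conventions for the customer arriving at $t=0$ (so that the initial load $y$ and its service $S_1$ are not conflated), for whether $Q_y$ counts inclusively the customer whose arrival precedes the emptying of the queue, and for the null event $\tau\in\mathbb{N}$; getting these to line up on the two sides is where the care lies, but none of it affects the distributional identity, and no limiting or heavy-traffic arguments are needed.
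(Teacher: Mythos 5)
Your proposal is correct and follows essentially the same route as the paper: both arguments identify $Q_y$ sample-pathwise with the stopping time $\inf\{k\geq 1: y+c\sum_{i=1}^k A_i<k\}$, which is by definition $N_y$, so the two coincide under the coupling $S_i=cA_i$ and in particular are equal in distribution. Your write-up merely spells out the workload-path bookkeeping that the paper's proof states in one line.
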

\begin{proof}
   Note that both $N_y$ and $Q_y$ are stopping times on stochastic processes driven by i.i.d. inter-arrival times $\{A_i\}_{i\geq1}$ and service times $\{cA_i\}_{i\geq1}$ respectively, where the random variables $A_i$ are exponentially distributed with mean 1.
In fact, both $N_y$ and $Q_y$ are defined by
   \begin{equation}
       N_y = Q_y =
       \inf\left\{k\geq 1: y+c\sum_{i=1}^k A_i< k\right\},
   \end{equation}
which implies that $N_y$ and $Q_y$ have the same distribution.\end{proof}

A relevant theorem, for the specific case $y\in\mathbb{N}$, for the tail distribution of the number of customers in the busy period of the dual $D/M/1$ can be found in the queueing literature. We present this in Theorem \ref{thm:Stadje}:
\begin{theorem}[Number of customers in a D/M/1 busy period]
\label{thm:Stadje}
Consider a D/M/1 queue where customers arrive at fixed time intervals $(a_i)_{i\geq 0}$ such that $0=a_0<a_1<...$. Furthermore, suppose the queue works at unit speed and has service rate $\mu$. If at the start of a busy period $m$ customers are present, then
\begin{equation}
    \prob[ Q_y>n ] =
    \sum_{k\in K_n(m)}
    \e^{-\mu a_n}
     \frac{ \mu^{k_0+...+k_{n-1}} }{k_0!\cdots k_{n-1}!}\prod_{i=0}^{n-1} (a_i-a_{i-1})^{k_i},
\end{equation}
where $K_n(m)=\{ \vec{k}\in \mathbb{N}^n \mid k_0+...+k_i\leq i+m\}$.
\end{theorem}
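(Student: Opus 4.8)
The plan is to prove Theorem~\ref{thm:Stadje} by a direct sample-path decomposition of the busy period. The key structural fact is that in a D/M/1 queue the arrival epochs $a_0=0<a_1<\cdots$ are deterministic, whereas the service completions — since the exponential service times are memoryless and the server runs at unit rate whenever it is nonempty — occur at the points of a homogeneous Poisson process of rate $\mu$. I would therefore introduce, on a common probability space, an ambient rate-$\mu$ Poisson process $P(\cdot)$ and observe that, as long as the queue started with $m$ customers at time $0$ has not emptied, its service completions coincide with the points of $P$; consequently the first epoch at which the queue empties is exactly the first epoch at which a certain count constraint on $P$ is violated. This reduces all statements about the (nonstationary) queue-length process to statements about the increments of $P$ over the fixed intervals $(a_i,a_{i+1}]$.

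Second, I would translate the event $\{Q_y>n\}$ — that the busy period has not terminated before the arrival epoch $a_n$ — into a finite family of constraints indexed by the earlier arrival epochs. The queue length increases by $1$ at each $a_j$ and decreases by $1$ at each point of $P$, hence it is nonincreasing between consecutive arrival epochs; so the busy period survives up to $a_n$ if and only if, just before each arrival epoch $a_{i+1}$ (for $i=0,\dots,n-1$), the number in system is still at least $1$. Writing $k_i:=P(a_{i+1})-P(a_i)$ for the number of service completions in $(a_i,a_{i+1}]$, the number present just before $a_{i+1}$ equals (customers present at time $0$) $+$ (arrivals so far) $-\,(k_0+\cdots+k_i)$, and the surviving condition becomes $k_0+\cdots+k_i\le i+m$ for every $i=0,\dots,n-1$ — precisely the defining condition of $K_n(m)$ (the exact value of the offset $m$ and the range of $i$ being dictated by how the customer arriving at time $0$ is counted).

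Third, the remaining computation is routine. By independence and the Poisson law of the increments of $P$ over the disjoint intervals $(a_i,a_{i+1}]$, for a fixed vector $\vec k=(k_0,\dots,k_{n-1})\in\mathbb{N}_0^n$,
\begin{align}
\prob\bigl(P(a_{i+1})-P(a_i)=k_i,\ i=0,\dots,n-1\bigr)
&=\prod_{i=0}^{n-1}\e^{-\mu(a_{i+1}-a_i)}\frac{\bigl(\mu(a_{i+1}-a_i)\bigr)^{k_i}}{k_i!}\nonumber\\
&=\e^{-\mu a_n}\,\frac{\mu^{\,k_0+\cdots+k_{n-1}}}{k_0!\cdots k_{n-1}!}\prod_{i=0}^{n-1}(a_{i+1}-a_i)^{k_i},
\end{align}
since $\sum_{i=0}^{n-1}(a_{i+1}-a_i)=a_n$. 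Summing over $\vec k\in K_n(m)$ and invoking the equivalence of the previous step yields the claimed expression for $\prob[Q_y>n]$; specialising the arrival epochs and the rate then recovers Proposition~\ref{prop:DM1BP} through Lemma~\ref{lem:Q=N}.

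The main obstacle I anticipate is bookkeeping rather than anything deep: pinning down the precise offset in $k_0+\cdots+k_i\le i+m$ and the exact range of $i$, both of which hinge on the convention for whether the customer arriving at $a_0=0$ is included among the $m$ initially present; and arguing cleanly — via the ambient-Poisson coupling — that the interior of each interval $(a_i,a_{i+1})$ imposes no further constraint and that the service completions genuinely form a Poisson process up to the random end of the busy period, so that one may compute with $P$ rather than with the actual departure process. Alternatively one could bypass the sample-path argument and cite the corresponding cycle/ballot-type result for D/M/1 busy periods from the queueing literature; the decomposition above is self-contained and parallels the proof of Proposition~\ref{prop:DM1BP}.
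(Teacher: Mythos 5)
Your argument is correct in substance, but it takes a genuinely different route from the paper: the paper does not prove Theorem \ref{thm:Stadje} at all --- its ``proof'' is a one-line citation to \cite[Section 2]{Stadje1995} --- whereas you give a self-contained sample-path proof. What you propose is essentially the paper's own direct proof of Proposition \ref{prop:DM1BP} (Section \ref{sec:prop2}) transplanted to the D/M/1 setting: there the Poisson structure of the down-jumps is given by assumption, while here you obtain it from memorylessness of the exponential services via the ambient-Poisson coupling, which is the one genuinely queueing-specific ingredient, and you handle it correctly (on the survival event the departures coincide with the points of $P$ on $[0,a_n)$, and conversely the counting constraints on $P$ force survival interval by interval, so the two events coincide under the coupling; the constraints only need to be checked just before each arrival epoch because the queue length is nonincreasing in between). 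The translation of $\{Q_y>n\}$ into the nested constraints and the product of independent Poisson increment probabilities are both right; note that your $\prod_{i=0}^{n-1}(a_{i+1}-a_i)^{k_i}$ is the reading consistent with the factor $\e^{-\mu a_n}$, whereas the statement's $\prod_{i=0}^{n-1}(a_i-a_{i-1})^{k_i}$ involves an undefined $a_{-1}$ and is an indexing slip, so your computation recovers the intended formula. The only loose end is the one you flag yourself: the convention for the customer arriving at $a_0=0$. To land on $K_n(m)$ (and to be consistent with how the same nested constraints produce the offset $m$ in Proposition \ref{prop:DM1BP}) the number of customers available just before $a_{i+1}$ must be $m+i+1$, i.e.\ the arrival stream at $a_0,a_1,\dots$ is counted on top of the $m$ initially present; once that bookkeeping is fixed your proof is complete. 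What your route buys is a short self-contained derivation in place of the external reference, mirroring the paper's treatment of Proposition \ref{prop:DM1BP}; what the paper's route buys is brevity and a pointer to the more general result in the queueing literature.
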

\begin{proof}
   See \cite[Section 2]{Stadje1995}.
\end{proof}

The proof of Proposition \ref{prop:DM1BP} is a slight adaptation of the result of Theorem \ref{thm:Stadje} where there is some additional  work $\varepsilon \in (0,1)$ in the system at time $0$ (or alternatively formulated, the first customer has some service time $\varepsilon+cA_1$, while the other customers have service times $cA_2, cA_3, \ldots$).

\section{Tightness}
\label{app:tightness}

We use the Arzel\'a-Ascoli characterisation, see for example \cite[13.2]{Bingham1987}. This characterisation states tightness for $x_n(t)\in D[0,\tau]$ follows from the following properties:
\begin{enumerate}
    \item Almost surely bounded:
    \begin{equation}
    \label{eq:AAcriterion as bounded}
        \lim_{M\to\infty }
        \limsup_{n\to\infty}
        \prob\Big(\sup_{t\in[0,\tau]} |x_n(t)|>M\Big)
        =0;
    \end{equation}
    \item Modulus of continuity: Let $\Pi_\tau(\delta)$ be the set of partitions $\vec{s}=(s_0,\ldots,s_k)$ with $k\in \mathbb{N}$, $s_0=0$ and $s_k=\tau$ of $[0,\tau]$ such that each sub-interval is of length at least $\delta$. Then
    for given $\varepsilon>0$
    \begin{equation}
    \label{eq:AAcriterion modulus c}
        \lim_{\delta\to 0}
        \limsup_{n\to\infty}
        \prob\Big( \inf_{\vec{s}\in \Pi_\tau(\delta)}
        \max_{i\geq 1}\sup_{s_{i-1}\leq t_1<t_2\leq s_{i}}|x_n(t_1)-x_n(t_2)|>\varepsilon\Big)=0.
    \end{equation}
\end{enumerate}
In the following we prove tightness based on Equation \eqref{eq:AAcriterion as bounded} and Equation \eqref{eq:AAcriterion modulus c}.

    Let $\tau$ be given. Then we start by showing that $Q^{(n)}_{\nu,\lambda,K}(t)/(nK)$ is almost surely bounded. Define a process 
    \begin{equation}
        R^{(n)}_{\nu}(t) = \sum^{m(n)+A_{\nu n}(t)}_{i=1}X_i,
    \end{equation}
    i.e. the BSQ where no departures take place. Then clearly for all $t\geq 0$, $Q^{(n)}_{\nu,\lambda,K}(t)/(nK)$ is stochastically dominated by $R^{(n)}_{\nu,\lambda,K}(t)/(nK)$. Fix $\varepsilon>0$ and $M>0$, then by \eqref{eq:fluid_initial_conditions} we find there exists a $\delta>0$ and $n_0>0$, such that by the Markov inequality for $n>n_0$
    \begin{equation}
       \prob\Big(
        \frac{1}{Kn}
        Q^{(n)}_{\nu,\lambda,K}(t)>M
        \Big)\leq 
        \prob\Big(
        \frac{1}{Kn}
        R^{(n)}_{\nu,\lambda,K}(t)>M
        \Big)
        \leq 
        \frac{Ky+\expec[X]\tau\nu}{KM}(1+\delta).
    \end{equation}
    Now setting $M>(1+\delta)(Ky+\nu\tau\expec[X])/(K\varepsilon)$ provides a uniform bound in $t$, thereby showing the first criterion for tightness of Equation \eqref{eq:AAcriterion as bounded}.\\
    \medskip
    We next show the second criterion. We aim to construct a partition $\vec{\pi}^*$ such that fluctuations within sub-intervals are small.
    Let $\varepsilon>0$ be given, then for $\delta>0$ to be specified later, we construct $\vec{\pi}^*$ as follows:
    we put partition bounds on all block mining events (which we recall coincide with batch service points), then within these partition bounds we put sub-intervals of length at exactly $\delta$ if possible. This means that between consecutive block minings, there are sub-intervals of length exactly $\delta$ and one sub-interval that is at most $2\delta$. 
    We need to show that $\vec{\pi}^*\in\Pi_\tau(\delta)$ and that Equation \eqref{eq:AAcriterion modulus c} holds, which we do as follows.

    \paragraph{$\vec{\pi}^*$ is an appropriate partition:}
    It is sufficient to show that the time between consecutive blocks is more than $\delta$ w.h.p. We use that for a Poisson process, conditioned on the number of arrivals in an interval, the arrivals occur uniformly within that interval. Let $\eta>0$ be given and
 take $\delta<\delta_1=\eta/(2\lambda\tau)$, then
    \begin{equation}
    \label{eq:pi*in Pi delta}
    \begin{aligned}
        \prob(\vec{\pi}^* \not\in \Pi_\tau(\delta)) &=
        \sum_{i=0}^\infty 
        \prob(\vec{\pi}^* \not\in \Pi_\tau(\delta)\mid D_\lambda(\tau) = i)\prob(D_\lambda(\tau) = i)\\&=
        \sum_{i=0}^\infty \prob(\text{$\exists$ a pair of uniforms closer than $\delta$ in $[0,\tau]$}\mid D_\lambda(\tau) = i)\prob(D_\lambda(\tau) = i)\\&
        \leq
         \sum_{i=0}^\infty
         (1-(1-2i\delta)) \prob(D_\lambda(\tau) = i)
         \\&=2\delta\lambda\tau 
         <\eta.
    \end{aligned}
    \end{equation}
 \paragraph{No large fluctuations within partition intervals}   
 Fix $t\in[0,\tau]$, then we first consider fluctuations in $[t,t+\delta]$. 
  Based on the construction of $\vec{\pi}^*$ there is no block arrival in this interval, so that it suffices to show that for given $\varepsilon>0$, we can take $\delta$ small enough such that 
 \begin{equation}
 \label{eq:AA modulus as bounded}
     \prob\Big(\frac{1}{Kn}\sum_{i=1}^{A_{\nu n}(\delta)}
     X_i>\varepsilon\Big)\to 0.
 \end{equation}
 However, the exact speed at which this probability converges to 0 is of importance.
 Let $\delta<\delta_2=K\varepsilon/\expec[X]$and $0<\xi < K\varepsilon/(\nu\expec[X])-\delta $. Then
conditioning on $A_{\nu n}(\delta)$ being large, we find 
 \begin{equation}
 \begin{aligned}
     \prob\Big(\frac{1}{Kn}\sum_{i=1}^{A_{\nu n}(\delta)}
     X_i>\varepsilon\Big)
    &\leq \prob\Big(\frac{1}{Kn}\sum_{i=1}^{n\nu(\delta  + \xi)}
     X_i>\varepsilon\Big)+
     \prob(A_{\nu n}(\delta) >n\nu(\delta  + \xi)).
\end{aligned}
\end{equation}
Set $\sigma^2= \var[X]$, then
by the central limit theorem for the first term, we find 
\begin{equation}
\begin{aligned}
\prob\Big(\frac{1}{Kn}\sum_{i=1}^{n\nu(\delta  + \xi)}
     X_i>\varepsilon\Big) &= \prob\bigg(\frac{\sum_{i=1}^{n\nu(\delta+\xi)}
     X_i - n\nu(\delta+\xi)\expec[X]}{\sqrt{2\pi\sigma^2n\nu(\delta+\xi)}}>\frac{\varepsilon Kn -n\nu(\delta+\xi)\expec[X]}{\sqrt{2\pi\sigma^2n\nu(\delta+\xi)}} \bigg)
    \\&=\prob\bigg(N(0,1) > \sqrt{n}\frac{\varepsilon K - \nu(\delta+\xi)\expec[X]}{\sqrt{2\pi\sigma ^2\nu(\delta+\xi)}}
    \bigg)(1+o(1))
    \\&=
    C\exp\bigg\{-n\Big(\frac{\varepsilon K - \nu(\delta+\xi)\expec[X]}{\sqrt{2\pi\sigma ^2\nu(\delta+\xi)}}
    \Big)^2\bigg\}(1+o(1)).
\end{aligned}
 \end{equation}
Via a similar argument, we find also
\begin{equation}
    \prob(A_{\nu n}(\delta) >n\nu(\delta  + \xi))
    = C\exp\bigg\{-\nu n\Big(\frac{\xi}{\sqrt{2\pi\delta}}\Big)^2\bigg\}(1+o(1)).
\end{equation}
This implies that 
\begin{equation}
    \prob\Big(\frac{1}{Kn}\sum_{i=1}^{A_{\nu n}(\delta)}
     X_i>\varepsilon\Big)
     \leq
     2C\e^{-c(\delta)n}(1+o(1)),
\end{equation}
where we recall that $c(\delta)>0$ for $\delta<K\varepsilon/(\nu\expec[X])$.

\paragraph{Combining the results:}
Based on Equation \eqref{eq:pi*in Pi delta} and based on the fact that $(nK)^{-1}Q^{(n)}_{\nu,\lambda,K}(t)$ is increasing within sub-intervals of $\vec{\pi}^*=(s^*_i)_{i\geq 1}$.
We find for $\delta<\min\{\delta_1,\delta_2\}$
\begin{equation}
    \begin{aligned}
       &  \prob\Big( \inf_{\vec{s}\in \Pi_\tau(\delta)}
        \max_{i\geq 1}\sup_{s_{i-1}\leq t_1<t_2\leq s_i}|(nK)^{-1}Q^{(n)}_{\nu,\lambda,K}(t_1)-(nK)^{-1}Q^{(n)}_{\nu,\lambda,K}(t_2)|>\varepsilon\Big)
       \\&\qquad \leq 
        \varepsilon +\prob\Big(\max_{i\geq 1}
        |(nK)^{-1}Q^{(n)}_{\nu,\lambda,K}(s^*_{i-1})-(nK)^{-1}Q^{(n)}_{\nu,\lambda,K}(s^*_i)|>\varepsilon\Big)
        \\&\qquad\leq 
        \varepsilon +(1-(1-2C\e^{-c(\delta)n })^{1/\delta})(1+o(1)).
    \end{aligned}
\end{equation}
    Taking $n\to \infty$ followed by $\delta \to 0$ shows that Equation \eqref{eq:AAcriterion modulus c} is satisfied.
\end{appendices}

\end{document}